\newtheorem{thm}[equation]{Theorem}
\newtheorem{cor}[equation]{Corollary}
\newtheorem{lem}[equation]{Lemma}
\newtheorem{prop}[equation]{Proposition}
\theoremstyle{definition}
\numberwithin{equation}{section}
\newcommand{\ann}{\hbox{\rm Ann}}
\definecolor{mjo}{rgb}{0,0,.9}
\newcommand{\cc}{\mathbb{C}}
\newcommand{\z}{\mathbb{Z}}
\newcommand{\n}{\mathfrak{n}}
\newcommand{\h}{\mathfrak{h}}
\newcommand{\g}{\mathfrak{g}}
\newcommand{\p}{\mathcal{P}}
\newcommand{\W}{\mathcal{W}}
\definecolor{mjo}{rgb}{.4,0,.9}
\newcommand{\Hom}{\mbox{Hom}}
\begin{document}

\title{Whittaker Categories for the Virasoro Algebra}

\author{Matthew Ondrus}
\address{Mathematics Department \\ Weber State University \\ Ogden, Utah  84408-1702}
\email{mattondrus@weber.edu}
\author{Emilie Wiesner}
\address{Department of Mathematics \\
           Ithaca College\\ Ithaca, New York 14850  }
\email{ewiesner@ithaca.edu}


\maketitle

\begin{abstract}
This paper builds on work from \cite{OW2008}, where the authors described Whittaker modules for the Virasoro algebra.  Using the framework outlined in \cite{BatMaz}, the current paper investigates a category of Virasoro-algebra modules that includes Whittaker modules.  Results in this paper include a classification of the simple modules in the category and a description of certain induced modules that are a natural generalization of simple Whittaker modules.  
\end{abstract}

\section{Introduction}

Whittaker modules were first defined for $\mathfrak{sl}_2(\cc)$ by Arnal and Pinzcon \cite{AP}.  Motivated by the Whittaker equations in number theory, Kostant \cite{Ko78} later defined Whittaker modules for all finite-dimensional complex semisimple Lie algebras $\g$.  Among other results on Whittaker modules,  Kostant classified the Whittaker modules of $\g$, demonstrating a strong connection with the center of $U(\g)$.

Kostant's definition of Whittaker modules is closely tied to the triangular decomposition of a finite-dimensional complex semisimple Lie algebra: $\g= \n^- \oplus \h \oplus \n^+$ ; each Whittaker module depends upon a fixed nonsingular Lie algebra homomorphism $\psi : \n^+ \to \cc$.  Results for complex semisimple Lie algebras have been extended to other algebras with similar structures.  These include quantum groups, by Sevoystanov \cite{Se00} for $U_h(\g)$ and by Ondrus \cite{On05} for $U_q(\mathfrak{sl}_2)$.  Whittaker modules have also been studied for generalized Weyl algebras  by Benkart and Ondrus \cite{BO08} and in connection to non-twisted affine Lie algebras by Christodoupoulou \cite{Ch08}.  In \cite{OW2008}, the present authors studied Whittaker modules for the Virasoro algebra; analogous results in similar settings have been worked out in \cite{Wa10} and \cite{Li10}. 

Kostant's definition of Whittaker modules leads to a category that is not abelian, among other categorical limitations.  However, Whittaker modules do exist naturally inside of larger, better-behaved categories; we refer to these categories as Whittaker categories.   Work toward understanding these larger categories in the setting of complex semisimple Lie algebras was begun by McDowell \cite{M85}, Mili{\v{c}}i{\'c} and Soergel \cite{MS97}, and Backelin \cite{Ba97}.   Batra and Mazorchuk \cite{BatMaz} later generalized the ideas of both Whittaker modules and the underlying categories to a broad class of Lie algebras.  Their framework allows for a unified explanation of important results but is also limited by its generality.  In particular, a lack of specific knowledge about the center of $U(\g)$ appears to contribute to the difficulty in understanding some aspects of Whittaker categories in general.
 
In this paper, we make use of the framework created by Batra and Mazorchuk to further explore Whittaker categories in the setting of the Virasoro algebra.  The Virasoro algebra has been widely studied due in part to its interesting representation theory and its role in mathematical physics.  Knowledge of the center of the universal enveloping algebra of the Virasoro algebra also allows for a more detailed study of Whittaker categories in this setting.  

Using the triangular decomposition $\g=\n^- \oplus \h \oplus \n^+$ of the Virasoro algebra, we investigate the category $\W$ of all $\g$-modules on which $\n^+$ acts locally finitely.  Section \ref{sec:WhittakerModules} contains a review of relevant definitions and results from \cite{OW2008} regarding Whittaker modules for the Virasoro algebra.  In Section \ref{sec:categoryW}, we prove several general results about the category $\W$.   Theorem \ref{thm:quotU+LocFin} shows that the category $\W$ is a Serre subcategory of $\g$-Mod, and Proposition \ref{prop:simplePsiNot0} and Theorem \ref{lem:simplesInN(0)} demonstrate that the simple modules in $\W$ are exactly the simple lowest weight modules described in \cite{MP95} and the simple Whittaker modules of \cite{OW2008}.

In Section \ref{sec:categoryWf}, we study the subcategory $\W_f$ of $\W$ containing modules with finite composition length.  We give a decomposition of $\W_f$ according to the action of the center $Z$ of $U(\g)$ (Proposition \ref{prop:zaction}) as well as a partial description of homomorphisms between modules in $\W_f$.  In light of the decomposition given by the $Z$-action, it is natural to study modules in $\W_f$ on which $Z$ acts by scalars.  We prove several general facts related to the Whittaker vectors of these modules and also give a construction that appears to yield a significant subset of these modules.  In Theorem \ref{thm:indecompSupDiag}, we show that under certain conditions this construction produces modules of arbitrary composition length with a 1-dimensional space of Whittaker vectors.  This cannot happen in the setting of complex semisimple finite-dimensional Lie algebras (see Theorem 4.3 of \cite{Ko78}).

\section{Whittaker modules for the Virasoro algebra} \label{sec:WhittakerModules}

Let $\g$ denote the Virasoro Lie algebra.  Then $\g$ has a $\cc$-basis $\{z, d_k \mid k \in \z \}$
with Lie bracket 
\begin{align*}
[d_k, d_j] &= (k-j)d_{k+j} + \delta_{j,-k} \frac{k^3-k}{12} z; \\
[z, d_k] &= 0.
\end{align*}
Let $U = U(\g)$ denote the universal enveloping algebra of $\g$ and $Z = Z(\g)$ the center of $U(\g)$.  Note that $Z(\g) \cong \cc [z]$. (This follows, for example, from  \cite[Corollary 5.2]{OW2008}).

The Virasoro algebra has a triangular decomposition (in the sense of \cite{MP95}):
$$
\g=\n^-\oplus \h \oplus \n^+ 
$$
where $\n^{\pm}={\rm span}_\cc \{d_{\pm k} \mid k \in \z_{>0} \}$ and $\h={\rm span}_\cc \{ d_0, z\}$.  
For a given $\psi \in (\n^+/[\n^+,\n^+])^*$, define the 1-dimensional $\n^+$-module $\cc_\psi$ by $x.c=\psi(x)c$ for $x \in \n^+$ and $c \in \cc$.  As Lemma  \ref{lem:quasiNil_1dim} shows, every finite-dimensional simple $\n^+$-module has this form; the result is a consequence of Lie's Theorem and is proven for all quasi-nilpotent Lie algebras in \cite{BatMaz}.

\begin{lem}[\cite{BatMaz}]\label{lem:quasiNil_1dim}
Let $L$ be a simple, finite-dimensional $\n^+$-module.  Then $L \cong \cc_{\psi}$ some $\psi \in (\n^+/[\n^+,\n^+])^*$.
\end{lem}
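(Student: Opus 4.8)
The plan is to exhibit $L$ as a one-dimensional module by locating a common eigenvector for the action of all of $\n^+$ and using simplicity to conclude $\dim L = 1$, then reading off that the resulting character kills $[\n^+,\n^+]$.

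First I would show that $\n^+$ acts on $L$ by operators whose images in $\mathfrak{gl}(L)$ generate a finite-dimensional \emph{solvable} (indeed quasi-nilpotent) Lie algebra. The key observation is that $\n^+$ is generated by $d_1$ and $d_2$ via the bracket relations $[d_i, d_j] = (i-j) d_{i+j}$ (with no central term, since $i+j > 0$), so the image $\bar{\n}^+$ of $\n^+$ in $\mathfrak{gl}(L)$ is a finitely generated Lie algebra acting on a finite-dimensional space; its derived series descends through the subalgebras spanned by $d_k$ for $k \geq N$, and each such subalgebra acts nilpotently in a suitable sense. The cleanest route is simply to invoke that $\n^+$ is quasi-nilpotent in the sense of \cite{BatMaz} — every element of $\bar{\n}^+$ acts locally nilpotently, or at least $\bar{\n}^+$ is nilpotent — so $\bar{\n}^+$ is in particular a finite-dimensional solvable Lie subalgebra of $\mathfrak{gl}(L)$.

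Next I would apply Lie's Theorem to the solvable Lie algebra $\bar{\n}^+ \subseteq \mathfrak{gl}(L)$ over $\cc$: there is a common eigenvector $v \in L$, so $x.v = \psi(x) v$ for all $x \in \n^+$, where $\psi : \n^+ \to \cc$ is linear. The span $\cc v$ is then an $\n^+$-submodule of $L$, and since $L$ is simple and nonzero we get $L = \cc v$, hence $\dim L = 1$. It remains to check $\psi$ factors through $\n^+/[\n^+,\n^+]$: since $L$ is one-dimensional, every commutator $[x,y]$ acts as $\psi(x)\psi(y) - \psi(y)\psi(x) = 0$, so $\psi$ vanishes on $[\n^+,\n^+]$ and descends to $(\n^+/[\n^+,\n^+])^*$. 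This gives $L \cong \cc_\psi$.

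The main obstacle is the first step: justifying that the image of $\n^+$ in $\mathfrak{gl}(L)$ is solvable (so that Lie's Theorem applies), since $\n^+$ itself is infinite-dimensional and not nilpotent. The resolution is that only the finite-dimensional quotient acting faithfully on $L$ matters, and the grading by positive integers forces that quotient to be nilpotent — concretely, for $v \in L$ the elements $d_k.v$ with $k$ large enough must eventually be expressible via lower brackets, and a dimension count on $L$ bounds how far the grading can persist nontrivially. Since this is precisely the content of the quasi-nilpotency argument carried out in full generality in \cite{BatMaz}, I would cite that rather than reproduce it, and focus the proof on the Lie's Theorem application and the final character computation.
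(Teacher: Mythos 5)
Your proposal is correct and follows essentially the same route as the paper: both defer the one substantive point --- that the image of $\n^+$ in $\mathfrak{gl}(L)$ is a finite-dimensional solvable (indeed nilpotent) Lie algebra, which is exactly the quasi-nilpotency argument --- to \cite{BatMaz}, and then finish by applying Lie's Theorem to get a common eigenvector, invoking simplicity to conclude $\dim L = 1$, and noting that the resulting character must vanish on $[\n^+,\n^+]$. (One small slip worth correcting: for $\psi \neq 0$ the elements of $\n^+$ do \emph{not} act locally nilpotently on $\cc_\psi$, since $d_1$ acts by the nonzero scalar $\psi(d_1)$; but your argument only uses solvability of the image, so this does not affect the conclusion.)
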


A vector $w \in V$ is a {\it Whittaker vector} if there exists $\psi \in (\n^+/[\n^+,\n^+])^*$ such that $xw = \psi(x) w$ for all $x \in \n^+$.  A $\g$-module $V$ is a {\it Whittaker module} if there is a Whittaker vector $w \in V$ that generates $V$. In \cite{OW2008}, the authors give a characterization of Whittaker modules where $\psi \neq 0$. (We address related results for $\psi=0$ in this paper.) We repeat two results from \cite{OW2008} that will be central to the ideas described here.

For $\psi \in (\n^+/[\n^+, \n^+] )^*$ and $\xi \in \cc$, define $\cc_{\psi, \xi}$ to be the one-dimensional $\n^+ \oplus \cc z$-module on which $\n^+$ acts by $\psi$ and $z$ acts by $\xi$.  Then define the $\g$-module
$$
L(\psi, \xi) = {\rm Ind}_{\n^+ \oplus \cc z}^{\g} \cc_{\psi, \xi} = U(\g) \otimes_{U(\n^+ \oplus \cc z)} \cc_{\psi, \xi}.
$$

\begin{prop}[\cite{OW2008}] \label{cor:SimpleL_PsiXi} 
For any $0 \neq \psi \in (\n^+/[\n^+, \n^+] )^*$ and $\xi \in \cc$, the module $L(\psi, \xi)$ is simple.  Moreover, any Whittaker module where $z$ acts by a scalar is isomorphic to some $L(\psi,\xi)$, and $L(\psi,\xi) \cong L(\psi', \xi')$ if and only if $(\psi,\xi) = (\psi', \xi')$.
\end{prop}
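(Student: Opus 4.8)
\smallskip
\noindent\emph{Proof proposal.} The plan is to reduce the whole statement to one computation: \emph{the space of Whittaker vectors of $L(\psi,\xi)$ is exactly $\cc w$}, where $w=1\otimes 1$. The structural facts are immediate: by the PBW theorem $L(\psi,\xi)$ has $\cc$-basis $\{\,d_{-m}^{a_m}\cdots d_{-1}^{a_1}d_0^{a_0}w\,\}$ (over finitely supported tuples of nonnegative integers), $w$ generates $L(\psi,\xi)$ and is a Whittaker vector of type $\psi$ with $zw=\xi w$, and a short computation shows $\n^+$ acts locally finitely on $L(\psi,\xi)$. Granting the italicized claim, simplicity follows at once: given a nonzero submodule $V$ and $0\neq v\in V$, the space $U(\n^+)v$ is finite-dimensional and nonzero, hence contains a simple $\n^+$-submodule, which by Lemma \ref{lem:quasiNil_1dim} is spanned by a Whittaker vector; this Whittaker vector lies in $V\cap\cc w$, so $w\in V$ and $V=L(\psi,\xi)$.

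To establish the claim I would filter $L(\psi,\xi)$ by the degree $\mathrm{wt}(d_{-m}^{a_m}\cdots d_{-1}^{a_1}d_0^{a_0}w)=\sum_{i\ge 1}i\,a_i$, refined lexicographically by the $d_0$-degree $a_0$, and call the maximal such pair occurring among the PBW monomials of a nonzero $v$ its \emph{leading data}. Commuting $d_k$ ($k>0$) to the right through a PBW monomial and using $d_kw=\psi(d_k)w$ (note $\psi(d_k)=0$ for $k\ge 3$, since $\psi$ factors through $\n^+/[\n^+,\n^+]$, spanned by the images of $d_1,d_2$), one checks that $d_kv=\psi(d_k)v+v'$ where every PBW monomial of $v'$ has leading data strictly below that of $v$: passing $d_k$ across a negative generator strictly lowers $\mathrm{wt}$, while passing it across $d_0^{a_0}$ keeps $\mathrm{wt}$ fixed but strictly lowers the $d_0$-degree. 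Hence $(d_k-\psi(d_k))v$ has leading data strictly below that of $v$ unless it is zero. Two consequences follow: first, every Whittaker vector of $L(\psi,\xi)$ is a $\psi$-Whittaker vector (if $v\ne 0$ were a $\psi'$-Whittaker vector with $\psi'(d_k)\ne\psi(d_k)$ for some $k\in\{1,2\}$, then $(\psi'(d_k)-\psi(d_k))v=(d_k-\psi(d_k))v$ would have strictly smaller leading data than $v$, a contradiction); second, one is reduced to showing a $\psi$-Whittaker vector $v$ has leading data $(0,0)$, i.e.\ $v\in\cc w$.

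This last point is the main obstacle, since for a Whittaker vector the relation $(d_k-\psi(d_k))v=0$ makes the ``strictly smaller'' conclusion vacuous, so the lower-order terms must be controlled directly. The plan is to induct on the top weight $n=\mathrm{wt}(v)$: comparing the weight-$n$ components of $d_1v=\psi(d_1)v$ and $d_2v=\psi(d_2)v$ should force the weight-$n$ part of $v$ to contain no $d_0$'s (using that at least one of $\psi(d_1),\psi(d_2)$ is nonzero because $\psi\ne 0$); then comparing the lower-weight components of the equations $d_kv=\psi(d_k)v$, $k\ge 1$ — and here it is essential to keep track of the central terms $\tfrac{k^3-k}{12}z$ in $[d_k,d_{-k}]$, which act by the fixed scalar $\xi$ — should over-determine the remaining components and force $n=0$. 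Once the claim is proved, the ``moreover'' part is formal. If $M$ is a Whittaker module on which $z$ acts by $\xi$, with Whittaker generator $m$ of type $\psi\ne 0$, then by Frobenius reciprocity $\Hom_\g\big(L(\psi,\xi),M\big)$ is the space of $(\psi,\xi)$-Whittaker vectors of $M$, which contains $m\ne 0$; the resulting nonzero map $L(\psi,\xi)\to M$ is surjective since $m$ generates $M$, and is an isomorphism because $L(\psi,\xi)$ is simple. Finally, an isomorphism $L(\psi,\xi)\xrightarrow{\ \sim\ }L(\psi',\xi')$ must identify the scalars by which $z$ acts, forcing $\xi=\xi'$, and carries $w$ to a Whittaker vector of $L(\psi',\xi')$, which by the claim is of type $\psi'$; hence $\psi=\psi'$.
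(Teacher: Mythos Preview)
The paper does not give its own proof of this proposition; it is quoted from \cite{OW2008}. I will therefore assess your sketch on its own merits, using the hints about the argument in \cite{OW2008} that appear later in this paper (notably in the proof of Lemma~\ref{lem:computationInSimple}).

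Your reduction to the claim ${\rm Wh}_\psi(L(\psi,\xi))=\cc w$ is the right one, and everything you deduce from it---simplicity, the classification of Whittaker modules with scalar $z$-action via Frobenius reciprocity, and the isomorphism criterion---is correct. The leading-data filtration also works as stated: on a PBW monomial $d_{-\mu}d_0^aw$ the weight-$|\mu|$ part of $d_k\,d_{-\mu}d_0^aw$ is exactly $\psi_k\,d_{-\mu}(d_0+k)^aw$, so $(d_k-\psi_k)$ strictly lowers the pair $(|\mu|,a)$ in lexicographic order. This yields both that every Whittaker vector has type $\psi$ and (taking $k$ with $\psi_k\neq 0$) that the top-weight piece of a $\psi$-Whittaker vector carries no $d_0$'s.

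The genuine gap is exactly the step you flag as ``the main obstacle.'' Saying the lower-weight equations ``should over-determine'' the coefficients is not yet an argument, and the emphasis on the central terms $\tfrac{k^3-k}{12}z$ is a red herring: $z$ acts by the single scalar $\xi$ everywhere, so those terms only shift constants and cannot separate PBW monomials. What is missing is a \emph{targeted} operator. The device from \cite{OW2008}, invoked in the proof of Lemma~\ref{lem:computationInSimple} here, is: once the top-weight part is $\sum_{|\mu|=n}a_\mu d_{-\mu}w$ with some $a_\mu\neq 0$, set $m=\max\{i:\psi_i\neq 0\}\in\{1,2\}$, pick a contributing $\mu$ with minimal first part $\mu_1$, and apply $d_{\mu_1+m}$. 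Since $\mu_1+m>m$ we have $\psi_{\mu_1+m}=0$, so the Whittaker condition forces $d_{\mu_1+m}v=0$; on the other hand $[d_{\mu_1+m},d_{-\mu_1}]=(2\mu_1+m)d_m$ together with $\psi_m\neq 0$ produces a nonzero contribution that one then shows cannot be cancelled. That non-cancellation analysis---not a generic ``over-determination''---is the substance of the proof, and your sketch does not supply it.
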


For a $\g$-module $V$, we let $\ell (V)$ denote the composition length (possibly $\infty$) of $V$.

\begin{prop}[\cite{OW2008}] \label{thm:generalV}
Let $0 \neq \psi \in (\n^+/[\n^+, \n^+] )^*$, and let $V$ be a Whittaker module of type $\psi$ with cyclic Whittaker vector $w$.  Suppose that $\ann_{Z}(w) \neq 0$.  Let $p(z)$ be the unique monic generator of the ideal $\ann_{Z}(w)$ in $Z$, and write $p(z) = \prod_{i=1}^k (z- \xi_i1)^{a_i}$ for distinct $\xi_1, \ldots, \xi_k \in \cc$.   Then $V$ decomposes into a direct sum of Whittaker modules
$$
V= \oplus_{i=1}^k V_i
$$
where $\ell (V_i)= a_i$ and the composition factors of $V_i$ are all isomorphic to $L(\psi, \xi_i)$.
\end{prop}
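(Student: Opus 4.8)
\medskip

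The plan is to exploit the centrality of $z$ to pass to the primary decomposition of $V$ over $Z \cong \cc[z]$, and then to analyze each primary summand by induction using Proposition~\ref{cor:SimpleL_PsiXi}.

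First I would note that $p(z)$ annihilates all of $V$, not merely $w$: since $w$ generates $V$ and $p(z)\in Z$, we get $p(z)V = p(z)Uw = U p(z) w = 0$. Hence $V$ is a module over $Z/(p(z)) \cong \prod_{i=1}^k \cc[z]/\bigl((z-\xi_i)^{a_i}\bigr)$, and the corresponding orthogonal idempotents, each acting on $V$ as a polynomial in $z$, yield a decomposition $V = \bigoplus_{i=1}^k V_i$ into $\g$-submodules $V_i = e_i V = \{\, v \in V : (z-\xi_i)^{a_i} v = 0 \,\}$. Setting $w_i = e_i w$, each $w_i$ is a Whittaker vector of type $\psi$ because $e_i$ commutes with $\n^+$ (it acts as a polynomial in the central element $z$), and $V_i = e_i V = e_i U w = U w_i$, so each $V_i$ is a Whittaker module of type $\psi$ with cyclic Whittaker vector $w_i$. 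A short unique-factorization argument then pins down $\ann_Z(w_i)$ exactly: $(z-\xi_i)^{a_i}$ kills $V_i\ni w_i$, so $\ann_Z(w_i) = \bigl((z-\xi_i)^{b_i}\bigr)$ with $b_i \le a_i$; but then $\prod_i (z-\xi_i)^{b_i}$ kills every $w_j$, hence kills $w = \sum_j w_j$, so it lies in $\ann_Z(w) = (p(z))$, forcing $b_i = a_i$ for all $i$. In particular every $V_i$ is nonzero.

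It then suffices to prove the following claim: if $N$ is a Whittaker module of type $\psi \neq 0$ with cyclic Whittaker vector $u$ such that $\ann_Z(u) = \bigl((z-\xi)^a\bigr)$, then $\ell(N) = a$ and every composition factor of $N$ is isomorphic to $L(\psi,\xi)$; applying this with $N = V_i$, $u = w_i$, $\xi = \xi_i$, $a = a_i$ finishes the proof. I would argue by induction on $a$. When $a = 1$ we have $(z-\xi)N = U(z-\xi)u = 0$, so $z$ acts on $N$ by the scalar $\xi$, and Proposition~\ref{cor:SimpleL_PsiXi} (which in particular shows that any Whittaker module on which $z$ acts by a scalar is simple) gives $N \cong L(\psi,\xi)$, whence $\ell(N)=1$. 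When $a \ge 2$, consider the $\g$-submodule $N' = (z-\xi)N = U(z-\xi)u$: here $(z-\xi)u\neq 0$ since $(z-\xi)\notin\ann_Z(u)$, it is a Whittaker vector of type $\psi$, and $\ann_Z\bigl((z-\xi)u\bigr) = \bigl((z-\xi)^{a-1}\bigr)$, so the inductive hypothesis gives $\ell(N') = a-1$ with all composition factors $L(\psi,\xi)$. Finally $N/N'$ is a Whittaker module on which $z$ acts by $\xi$, and it is nonzero because $N = (z-\xi)N$ would force $N = (z-\xi)^a N = 0$; hence $N/N' \cong L(\psi,\xi)$ is simple and $\ell(N) = \ell(N') + 1 = a$, with every composition factor isomorphic to $L(\psi,\xi)$.

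I expect the main difficulty to be bookkeeping rather than anything conceptual: one must check carefully that $w_i$ genuinely generates $V_i$, that $\ann_Z(w_i)$ is exactly $\bigl((z-\xi_i)^{a_i}\bigr)$ and not a proper divisor, and that at each stage of the chain $N \supsetneq (z-\xi)N \supsetneq (z-\xi)^2 N \supsetneq \cdots$ the successive quotient is nonzero so that Proposition~\ref{cor:SimpleL_PsiXi} applies. The two facts carrying the argument are the centrality of $z$ --- which makes every subspace in sight a $\g$-submodule and lets the idempotents $e_i$ commute with $\n^+$ --- together with the rigidity supplied by Proposition~\ref{cor:SimpleL_PsiXi}.

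\medskip
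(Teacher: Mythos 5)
Your argument is correct and is essentially the proof given in the cited source \cite{OW2008}: decompose $V$ via the Chinese Remainder Theorem applied to $Z/(p(z))$, using that the resulting idempotents are central so each $V_i = Uw_i$ is again a Whittaker module with $\ann_Z(w_i) = \bigl((z-\xi_i)^{a_i}\bigr)$, and then induct on $a_i$ through the filtration by $(z-\xi_i)^jV_i$, invoking Proposition~\ref{cor:SimpleL_PsiXi} to identify each subquotient with the simple module $L(\psi,\xi_i)$. No gaps.
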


\section{The category $\W$} \label{sec:categoryW}
Batra and Mazorchuk \cite{BatMaz} have generalized the idea of Whittaker modules to a larger category.  Adopting their definition, we define the {\it Whittaker Category} $\W$ to be the full subcategory of $\g$-Mod containing $\g$-modules on which the action of $\n^+$ is locally finite.  

Observe that $\W$ is an abelian category.  Here we also show that this category is closed under taking extensions; thus it forms a Serre subcategory of $\g$-Mod. Theorem \ref{thm:quotU+LocFin} is similar to \cite[Proposition 1]{BatMaz}.  However, the proof in \cite{BatMaz} relies on the assumption that $\n^+$ is finite-dimensional and thus $U(\n^+)$ is left Noetherian. 

\begin{thm}\label{thm:quotU+LocFin}
Suppose that $0 \to X \to Y \to Z \to 0$ is an exact sequence with $X, Y, Z \in {\rm Ob}( \mathfrak g\text{-${\rm Mod}$})$.   If $X, Z \in {\rm Ob}(\W)$, then $Y \in {\rm Ob}(\W)$.   
\end{thm}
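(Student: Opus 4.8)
The plan is to circumvent the Noetherian hypothesis used in \cite{BatMaz} by exploiting a structural feature of $\n^+$: although $U(\n^+)$ is not left Noetherian, $\n^+$ is finitely generated as a Lie algebra. Indeed, from $[d_1,d_k]=(1-k)d_{k+1}$ one sees by induction that every $d_k$ lies in the Lie subalgebra generated by $d_1$ and $d_2$, so $\{d_1,d_2\}$ generates $\n^+$ and hence generates $U(\n^+)$ as an associative algebra. The consequence I will use is: \emph{if $N$ is a subspace of a $\g$-module with $d_1N\subseteq N$ and $d_2N\subseteq N$, then $N$ is a $\n^+$-submodule} (every element of $U(\n^+)$ is a linear combination of words in $d_1,d_2$, each of which preserves $N$). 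Thus, to show $Y\in\W$ it suffices to check that each $v\in Y$ lies in some \emph{finite-dimensional} subspace $N\subseteq Y$ with $d_1N,d_2N\subseteq N$.

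To construct such an $N$, let $\pi\colon Y\to Z$ be the quotient map. If $\pi(v)=0$ then $v\in X$ and $U(\n^+)v$ is finite-dimensional because $X\in\W$, so assume $\pi(v)\neq 0$. Since $Z\in\W$, the submodule $U(\n^+)\pi(v)$ is finite-dimensional; fix a basis $b_1=\pi(v),b_2,\dots,b_m$ of it and elements $u_1=1,u_2,\dots,u_m\in U(\n^+)$ with $u_i\pi(v)=b_i$, and put $\beta_i:=u_iv\in Y$, so that $\pi(\beta_i)=b_i$ and $\beta_1=v$. For $j\in\{1,2\}$ write $d_jb_i=\sum_k c^{j}_{ik}b_k$; since $\pi$ is $\g$-equivariant, $\xi^{j}_i:=d_j\beta_i-\sum_k c^{j}_{ik}\beta_k$ lies in $\ker\pi=X$. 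As $X\in\W$, the finitely many $\xi^{j}_i$ generate a finite-dimensional $\n^+$-submodule $\widehat X_0\subseteq X$. Set $N:=\sum_i\cc\beta_i+\widehat X_0$. Then $N$ is finite-dimensional, $v=\beta_1\in N$, and for $j\in\{1,2\}$ we have $d_j\beta_i=\sum_k c^{j}_{ik}\beta_k+\xi^{j}_i\in N$ and $d_j\widehat X_0\subseteq\widehat X_0\subseteq N$, so $d_jN\subseteq N$. By the first paragraph $N$ is $\n^+$-stable, hence $U(\n^+)v\subseteq N$ is finite-dimensional, and since $v$ was arbitrary, $Y\in\W$.

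I do not expect any step to be genuinely hard; the point is exactly the obstruction flagged after the statement. In the finite-dimensional setting of \cite{BatMaz} one observes that $U(\n^+)v/\bigl(U(\n^+)v\cap X\bigr)\cong U(\n^+)\pi(v)$ is finite-dimensional and then invokes that a submodule of the finitely generated module $U(\n^+)v$ over the Noetherian ring $U(\n^+)$ is again finitely generated, hence finite-dimensional. That route is closed for the Virasoro algebra, so instead I build a finite-dimensional $\n^+$-stable subspace of $Y$ directly; finite generation of $\n^+$ as a Lie algebra is what reduces the stability check to the two generators $d_1,d_2$ and keeps the construction finite. The only thing requiring care is the verification that $d_1,d_2$ generate $\n^+$ as a Lie algebra, which is immediate from the bracket relations.
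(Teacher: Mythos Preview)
Your proof is correct and follows essentially the same approach as the paper: lift a basis of the finite-dimensional $U(\n^+)$-orbit in $Z$ to $Y$, collect the finitely many ``error terms'' in $X$ arising from the action of $d_1$ and $d_2$, and use local finiteness on $X$ together with the fact that $d_1,d_2$ generate $\n^+$ to produce a finite-dimensional $\n^+$-stable subspace containing the given vector. The only cosmetic difference is that you choose your lifts as $\beta_i=u_iv$ so that $v=\beta_1$ automatically, whereas the paper picks arbitrary lifts $y_1,\dots,y_n$ and must separately account for an extra element $x_0\in X$ with $y_0=x_0+\sum c_ky_k$.
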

We note that the proof of this result can be applied to any situation where $U(\n^+)$ is finitely generated.
\begin{proof}
To prove the claim, we show that $U(\n^+)y_0$ is finite-dimensional for any $y_0 \in Y$.  

Let $- : Y \to Y/X$ denote the natural homomorphism.  Since $Y/X \cong Z \in {\rm Ob}(\mathcal W)$, we know that $U(\n^+) \overline{y_0}$ is a finite-dimensional subspace of $Y/X$.  Let $y_1, \ldots, y_n \in Y$ so that $U(\n^+) \overline{y_0} = {\rm span}_\cc \{ \overline{y_1}, \ldots, \overline{y_n} \}$.

Since $\overline{y_0} \in U(\n^+) \overline{y_0} = {\rm span}_\cc \{ \overline{y_1}, \ldots, \overline{y_n} \}$, there exist $c_1, \ldots, c_n \in \cc$ such that 
$$\overline{y_0} = \sum_{1 \le k \le n} c_k \overline{y_k},$$
and thus
$$y_0 = x_0 + \sum_{1 \le k \le n} c_k y_k,$$
for some $x_0 \in X$.  Similarly, since $U(\n^+) \overline{y_0}$ is $\n^+$-invariant, for every $i>0$ and $j \in \{ 1, \ldots, n \}$, we have
$$d_i \overline{y_j} = \sum_{1 \le k \le n}  c_{i,j,k} \overline{y_k}$$
for some $c_{i,j,k} \in \cc$; consequently there is some $x_{i,j} \in X$ such that 
$$d_i y_j = x_{i,j} +  \sum_{1 \le k \le n}  c_{i,j,k} y_k.$$

Let 
\begin{align}\label{eqn:defSubspaceN}
N = U(\n^+) x_0 + \sum_{\substack{i=1,2 \\ 1 \le j \le n}} U(\n^+) x_{i,j},
\end{align}
and define $M = \cc y_1 + \cdots + \cc y_n + N.$
Since $X \in {\rm Ob}(\mathcal W)$, each $U(\n^+)x_{i,j}$ is finite-dimensional and $U(\n^+)x_0$ is finite-dimensional, and so $N$ is finite-dimensional.  Therefore $M$ is a finite-dimensional vector space containing $y_0 = x_0 + \sum_{1 \le k \le n} c_k y_k$.  

It now suffices to check that $U(\n^+)M \subseteq M$.  If $n \in N \subseteq M$ and $u^+ \in U(\n^+)$, then $u^+n \in N \subseteq M$.  To prove that $u^+ y_j \in M$ whenever $u^+ \in U(\n^+)$ and $j \in \{ 1, \ldots, n \}$, it is enough to show that $d_i y_j \in M$ whenever $j \in \{ 1, \ldots, n \}$ and $i>0$.  It is obvious that $d_i y_j \in M$ if $i = 1$ or $i = 2$ since the sum in (\ref{eqn:defSubspaceN}) is taken over $i=1,2$.  If $i>2$, then the result follows from the fact that $U(\n^+)$ is generated over $\cc$ by $d_1$ and $d_2$.
\end{proof}

As shown in Lemma \ref{lem:quasiNil_1dim}, the simple $\n^+$-modules are indexed by the set $(\n^+/[\n^+,\n^+])^*$.
Batra and Mazorchuk \cite{BatMaz} showed that, for a large class of Lie algebras, $\W$ decomposes into subcategories $\W(\psi)$,  labeled by $\psi \in (\n^+/[\n^+,\n^+])^*$. This decomposition provides a framework for the remainder of this paper, and so we present it here.   

\begin{thm}[\cite{BatMaz}] \label{thm:Wdecomp}
 Let $V \in \W$. Then,
\begin{itemize}
\item[(i)] for $\psi \in (\n^+/[\n^+, \n^+])^*$, the set 
$$
V^{\psi} = \{ v \in V \mid (x-\psi(x))^kv=0 \ \mbox{for $x \in \n^+$ and $k>>0$}\}
$$  
is a $\g$-submodule of $V$;
\item[(ii)] $V= \oplus_{\psi} V^{\psi}$;
\item[(iii)] if $\psi \neq \nu$, then $\Hom_{\g}(V^{\psi}, V^{\nu})=0$;
\item[(iv)] every $\n^+$-submodule of $V^\psi$ contains a nonzero $\psi$-Whittaker vector.
\end{itemize}
\end{thm}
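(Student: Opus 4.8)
The plan is to prove the four statements roughly in order, since each feeds the next. For (i), I would fix $\psi$ and $v \in V^{\psi}$ and show $gv \in V^{\psi}$ for $g \in \g$; since $V^{\psi}$ is visibly a subspace, it suffices to check stability under $\g$. The key point is that $\n^+$ is generated by $d_1,d_2$ (as noted in the proof of Theorem \ref{thm:quotU+LocFin}), so for any $x \in \n^+$ and $g \in \g$, the bracket $[x,g]$ again lies in $\g$, and iterated brackets $(\mathrm{ad}\, x)^m g$ eventually land in $\n^+$ in a controlled way. Working inside the finite-dimensional $\n^+$-module $U(\n^+)v$, one has $(x - \psi(x))^N v = 0$ for all $x \in \n^+$ and $N \gg 0$; then expand $(x-\psi(x))^M(gv)$ using the identity $x^M g = \sum_j \binom{M}{j} (\mathrm{ad}\,x)^j(g)\, x^{M-j}$ (more precisely its shifted version with $x-\psi(x)$), and observe that for $M$ large every term either carries a high power of $(x-\psi(x))$ acting on $v$ or an iterated adjoint action that pushes $g$ into $\n^+$ where high powers again annihilate $v$. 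This gives $gv \in V^{\psi}$.

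For (ii), the decomposition $V = \bigoplus_\psi V^\psi$ is essentially the primary decomposition for the locally finite action of the finitely generated commutative-modulo-derived algebra; concretely, since $U(\n^+)v$ is finite-dimensional and $\n^+/[\n^+,\n^+]$ acts on it through a finite-dimensional quotient, I would invoke simultaneous generalized eigenspace decomposition — the commuting (modulo nilpotents) action of $\bar d_1, \bar d_2$ on $U(\n^+)v$ decomposes it into generalized joint eigenspaces, each joint eigenvalue pair $(\psi(d_1),\psi(d_2))$ determining a $\psi$. Summing over all $v$ gives (ii); directness follows because distinct $\psi$ give disjoint generalized eigenvalue data. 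Statement (iii) is then immediate: a $\g$-homomorphism $f\colon V^\psi \to V^\nu$ is in particular $\n^+$-linear, so it carries generalized $\psi$-eigenvectors to generalized $\psi$-eigenvectors inside $V^\nu$, but $V^\nu$ has none unless $\psi = \nu$; hence $f = 0$.

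For (iv), let $W \subseteq V^\psi$ be a nonzero $\n^+$-submodule. Pick $0 \neq w \in W$; then $U(\n^+)w$ is a nonzero finite-dimensional $\n^+$-submodule of $W$, so by finite-dimensionality it contains a simple $\n^+$-submodule, which by Lemma \ref{lem:quasiNil_1dim} is one-dimensional of the form $\cc_{\psi'}$ for some $\psi'$. A nonzero vector spanning it is a $\psi'$-Whittaker vector, and since it lies in $V^\psi$, comparing generalized versus honest eigenvalues forces $\psi' = \psi$. Thus $W$ contains a nonzero $\psi$-Whittaker vector. The main obstacle, and the place where the Virasoro-specific input really matters, is (i): making the commutator bookkeeping in $(x-\psi(x))^M(gv)$ genuinely finite without the Noetherian hypothesis of \cite{BatMaz}. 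The resolution is precisely that $\n^+$ is \emph{finitely generated} (by $d_1,d_2$), so it is enough to verify $\g$-stability on these two generators and on $d_0, z$, and for those the iterated adjoint action $(\mathrm{ad}\, d_i)^j(d_k)$ stays within a bounded band of $\n^+$, keeping all sums finite.
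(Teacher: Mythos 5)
The paper does not actually reprove this theorem: its ``proof'' is a citation to \cite{BatMaz}, with remarks on how the general construction specializes to the Virasoro case via Lemma \ref{lem:quasiNil_1dim}. You are therefore attempting more than the authors do, and the two central steps of your direct argument have genuine gaps; in both cases the missing ingredient is precisely the finite-dimensional structure theory that \cite{BatMaz} supplies. In (i), after expanding $(x-\psi(x))^M(gv)=\sum_j\binom{M}{j}\bigl((\mathrm{ad}\,x)^jg\bigr)(x-\psi(x))^{M-j}v$ with $x=d_i$ and $g=d_m$, the terms with $M-j$ large vanish, but the surviving terms (those with $j>M-N$) have the form $c_j\,d_{m+ji}\,(d_i-\psi_i)^{M-j}v$ with $m+ji\gg0$. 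Saying that $g$ has been ``pushed into $\n^+$ where high powers again annihilate $v$'' does not finish the job: each such term involves a \emph{single} application of $d_{m+ji}$, and since $d_{m+ji}\in[\n^+,\n^+]$ this operator acts on $U(\n^+)v$ with generalized eigenvalue $\psi(d_{m+ji})=0$, i.e.\ nilpotently, not necessarily as zero. What you actually need is that $d_l$ annihilates the finite-dimensional module $U(\n^+)v$ outright for all $l\gg0$ (equivalently, that the kernel of $\n^+\to\fgl(U(\n^+)v)$ contains the tail of the grading); that is a nontrivial lemma about quasi-nilpotent algebras, not a consequence of $(\mathrm{ad}\,d_i)^j(d_m)$ landing in $\n^+$. (Note also that $(\mathrm{ad}\,d_i)^j(d_m)$ does \emph{not} stay in a bounded band: it is a nonzero multiple of $d_{m+ji}$ for every large $j$ unless $i$ divides $m$; the sums are finite only because the binomial expansion has $M+1$ terms.) Your argument further presupposes that $U(\n^+)v\subseteq V^\psi$ with a uniform nilpotency degree --- i.e.\ that $V^\psi$ is already an $\n^+$-submodule --- which is part of what is being proved, and checking $\g$-stability only against $d_1,d_2,d_0,z$ omits $\n^-$ entirely.

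The same missing lemma undermines (ii): two operators that commute modulo nilpotent ones do \emph{not} in general admit a simultaneous generalized eigenspace decomposition into common invariant subspaces. For instance $\rho(x)=\mathrm{diag}(0,1)$, $\rho(y)=E_{12}$ defines a module over the two-dimensional non-abelian Lie algebra whose generalized weight spaces are not submodules; and for $\fsl_2$, which is likewise generated by two elements, the statement fails outright. The decomposition $M=\bigoplus_\psi M^\psi$ of a finite-dimensional $\n^+$-module into $\n^+$-\emph{submodules} is exactly the content of the results of \cite{BatMaz} that the paper leans on, and it genuinely uses quasi-nilpotency. Granting (i) and (ii), your arguments for (iii) and (iv) are correct and essentially the paper's: (iv) is Lemma \ref{lem:quasiNil_1dim} applied to a finite-dimensional $\n^+$-submodule, plus the observation that a $\psi'$-eigenvector inside $V^\psi$ forces $\psi'=\psi$. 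So the honest options are either to cite \cite{BatMaz} as the paper does, or to first establish the finite-dimensional decomposition and annihilation lemmas for the Virasoro $\n^+$ and only then run your expansion argument.
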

\begin{proof}
In \cite{BatMaz}, Batra and Mazorchuk define the submodules in the decomposition using a more general construction.  However, for the Virasoro algebra the result simplifies to the present statement.  This simplification follows from Lemma \ref{lem:quasiNil_1dim} and the fact that the equivalence relation of \cite[p.~7]{BatMaz} is trivial for quasi-nilpotent algebras. Statement (iv) follows from Lemma \ref{lem:quasiNil_1dim} and Proposition 5 of \cite{BatMaz}.  


\end{proof}

In light of this decomposition, we  define $\W (\psi)$, $\psi \in \left( \n^+ / [\n^+, \n^+] \right)^*$,  as the subcategory of $\W$ consisting of all objects $V \in {\rm Ob} (\W)$ so that $V=V^{\psi}$. 

\subsection{Simple Objects in $\W$}\label{subsec:simpObjW}
 Theorem \ref{thm:Wdecomp}(ii) implies that to determine the simple modules in $\W$, it is enough to determine the simple objects in $\W(\psi)$ for each $\psi \in (\n^+/[\n^+, \n^+])^*$.  We show that the simple objects in $\W$ are either Whittaker modules (for $\psi \neq 0$) or lowest weight modules (for $\psi =0$). 
We first consider the case $\psi \neq 0$. 

\begin{prop}\label{prop:simplePsiNot0}
Let $0 \neq \psi\in (\n^+/[\n^+, \n^+])^*$, and suppose $V \in \W(\psi)$ be a simple object.  Then $V \cong L(\psi, \xi)$ for some $\xi \in \cc$.
\end{prop}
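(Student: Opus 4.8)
The plan is to identify $V$ as a cyclic Whittaker module of type $\psi$ on which $z$ acts by a scalar, and then to quote Proposition~\ref{cor:SimpleL_PsiXi}.

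First I would produce a cyclic Whittaker vector. Since $V \in \W(\psi)$ means $V = V^\psi$, Theorem~\ref{thm:Wdecomp}(iv) — applied to $V$ itself, viewed as an $\n^+$-submodule of $V^\psi$ — supplies a nonzero $w \in V$ with $xw = \psi(x)w$ for all $x \in \n^+$. Because $V$ is simple and $U(\g)w$ is a nonzero submodule, $V = U(\g)w$, so $V$ is a Whittaker module of type $\psi$ with cyclic Whittaker vector $w$.

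Next I would show that $z$ acts on $V$ by a scalar. Since $V = U(\g)w$ is a homomorphic image of $U(\g)$, and $U(\g)$ has countable dimension over $\cc$ (the Virasoro algebra itself being countable-dimensional), $V$ has at most countable $\cc$-dimension; as $\cc$ is uncountable, Dixmier's version of Schur's Lemma gives $\mathrm{End}_\g(V) = \cc$. Because $z \in Z(\g)$ commutes with all of $U(\g)$, multiplication by $z$ is a $\g$-module endomorphism of $V$, hence acts as a scalar $\xi \in \cc$. Then $V$ is a Whittaker module of type $0 \neq \psi$ on which $z$ acts by the scalar $\xi$, so Proposition~\ref{cor:SimpleL_PsiXi} yields $V \cong L(\psi,\xi)$, completing the argument.

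The step requiring the most care is the reduction to a scalar $z$-action: the usual Schur's Lemma over $\cc$ does not apply since $U(\g)$ is infinite-dimensional, so one must verify the countable-dimensionality hypothesis and cite the Dixmier version. A route that avoids that lemma is to study $\ann_{Z}(w) \subseteq Z \cong \cc[z]$ directly — if it is nonzero, Proposition~\ref{thm:generalV} together with the simplicity of $V$ forces its monic generator to be linear and gives $V \cong L(\psi,\xi)$ at once — but excluding $\ann_{Z}(w) = 0$ amounts again to showing that $z$ cannot act on a simple module without eigenvalues, which is once more the countability argument. I would present the Schur-lemma line as the main proof.
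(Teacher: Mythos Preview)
Your proof is correct and follows essentially the same route as the paper's: use Theorem~\ref{thm:Wdecomp}(iv) to find a Whittaker vector, invoke simplicity to make $V$ a cyclic Whittaker module, and then appeal to the classification in \cite{OW2008} (restated here as Proposition~\ref{cor:SimpleL_PsiXi}). The paper's proof is terser---it simply says ``the result follows from \cite{OW2008}''---whereas you make explicit the Dixmier--Schur step showing $z$ acts by a scalar; the paper uses exactly this argument (citing \cite[Ex.~2.12.28]{rowen:rt88}) later in the proof of Proposition~\ref{lem:simplesInN(0)}, so your added detail is consistent with the paper's own tools.
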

\begin{proof}
From Theorem \ref{thm:Wdecomp}(iv), there is an $\psi$-eigenvector $w \in V$.  Since $V$ is simple, we must have that $V$ is in fact a Whittaker module.  Therefore, the result follows from \cite{OW2008}.
\end{proof}

The case $\psi=0$ is handled in Proposition \ref{lem:simplesInN(0)}.   We begin by presenting the construction of several relevant modules. An element $\alpha \in \h^*$ can be identified with a pair $(\xi, h) \in \cc \times \cc$, where $\alpha(d_0)=h$ and $\alpha(z)=\xi$.  Then the Verma module of lowest weight $\alpha=(\xi, h)$ is given by
$$
M(0,\xi,h)= U(\g) \otimes_{U(\h\oplus\n^+)} \cc_{(\xi, h)}
$$ where $\h$ acts on the one-dimensional $\h \oplus \n^+$-module $\cc_{(\xi, h)}$ by $(\xi, h)$ and $\n^+$ acts by $0$.  Then $M(0,\xi,h)$ has a unique simple quotient $L(0,\xi,h)$.  Moreover, all simple lowest weight modules are isomorphic to some $L(0,\xi,h)$ (cf. \cite{MP95}). We also define a ``universal" module 
$$
M(0,\xi)= U(\g) \otimes_{U(\cc z\oplus\n^+)} \cc_{\xi}
$$ where $z$ acts on $\cc_{\xi}$ by $\xi$ and $\n^+$ acts by $0$. 

By Theorem \ref{thm:Wdecomp}(iv), a simple module $V \in \W(0)$ is necessarily generated by a vector $w\in V$ such that $\n^+$ acts by $0$ (that is, a Whittaker vector for $\psi=0$). However, we must also show that $w$ can be taken to be a weight vector.  

In order to write the following results more concisely, we adopt some notation.  Define a {\it partition} $\mu$ to be a non-decreasing sequence of positive integers $\mu=( 0 < \mu_1 \leq \mu_2 \leq \cdots \leq \mu_r)$.  Let $\p$ represent the set of partitions. 
Also define
\begin{eqnarray*}
| \mu | &=& \mu_1 +  \mu_2 + \cdots + \mu_r \quad \mbox{(the size of $\mu$)}\\
\# \mu &=& r \quad \mbox{(the $\#$ of parts of $\mu$)}\\
d_{ \mu} &=& d_{\mu_1} d_{\mu_2}\cdots d_{\mu_r} = d_0^{\mu(0)} d_1^{\mu(1)} \cdots \\
d_{-\mu} &=& d_{-\mu_r} \cdots d_{-\mu_2}  d_{-\mu_1}= \cdots d_{-1}^{\mu(1)} d_0^{\mu(0)}\\
d_{-\mu} &=& d_{\mu} =1 \quad \mbox{for  $\mu = \emptyset$}.
\end{eqnarray*}

The first lemma is evident by the construction of $M(0,\xi)$ along with the PBW Theorem. 
\begin{lem}
The module $M(0,\xi)$ has a basis
$$\{ d_{-\lambda} w_i \mid \lambda \in \mathcal P, \, i \ge 0 \},$$
where $w_i= d_0^i \otimes 1$.  Moreover,  
$$d_0w_i = w_{i+1}, \qquad zw_i = \xi w_i, \quad d_n w_i = 0 \quad (n>0)$$
for $i \ge 0$ and $M(0,\xi)$ is generated by $w_0$ as a $U(\g)$-module.  
\end{lem}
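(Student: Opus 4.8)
The plan is to apply the Poincar\'e--Birkhoff--Witt theorem to the pair $(\g, \cc z \oplus \n^+)$. As a vector space $\g = \n^- \oplus \cc d_0 \oplus (\cc z \oplus \n^+)$, so PBW shows that $U(\g)$ is free as a right $U(\cc z \oplus \n^+)$-module, with basis the ordered monomials $d_{-\lambda} d_0^{\,i}$ for $\lambda \in \p$ and $i \ge 0$, where the ordering places the generators $d_{-k}$ ($k>0$) before $d_0$, in agreement with the convention fixed above for $d_{-\mu}$. Tensoring with $\cc_\xi$ over $U(\cc z \oplus \n^+)$ then gives at once that $\{\, d_{-\lambda} d_0^{\,i} \otimes 1 \mid \lambda \in \p, \ i \ge 0 \,\}$ is a $\cc$-basis of $M(0,\xi)$. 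Since $\g$ (and hence $U(\g)$) acts by left multiplication on the first tensor factor, $d_{-\lambda} d_0^{\,i} \otimes 1 = d_{-\lambda}(d_0^{\,i}\otimes 1) = d_{-\lambda} w_i$, which is the asserted basis.

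For the displayed relations, $d_0 w_i = d_0(d_0^{\,i}\otimes 1) = d_0^{\,i+1}\otimes 1 = w_{i+1}$ directly from the definition of $w_i$, and $z w_i = \xi w_i$ because $z$ is central: $z(d_0^{\,i}\otimes 1) = d_0^{\,i} z \otimes 1 = d_0^{\,i}\otimes z\cdot 1 = \xi w_i$. For $n>0$ I would prove $d_n w_i = 0$ by induction on $i$. The base case $d_n w_0 = 1\otimes d_n\cdot 1 = 0$ holds since $\n^+$ acts by $0$ on $\cc_\xi$. For the inductive step, the bracket relation gives $d_n d_0 = d_0 d_n + [d_n,d_0] = d_0 d_n + n d_n$, so $d_n w_i = d_n d_0 w_{i-1} = (d_0 + n) d_n w_{i-1} = 0$ by the inductive hypothesis.

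Finally, $M(0,\xi)$ is generated by $w_0 = 1\otimes 1$ over $U(\g)$: every element of the induced module is a $\cc$-combination of single tensors, hence can be written as $u\otimes 1 = u\,w_0$ for some $u \in U(\g)$, so $M(0,\xi) = U(\g)\,w_0$.

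As the paper already signals, there is no genuine obstacle here: the entire statement is an immediate consequence of PBW together with the defining relations of $\g$ and the construction of $M(0,\xi)$. The only points requiring mild care are bookkeeping --- choosing the PBW ordering so that the monomials match the convention for $d_{-\mu}$ --- and the one-line induction verifying $d_n w_i = 0$ for $n>0$.
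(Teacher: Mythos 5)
Your proof is correct and follows exactly the route the paper intends: the paper dismisses this lemma as ``evident by the construction of $M(0,\xi)$ along with the PBW Theorem,'' and your argument simply fills in the details of that same PBW/induced-module reasoning, including the short induction for $d_n w_i = 0$. No issues.
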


\begin{lem}
Let $\lambda \in \p$ and $j, n \in \z_{>0}$.  
Then $d_n d_{-\lambda} w_j$ is a linear combination of vectors in the following set:
$$
\{ d_{-\gamma} w_j \mid \# \gamma \leq \# \lambda \} \cup \{ d_{-\gamma} w_{j+1} \mid \# \gamma < \# \lambda -1 \} \cup \{ d_{- \hat{\lambda}} w_{j+1} \mid \hat{\lambda}= ( \lambda_1, \ldots, \hat{\lambda}_i, \ldots) \} .
$$
Moreover, the coefficient of $d_{- \hat{\lambda}} w_{j+1}$ is nonzero if and only if $n= \lambda_i$.
\end{lem}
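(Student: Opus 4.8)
The plan is to induct on the number $r=\#\lambda$ of parts, commuting $d_n$ rightward past the factors of $d_{-\lambda}=d_{-\lambda_r}\cdots d_{-\lambda_1}$. When $r=0$ there is nothing to prove, since $d_nw_j=0$. For the inductive step, the standard expansion gives
$$
d_n d_{-\lambda} w_j \;=\; \sum_{k=1}^r \big( d_{-\lambda_r}\cdots d_{-\lambda_{k+1}} \big)\, [d_n, d_{-\lambda_k}]\, \big( d_{-\lambda_{k-1}}\cdots d_{-\lambda_1} \big)\, w_j,
$$
because the leftover term $d_{-\lambda}d_nw_j$ vanishes. Using $[d_n,d_{-\lambda_k}]=(n+\lambda_k)d_{n-\lambda_k}+\delta_{n,\lambda_k}\tfrac{n^3-n}{12}z$ and $zw_j=\xi w_j$, each central contribution is a scalar multiple of $d_{-\hat\lambda^{(k)}}w_j$, where $\hat\lambda^{(k)}$ is $\lambda$ with its $k$-th part deleted; since $\#\hat\lambda^{(k)}=r-1\le r$, these lie in the first of the three target sets. (All the vectors in the three target sets belong to the PBW basis of Lemma \ref{lem}, so the coefficients we need to control are unambiguous.)

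It remains to handle the terms $(n+\lambda_k)\big(d_{-\lambda_r}\cdots d_{-\lambda_{k+1}}\big)d_{n-\lambda_k}\big(d_{-\lambda_{k-1}}\cdots d_{-\lambda_1}\big)w_j$, split according to the sign of $n-\lambda_k$. First I would record an elementary reordering fact, immediate from $[d_{-a},d_{-b}]=(b-a)d_{-(a+b)}$ (no central term, as $a,b>0$) together with the PBW theorem: a product of $p$ operators $d_{-a_i}$ ($a_i>0$) applied to $w_j$ is a $\cc$-linear combination of vectors $d_{-\delta}w_j$ with $\#\delta\le p$, and likewise with $w_{j+1}$. If $n-\lambda_k>0$, then $d_{n-\lambda_k}$ is again a raising operator, $d_{-\lambda_{k-1}}\cdots d_{-\lambda_1}=d_{-\rho}$ for a partition $\rho$ with $k-1<r$ parts, and one applies the inductive hypothesis to $d_{n-\lambda_k}d_{-\rho}w_j$ and then left-multiplies by $d_{-\lambda_r}\cdots d_{-\lambda_{k+1}}$ (which has $r-k$ factors); tracking part counts through the reordering fact shows every resulting term is a $d_{-\gamma}w_j$ with $\#\gamma\le r$ or a $d_{-\gamma}w_{j+1}$ with $\#\gamma\le r-2$. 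If $n-\lambda_k<0$, then $d_{n-\lambda_k}=d_{-(\lambda_k-n)}$ is a lowering operator and the whole expression is a product of $r$ lowering operators applied to $w_j$, hence a combination of $d_{-\gamma}w_j$ with $\#\gamma\le r$. In particular neither of these cases produces a term $d_{-\hat\lambda}w_{j+1}$ with $\#\hat\lambda=r-1$.

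The decisive case is $n=\lambda_k$, where $d_{n-\lambda_k}=d_0$. Since $[d_0,d_{-m}]=m\,d_{-m}$, pushing $d_0$ rightward past $d_{-\lambda_{k-1}}\cdots d_{-\lambda_1}$ merely adds the scalar $\lambda_1+\cdots+\lambda_{k-1}$, and then $d_0w_j=w_{j+1}$, so this term equals $(n+\lambda_k)\big(d_{-\hat\lambda^{(k)}}w_{j+1}+(\lambda_1+\cdots+\lambda_{k-1})\,d_{-\hat\lambda^{(k)}}w_j\big)$, with $\hat\lambda^{(k)}$ already in the normal form and $\#\hat\lambda^{(k)}=r-1$. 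Its $w_{j+1}$-part is exactly a term $d_{-\hat\lambda}w_{j+1}$ of the required shape, with coefficient $n+\lambda_k=2n\ne0$. Assembling the pieces, the coefficient of a given $d_{-\hat\lambda}w_{j+1}$ (with $\hat\lambda$ obtained by deleting one part) receives contributions only from this last case, and only from those indices $k$ with $\lambda_k=n$; since all such contributions are positive, the total is nonzero precisely when $n$ equals the deleted part. I expect the only real difficulty to be this bookkeeping: one must check carefully that whenever $n\ne\lambda_k$ the $w_{j+1}$-terms produced have strictly fewer than $r-1$ parts (and the $w_j$-terms never more than $r$), so that nothing can perturb the coefficient of $d_{-\hat\lambda}w_{j+1}$. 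Note that the value of $\xi$ and the possible vanishing of $n^3-n$ are irrelevant here, since they affect only the coefficients of $d_{-\hat\lambda^{(k)}}w_j$-type terms.
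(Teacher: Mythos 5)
Your proposal is correct and follows essentially the same route as the paper: induction on $\#\lambda$, the Leibniz expansion of $d_n$ past the factors of $d_{-\lambda}$, and a three-way case split on the sign of $n-\lambda_k$, with the $n=\lambda_k$ case producing the $2n\,d_{-\hat\lambda}w_{j+1}$ term whose positive contributions cannot cancel. The only differences are cosmetic (base case $r=0$ instead of $r=1$, and a more explicit statement of the reordering bookkeeping that the paper leaves implicit).
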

\begin{proof}
We use induction on $\# \lambda$, with the case $\#\lambda=1$ being obvious.

Now suppose $\# \lambda >1$.  We have
\begin{eqnarray*}
d_n d_{-\lambda} w_j &=& d_{-\lambda} d_n w_j + \sum_i d_{-\lambda_1} \cdots [d_n, d_{-\lambda_i}] \cdots d_{-\lambda_s} w_j.
\end{eqnarray*}
Note that $d_{-\lambda} d_n w_j=0$.  For the second term, we consider three cases.
\begin{itemize}
\item[(i)] If $n>\lambda_i$, $[d_n, d_{-\lambda_i}]  =d_{n-\lambda_i}$ with $n-\lambda_i >0$.  By the inductive hypothesis, $[d_n, d_{-\lambda_i}] \cdots d_{-\lambda_s} w_j$ is contained in the span of
$$
\{ d_{-\gamma} w_j \mid \# \gamma \leq s-i \} \cup \{ d_{\gamma} w_{j+1} \mid \# \gamma <  s-i-1 \} \cup \{ d_{- \hat{\lambda}} w_{j+1} \mid \hat{\lambda}= ( \lambda_{i+1} \ldots, \hat{\lambda}_k, \ldots) \} .
$$
which implies that $d_{-\lambda_1} \cdots [d_n, d_{-\lambda_i}] \cdots d_{-\lambda_s} w_j$ is contained in the span of 
$$
\{ d_{-\gamma} w_j \mid \# \gamma \leq \# \lambda \} \cup \{ d_{-\gamma} w_{j+1} \mid \# \gamma < \# \lambda -1 \}.
$$

\item[(ii)] If $n<\lambda_i$, then $[d_n, d_{-\lambda_i}]  =d_{n-\lambda_i}$ with $n-\lambda_i <0$.  This implies that 
$d_{-\lambda_1} \cdots [d_n, d_{-\lambda_i}] \cdots d_{-\lambda_s} w_j$ is contained in the span of 
$$
\{ d_{-\gamma} w_j \mid \# \gamma \leq \# \lambda \}.
$$

\item[(iii)] If $n = \lambda_i$, then $[d_n, d_{-\lambda_i}]  =2n d_0+ cz$ for some $c \in \cc$.  Then,
$$
d_{-\lambda_1} \cdots [d_n, d_{-\lambda_i}] \cdots d_{-\lambda_s} w_j = 2 n d_{-\hat{\lambda}} w_{j+1} + (2n(\lambda_{i+1} + \cdots + \lambda_s)+ c \xi) d_{-\hat{\lambda}} w_{j}.
$$
Note that if $\lambda_i$ appears with multiplicity $k$, then the coefficient of $d_{-\hat{\lambda}}w_{j+1}$ is $2kn>0$.
\end{itemize}
Combining these cases proves the result.
\end{proof}

Let $\lambda, \mu \in \p$, and write $\lambda = (\lambda_1, \ldots, \lambda_n)$ and $\mu=(\mu_1, \ldots, \mu_k)$.  If $\mu_j=\lambda_{i_j}$ for some sequence $i_1, \ldots, i_k$ of distinct integers, then define 
$$\lambda-\mu=(\lambda_1, \ldots, \hat{\lambda}_{i_1}, \ldots,  \hat{\lambda}_{i_k}, \ldots, \lambda_n).$$  If it is not the case that $\mu_j=\lambda_{i_j}$ for some sequence $i_1, \ldots, i_k$ of distinct integers, then we regard $\lambda - \mu$ as undefined.  

\begin{lem} \label{Lem:wj}
Let $j \in \z_{\geq 0}$, $\mu, \lambda \in \p$.  Then $d_{\mu} d_{-\lambda} w_j$ is a linear combination of vectors in the following set 
$$
\{ d_{-\gamma} w_j \mid \# \gamma \leq \# \lambda \} \cup \{ d_{-\gamma} w_{j+k} \mid 0<k \leq \# \mu, \# \gamma < \# \lambda -k  \} \cup \{ d_{- (\lambda-\mu)} w_{j+\# \mu} \},
$$
where the coefficient of $d_{- (\lambda-\mu)} w_{j+\# \mu}$ is nonzero if and only if $\lambda-\mu$ is defined. 
\end{lem}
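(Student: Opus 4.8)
The plan is to argue by induction on $\#\mu$. The base cases are immediate: if $\#\mu=0$ then $d_\mu=1$ and $\lambda-\emptyset=\lambda$, so there is nothing to prove, while $\#\mu=1$ is precisely the preceding lemma, once one notes that for a single part $n$ the partition $\hat\lambda$ obtained from $\lambda$ by deleting a part equal to $n$ is exactly $\lambda-(n)$, which is defined if and only if $n$ is a part of $\lambda$.

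For the inductive step I would write $\mu=(\mu_1\le\cdots\le\mu_r)$ with $r\ge 2$ and set $\mu'=(\mu_2,\dots,\mu_r)\in\p$, so that $d_\mu=d_{\mu_1}d_{\mu'}$ and $\#\mu'=r-1$. First apply the inductive hypothesis to $d_{\mu'}d_{-\lambda}w_j$, writing it as a linear combination of vectors $d_{-\gamma}w_j$ with $\#\gamma\le\#\lambda$, vectors $d_{-\gamma}w_{j+k}$ with $0<k\le r-1$ and $\#\gamma<\#\lambda-k$, and the single vector $d_{-(\lambda-\mu')}w_{j+r-1}$, the last occurring with nonzero coefficient exactly when $\lambda-\mu'$ is defined. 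Then apply $d_{\mu_1}$ to each of these basis vectors, re-expand every $d_{\mu_1}d_{-\gamma}w_i$ using the preceding lemma, and re-collect.

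The bookkeeping is the heart of the argument. It rests on the fact that $d_{\mu_1}d_{-\gamma}w_i$ is supported on vectors $d_{-\delta}w_i$ with $\#\delta\le\#\gamma$ together with vectors at level $i+1$ whose part count has dropped by at least one, the sole exception being $d_{-(\gamma-(\mu_1))}w_{i+1}$, which occurs with nonzero coefficient precisely when $\mu_1$ is a part of $\gamma$. Propagating the level index $i\in\{j,\dots,j+r-1\}$ and the part count through this one further application of $d_{\mu_1}$, one checks that every vector produced from the first two families above again lands in one of the two corresponding families of the statement, now with $k$ allowed up to $r$; and that at the top level $j+r$ with $\#\lambda-r$ parts the only contribution comes from applying $d_{\mu_1}$ to the inductive hypothesis's top term $c\,d_{-(\lambda-\mu')}w_{j+r-1}$, where by the preceding lemma the resulting coefficient is a nonzero multiple of $c$ exactly when $\mu_1$ is a part of $\lambda-\mu'$. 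Since deletion of parts from a partition is order-independent, $(\lambda-\mu')-(\mu_1)=\lambda-\mu$, and $\lambda-\mu$ is defined if and only if $\lambda-\mu'$ is defined and $\mu_1$ is a part of $\lambda-\mu'$; hence the coefficient of $d_{-(\lambda-\mu)}w_{j+r}$ is nonzero precisely when $\lambda-\mu$ is defined, completing the induction.

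I expect the main obstacle to be exactly this simultaneous tracking of the level index and the part count under repeated use of the preceding lemma: verifying that no term leaks out of the three allowed families and, above all, pinning down that the coefficient of $d_{-(\lambda-\mu)}w_{j+\#\mu}$ is governed solely by the top term of the inductive hypothesis and vanishes exactly when $\lambda-\mu$ is undefined. A secondary, purely organizational point is the treatment of the degenerate cases in which $\lambda-\mu'$ or $\lambda-\mu$ fails to be defined: there the corresponding vector is simply absent from the spanning set, and the coefficient assertion holds vacuously.
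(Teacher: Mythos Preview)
Your proposal is correct and follows essentially the same inductive argument as the paper. The only cosmetic difference is the order of peeling: the paper factors $d_\mu = d_{\mu'} d_{\mu_r}$ with $\mu' = (\mu_1, \ldots, \mu_{r-1})$, applying the preceding lemma to $d_{\mu_r} d_{-\lambda} w_j$ first and then invoking the inductive hypothesis on $d_{\mu'}$, whereas you factor $d_\mu = d_{\mu_1} d_{\mu'}$ with $\mu' = (\mu_2, \ldots, \mu_r)$ and reverse this order; the bookkeeping and the isolation of the top-level coefficient work out the same way in either direction.
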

\begin{proof}
We induct on $\# \mu$.  The previous lemma proves the case $\# \mu=1$.  For the inductive step, write $\mu = (\mu_1, \ldots, \mu_r)$, and let $\mu'=(\mu_1, \ldots, \mu_{r-1} )$. Using the previous lemma, we have
\begin{eqnarray*}
d_{\mu} d_{-\lambda} &=& d_{\mu'} \left( d_{\mu_r} d_{-\lambda} w_j \right) \\
&=& d_{\mu'}  \sum_{\gamma \mid \# \gamma \leq \# \lambda } a_{\gamma} d_{-\gamma} w_j \\
&& + d_{\mu'} \sum_{\gamma \mid \# \gamma < \# \lambda -1} b_{\gamma} d_{-\gamma} w_{j+1} \\
&& + c d_{\mu'} d_{- \hat{\lambda}} w_{j+1},
\end{eqnarray*}
where $c \neq 0$ if and only if $\mu_r=\lambda_i$. Since $\# \mu'= \# \mu -1$, we can now apply the inductive argument to get the result.
\end{proof}

Continue to use the notation $w_0, w_1, \ldots \in M(0,\xi)$ from above, and define the following subspace of $M(0,\xi)$:
\begin{align}\label{eqn:defOfW}
W_\xi=\mbox{span}_{\cc}\{ w_i \mid i \in \z_{\ge 0} \}.
\end{align}

\begin{lem}\label{Lem:submodMxi}
Let $V \subseteq M(0,\xi)$ be a submodule. Then $V \cap W_\xi \neq 0$.
\end{lem}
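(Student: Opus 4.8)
The plan is to take an arbitrary nonzero $v \in V$, expand it in the PBW basis $\{d_{-\lambda} w_i \mid \lambda \in \p, i \ge 0\}$ of $M(0,\xi)$, and hit it with a carefully chosen element of $U(\n^+)$ to produce a nonzero element lying in $W_\xi = \mathrm{span}_\cc\{w_i\}$. Write $v = \sum_{\lambda, i} c_{\lambda,i}\, d_{-\lambda} w_i$ with finitely many nonzero coefficients. Among all $\lambda$ with $c_{\lambda,i} \ne 0$ for some $i$, choose one, call it $\lambda^{\max}$, of maximal number of parts $\# \lambda$; the idea is that applying $d_{\lambda^{\max}}$ (the product $d_{\lambda^{\max}_1} \cdots d_{\lambda^{\max}_n}$ of positive-index generators) will annihilate the ``lower'' terms and leave a controlled, nonzero contribution from the top terms.

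The key computational engine is Lemma \ref{Lem:wj}: for $\mu, \lambda \in \p$, the element $d_\mu d_{-\lambda} w_j$ is a linear combination of vectors in
$$
\{ d_{-\gamma} w_j \mid \# \gamma \le \# \lambda \} \cup \{ d_{-\gamma} w_{j+k} \mid 0 < k \le \# \mu,\ \# \gamma < \# \lambda - k \} \cup \{ d_{-(\lambda - \mu)} w_{j + \#\mu} \},
$$
with the coefficient of $d_{-(\lambda-\mu)} w_{j+\#\mu}$ nonzero precisely when $\lambda - \mu$ is defined. Taking $\mu = \lambda^{\max}$, so $\#\mu = \#\lambda^{\max}$: if $\#\lambda < \#\mu$ then $\lambda - \mu$ is undefined and every basis vector appearing in $d_\mu d_{-\lambda} w_i$ has the form $d_{-\gamma} w_{i+k}$ with $\#\gamma \le \#\lambda < \#\mu$, so in particular $d_\mu d_{-\lambda} w_i$ involves only $d_{-\gamma}$ with $\#\gamma < \#\mu$. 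If $\#\lambda = \#\mu = \#\lambda^{\max}$, then $\lambda - \mu$ is defined only when $\lambda = \mu = \lambda^{\max}$, in which case $\lambda - \mu = \emptyset$ and the distinguished term is $c\, w_{i + \#\mu}$ with $c \ne 0$; the remaining terms again only involve $d_{-\gamma}$ with $\#\gamma \le \#\lambda = \#\mu$, and among those the ones with $\#\gamma = \#\mu$ must (by the first bullet of the set in Lemma \ref{Lem:wj}, with $k=0$) have the same weight index $w_i$, not $w_{i+k}$ for $k > 0$.

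So after applying $d_\mu$ with $\mu = \lambda^{\max}$, the vector $d_\mu v$ decomposes as a sum of terms each of which is either (a) in $W_\xi$, coming from the diagonal pieces $\lambda = \lambda^{\max}$, or (b) of the form $d_{-\gamma} w_{\ell}$ with $\#\gamma = \#\lambda^{\max}$ and $\ell$ equal to the original index $i$ (these come from $\lambda \ne \lambda^{\max}$ with $\#\lambda = \#\lambda^{\max}$, where no $w_{i+k}$-shift with the full part-count occurs), or (c) terms with $\#\gamma < \#\lambda^{\max}$. To finish, I would argue that the ``dangerous'' type-(b) terms can be separated out: among the $\lambda$ of maximal part-count, pick $\lambda^{\max}$ to also be, say, lexicographically largest; then for $\lambda \ne \lambda^{\max}$ with $\#\lambda = \#\lambda^{\max}$, $\lambda - \mu$ is genuinely undefined and the $w_j$-shift needed to reach part-count $\#\lambda^{\max}$ is unavailable, so there is no contribution at part-count $\#\lambda^{\max}$ in the $w_j$ with the shifted index — this needs to be read off carefully from the structure of the set in Lemma \ref{Lem:wj}. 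Then $d_\mu v$ has a nonzero $W_\xi$-component coming from the genuinely diagonal terms, and if necessary one iterates or projects to extract a nonzero element of $V \cap W_\xi$.

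The main obstacle I anticipate is precisely this last bookkeeping step: making sure that when several distinct partitions of the maximal part-count appear in $v$, their images under $d_\mu$ do not conspire to cancel the $W_\xi$-component, and that terms of type (b) with $\#\gamma = \#\lambda^{\max}$ really do stay out of $W_\xi$ (so they can be ignored) rather than contaminating it. A clean way around this is to induct on the maximal part-count $N = \max\{\#\lambda : c_{\lambda, i} \ne 0\}$: if $N = 0$ then $v \in W_\xi$ already; if $N > 0$, apply $d_\mu$ for a well-chosen $\mu$ of the maximal part-count $N$ to produce a nonzero vector $d_\mu v \in V$ all of whose basis terms have part-count strictly less than $N$ — which requires showing $d_\mu v \ne 0$, the crux — and then invoke the inductive hypothesis. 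Establishing $d_\mu v \ne 0$ reduces, via Lemma \ref{Lem:wj}, to a linear-independence argument on the leading terms, and that is where the real work lies.
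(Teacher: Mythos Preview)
Your overall strategy---hit $v$ with a well-chosen element of $U(\n^+)$ to push it into $W_\xi$---is exactly the paper's, but your choice of invariant creates a real obstruction that you correctly sense but do not resolve.  You order partitions by the \emph{number of parts} $\#\lambda$; the paper orders by the \emph{size} $|\lambda|$.  This is not cosmetic.

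With your ordering, after applying $d_\mu$ (where $\#\mu = N$ is the maximal part-count appearing in $v$), Lemma \ref{Lem:wj} still allows terms $d_{-\gamma}w_j$ with $\#\gamma = N$ to survive---they come from the first set in that lemma, with the $w$-index unshifted.  So $d_\mu v$ need not have strictly smaller maximal part-count, and your proposed induction on $N$ does not close.  Your ``type (b)'' terms are precisely this leak, and the vague ``iterate or project'' does not fix it: iterating leaves you in the same situation.  (Your trichotomy (a)/(b)/(c) is also slightly off: the case $\lambda = \lambda^{\max}$ contributes terms of all three types, not just the $W_\xi$-piece.)

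The paper avoids the whole issue by taking $N = \max\{|\lambda| : a_{\lambda,i}\ne 0\}$ and applying $d_\gamma$ for a suitable $\gamma$ with $|\gamma|=N$.  The point is a weight argument that your approach cannot access: $d_\gamma d_{-\lambda} \in U(\g)_{|\gamma|-|\lambda|}$, so if $|\lambda|<|\gamma|$ then $d_\gamma d_{-\lambda}w_j = 0$ (positive weight, and $\n^+$ kills $w_j$), while if $|\lambda|=|\gamma|$ then $d_\gamma d_{-\lambda}w_j \in U(\h)w_j \subseteq W_\xi$ automatically.  Thus a \emph{single} application of $d_\gamma$ lands $v$ in $W_\xi$---no induction, no residual terms.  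Nonvanishing is then handled by a secondary choice: among $(\gamma,j)$ with $|\gamma|=N$ and $a_{\gamma,j}\ne 0$, take $\#\gamma + j$ maximal; Lemma \ref{Lem:wj} then guarantees the coefficient of $w_{j+\#\gamma}$ in $d_\gamma v$ is nonzero and unmatched by any other term.

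In short: switch your primary invariant from $\#\lambda$ to $|\lambda|$, use the weight-zero observation to land in $W_\xi$ in one step, and then use $\#\gamma + j$ only to certify nonvanishing.
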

\begin{proof}
If $v \in V$, then 
$$v=\sum_{\genfrac{}{}{0pt}{}{\lambda \in \p}{i \ge 0}} a_{\lambda, i} d_{-\lambda} w_i$$
for some $a_{\lambda, i} \in \cc$.  Let $N$ be maximal such that $a_{\lambda, i} \neq 0$ for some $i \ge 0$ and $|\lambda|=N$.  Now choose $\gamma, j$ such that $a_{\gamma,j} \neq 0$, $|\gamma|=N$, and $\#\gamma+j$ is maximal.  (Note that there may not be a unique choice of $\gamma$ and $j$.) 

Now,
\begin{eqnarray*}
d_{\gamma} v &=& a_{\gamma, j} d_{\gamma} d_{-\gamma} w_j +   \sum_{\ i < j}  a_{\gamma, i} d_{\gamma} d_{-\gamma} w_i \\
& &
+ \sum_{|\lambda| =N, \lambda \neq \gamma}  a_{\lambda, i} d_{\gamma} d_{-\lambda} w_i 
+  \sum_{|\lambda| <N}  a_{\lambda, i}d_{\gamma}  d_{-\lambda} w_i. 
\end{eqnarray*}

The last sum is zero since $d_\gamma d_{-\lambda} w_i = 0$ whenever $|\gamma|>|\lambda|$.  For the other terms, note that if $|\gamma|=|\lambda|$, then $d_{\gamma} d_{-\lambda} \in U(\g)_0$.  (Here, $U(\g)_0$ represents the $0$-weight space of $U(\g)$ under the adjoint action of $\h$.)  Since $d_n w_j=0$ for $n>0$, the PBW Theorem implies that $d_{\gamma} d_{-\lambda} w_j = x w_j$ for some $x \in U(\h)$.  In this case, $d_{\gamma} d_{-\lambda} w_j \in W_\xi$.  Combining this with Lemma \ref{Lem:wj}, we have 
\begin{align}\label{eqn:submodMxiFinite}
 \sum_{\ i < j}  a_{\gamma, i} d_{\gamma} d_{-\gamma} w_i + \sum_{|\lambda| =N, \lambda \neq \gamma}  a_{\lambda, i} d_{\gamma} d_{-\lambda} w_i  \in \, {\rm span}_\cc \{ w_k \mid k< j+ \#\gamma \}.
\end{align}
Similarly, Lemma \ref{Lem:wj} implies that the coefficient of $w_{j+\#\gamma}$ in $d_{\gamma} d_{-\gamma} w_j $ is nonzero, and this term is not cancelled by any of the terms in (\ref{eqn:submodMxiFinite}).
\end{proof}

\begin{prop}\label{lem:simplesInN(0)}
If $V \in {\rm Ob}(\W(0))$ is simple, then $V\cong L(0, \xi, h)$ for some $\xi, h \in \cc$.
\end{prop}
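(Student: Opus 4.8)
The plan is to produce an honest lowest weight vector inside the simple module $V$ and then invoke the classification of simple lowest weight modules. As observed in the paragraph preceding the statement, Theorem \ref{thm:Wdecomp}(iv) supplies a nonzero $w\in V$ with $\n^+w=0$, and $w$ generates $V$ since $V$ is simple; the task is to replace $w$ by a weight vector. Since $V$ is cyclic it is countable-dimensional over $\cc$ (as $U(\g)$ has countable dimension), so a standard Schur's-lemma argument forces the central element $z$ to act on $V$ by a scalar $\xi\in\cc$. The universal property of $M(0,\xi)$ then yields a surjection $\phi\colon M(0,\xi)\twoheadrightarrow V$ with $\phi(w_0)=w$; write $\overline{w}_i=\phi(w_i)$ and $\overline{W}_\xi=\phi(W_\xi)=\mathrm{span}_\cc\{\overline{w}_i\mid i\ge 0\}$.

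The key step is to show that $\overline{W}_\xi$ is \emph{finite-dimensional}. First, $M(0,\xi)$ is not simple: for each $h\in\cc$ the universal property gives a surjection $M(0,\xi)\twoheadrightarrow M(0,\xi,h)$ taking $w_0$ to the canonical generator, and this is not injective because $w_0,w_1,w_2,\dots$ are linearly independent in $M(0,\xi)$ while their images all lie in a one-dimensional subspace. Hence $\ker\phi$ is a nonzero submodule of $M(0,\xi)$, and Lemma \ref{Lem:submodMxi} produces a nonzero element $u=\sum_{i=0}^{m}c_iw_i\in\ker\phi\cap W_\xi$ with $c_m\neq 0$. Applying $\phi$ gives $\overline{w}_m=-c_m^{-1}\sum_{i=0}^{m-1}c_i\overline{w}_i$, and since $d_0\overline{w}_i=\overline{w}_{i+1}$, applying $d_0$ repeatedly shows $\overline{w}_k\in\mathrm{span}_\cc\{\overline{w}_0,\dots,\overline{w}_{m-1}\}$ for every $k\ge 0$. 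Thus $\dim_\cc\overline{W}_\xi\le m$.

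Now $\overline{W}_\xi$ is a nonzero finite-dimensional subspace of $V$, stable under $d_0$, on which $\n^+$ acts by $0$ and $z$ acts by $\xi$ (each $\overline{w}_i$ inherits these from $w_i$). Since $\cc$ is algebraically closed, $d_0$ has an eigenvector $0\neq v\in\overline{W}_\xi$, say $d_0v=hv$, and then $v$ is a lowest weight vector of weight $(\xi,h)$. Therefore $U(\g)v$ is a nonzero submodule of $V$, so $U(\g)v=V$ by simplicity, and the universal property of the Verma module gives a surjection $M(0,\xi,h)\twoheadrightarrow V$. As $V$ is simple, it is the unique simple quotient of $M(0,\xi,h)$, i.e. $V\cong L(0,\xi,h)$.

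The main obstacle is precisely the finite-dimensionality of $\overline{W}_\xi$. Locally at a $\psi=0$ Whittaker vector nothing distinguishes $d_0$ from the shift operator $w_i\mapsto w_{i+1}$ on $W_\xi\subseteq M(0,\xi)$, which has no eigenvectors at all, so simplicity of $V$ must be used at exactly this point: it is what forces $\ker\phi\neq 0$, and Lemma \ref{Lem:submodMxi} then collapses the $\overline{w}_i$ into a finite-dimensional span on which $d_0$ can finally be diagonalized.
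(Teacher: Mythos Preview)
Your proof is correct and follows essentially the same route as the paper: obtain a $\psi=0$ Whittaker generator, let $z$ act by a scalar via Schur, surject $M(0,\xi)$ onto $V$, show $M(0,\xi)$ is not simple so the kernel is nonzero, invoke Lemma~\ref{Lem:submodMxi} to find a nonzero polynomial in $d_0$ killing the generator, and then extract a $d_0$-eigenvector. The only real difference is how non-simplicity of $M(0,\xi)$ is verified: the paper uses the fact that $d_{-1}v^+$ is a second lowest weight vector in $M(0,\xi,0)$, whereas you observe directly that the surjection $M(0,\xi)\twoheadrightarrow M(0,\xi,h)$ collapses the independent vectors $w_0,w_1,\dots$ into a line; your argument is slightly more self-contained, but the two are equivalent in spirit.
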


\begin{proof}
It is enough to find a lowest weight vector: $0 \neq v_0^+ \in V$ so that $\n^+$ annihilates $v^+_0$ and $d_0, z$ act on $v^+_0$ by scalars.  By Theorem \ref{thm:Wdecomp}(iv), $V$ is generated by a vector $v_0$ that is annihilated by $\n^+$.  Moreover, since $V$ is simple, $z$ acts on $V$ by a scalar.  (Cf. \cite[Ex.~2.12.28]{rowen:rt88} for an appropriate generalization of Schur's Lemma.)  If some polynomial $(d_0 - a_1)(d_0 - a_2) \cdots (d_0 - a_k)$ in $d_0$ (for $a_1, \ldots, a_k \in \cc$) of minimal degree anihilates $v_0$, then clearly $v_0^+ = (d_0 - a_2) \cdots (d_0 - a_k)v_0$ is a nonzero lowest weight vector.  Therefore it is sufficient to show that there is some nonzero polynomial in $d_0$ that annihilates $v_0$.

Define $w_0 = 1 \otimes 1 \in M(0,\xi)$, and $v^+ = 1 \otimes 1\in M(0,\xi,0)$ (a lowest weight vector).  It is straightforward to verify that $d_{-1} v^+$ is also a lowest weight vector and thus $M(0,\xi, 0)$ is not simple.  Since there is a surjective module homomorphism $\varepsilon_0 : M(0,\xi) \to M(0,\xi,0)$ determined by $w_0 \mapsto v^+$, it must be that $M(0,\xi)$ is not simple.  Similarly, there is a surjective module homomorphism $\varepsilon: M(0,\xi) \rightarrow V$ defined by $w_0 \mapsto v_0$.  The kernel of this map is necessarily nonzero since $V$ is (by assumption) simple and $M(0,\xi)$ is not.  Let $S = \ker \varepsilon \neq 0$ so that $V \cong M(0,\xi) / S$.  

Let $W_\xi$ be as in (\ref{eqn:defOfW}), and observe that $S \cap W_\xi \neq 0$ by Lemma \ref{Lem:submodMxi}.  This means that there exists a nonzero vector in $S$ of the form $q(d_0)w_0$ for some polynomial $q$.  But then we see that 
$$0 = \varepsilon (q(d_0)w_0) = q(d_0) \varepsilon (w_0) = q(d_0)v_0,$$
as desired. 
\end{proof}

\section{The category $\W_f$} \label{sec:categoryWf}
Let $\W_f$ be the full subcategory of $\W$ containing $\g$-modules with finite composition length, and define $\W_f(\psi)=\W_f \cap \W(\psi)$.
Restricting to the subcategory $\W_f$ in this section allows us to use the center $Z(\g)$ to better describe modules in the category. In some cases we further restrict to $\W_f(\psi)$ where $\psi \neq 0$.  However, it follows from Proposition \ref{lem:simplesInN(0)} that $\W_f(0)$ is a subcategory of the category $\mathcal{O}$, about which much is already known for the Virasoro algebra.

Note that any module in $\W_f$ is finitely generated and has a locally finite action of $Z(\g)$.   (A generalization of Schur's Lemma (see \cite[Ex.~2.12.28]{rowen:rt88}) implies that $Z(\g)$ acts by a scalar on any simple subquotient of $V \in \W_f)$.  Therefore, the following proposition shows that $\W_f$ decomposes by central characters. This decomposition is the basis for much of the work in this section.

\begin{prop} \label{prop:zaction}
Let $V \in {\rm Ob} (\W)$, and assume that the action of $Z(\g)\cong \cc[z]$ is locally finite.  Then, for $\xi \in \cc \cong {\rm Hom}_{\rm alg}(Z(\g),\cc)$, 
$$V_\xi = \{ v \in V \mid (z-\xi)^k v=0 \ \mbox{for $k>>0$} \}$$
is a $\g$-module; and  $$V = \bigoplus_{\xi \in \cc} V_\xi.$$
Moreover, if $V$ is finitely generated, then $V_{\xi}$ is finitely generated and has a finite filtration $0 = V_{\xi, 0} \subseteq V_{\xi, 1} \subseteq V_{\xi, 2} \subseteq \cdots \subseteq V_{\xi, k} = V$ such that $z$ acts by $\xi$ on $V_{\xi, i+1} / V_{\xi, i}$ for each $0 \leq i <k$, where 
$$V_{\xi, i} = \{ v \in V_\xi \mid (z - \xi)^iv = 0\}.$$
\end{prop}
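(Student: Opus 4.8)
The plan is to establish the decomposition $V = \bigoplus_\xi V_\xi$ first, using only that $Z(\g) \cong \cc[z]$ acts locally finitely, and then to prove the finite-generation and finite-filtration statements under the extra hypothesis that $V$ is finitely generated. For the decomposition, I would first check that each $V_\xi$ is a $\g$-submodule: since $z$ is central, $(z-\xi)^k(g v) = g\,(z-\xi)^k v$ for any $g \in \g$, so $g v \in V_\xi$ whenever $v \in V_\xi$. For the direct-sum statement, fix $v \in V$; by local finiteness the subspace $\cc[z]v$ is finite-dimensional, so $z$ acts on it as an operator with a minimal polynomial that factors over $\cc$ as $\prod_i (t - \xi_i)^{a_i}$ with distinct $\xi_i$. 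The Chinese Remainder Theorem (equivalently, the primary decomposition of the $\cc[z]$-module $\cc[z]v$) writes $v = \sum_i v_i$ with $v_i \in V_{\xi_i}$, giving $V = \sum_\xi V_\xi$; directness follows because eigenvalues are distinct, so $V_\xi \cap \sum_{\nu \neq \xi} V_\nu = 0$ (a nonzero vector in the intersection would be killed by $(z - \xi)^k$ and by a polynomial coprime to $(t-\xi)$, forcing it to be $0$).

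Next I would handle the finitely generated case. Since $V = \bigoplus_\xi V_\xi$ and $V$ is finitely generated over $U(\g)$, each generator lies in a finite sum of the $V_\xi$'s, so only finitely many $V_\xi$ are nonzero and $V$ is a finite direct sum; moreover each $V_\xi$, being a direct summand of a finitely generated module over the (left) Noetherian-enough situation — actually I would argue directly: $V_\xi$ is a quotient of $V$ via the projection $V \to V_\xi$ along the decomposition, which is a $\g$-module map, hence $V_\xi$ is finitely generated. (Note the statement's filtration ends at $V_{\xi,k} = V$, which should read $V_{\xi,k} = V_\xi$; I would state it correctly as $V_{\xi,k} = V_\xi$.) Now the subspaces $V_{\xi,i} = \{v \in V_\xi : (z-\xi)^i v = 0\}$ form an increasing chain of $\g$-submodules (again because $z$ is central, $(z-\xi)^i$ commutes with the $\g$-action), with $V_{\xi,0} = 0$, and $z$ acts on each quotient $V_{\xi,i+1}/V_{\xi,i}$ by $\xi$ by construction. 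The only remaining point is that this chain stabilizes at a finite stage with union $V_\xi$: since $V_\xi$ is finitely generated, pick generators $u_1, \dots, u_m$; each $u_j$, lying in $V_\xi$, is annihilated by some power $(z-\xi)^{n_j}$, and since $(z-\xi)^n$ is central its annihilator submodule contains all of $U(\g)u_j$ once it contains $u_j$... more carefully, $(z-\xi)^n u_j = 0$ implies $(z-\xi)^n (g u_j) = g (z-\xi)^n u_j = 0$, so taking $n = \max_j n_j$ we get $(z-\xi)^n V_\xi = 0$, i.e. $V_{\xi,n} = V_\xi$; set $k = n$.

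The main obstacle — though it is more of a bookkeeping subtlety than a genuine difficulty — is making sure the local-finiteness hypothesis is used correctly to get the primary decomposition pointwise and then assembling these pointwise decompositions into an honest internal direct sum of submodules without circularity; the key leverage is simply that $z$ is central, so every $\cc[z]$-invariant subspace that is a union of generalized eigenspaces is automatically $\g$-stable. A secondary point to be careful about is the passage from "each generator is annihilated by a power of $(z-\xi)$" to "$V_\xi$ itself is annihilated by a uniform power," which again rides entirely on the centrality of $z$. Everything else — CRT, direct-sum bookkeeping, the fact that a quotient of a finitely generated module is finitely generated — is routine, so I would present those steps tersely and spend the words on the centrality arguments and on correcting the endpoint of the filtration to $V_{\xi,k} = V_\xi$.
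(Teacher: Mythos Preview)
Your proposal is correct and follows essentially the same approach as the paper: centrality of $z$ to get that each $V_\xi$ is a submodule, local finiteness of $\cc[z]v$ to obtain the primary decomposition pointwise, the direct-summand/quotient argument for finite generation of $V_\xi$, and finite generation plus centrality to show the filtration terminates. You also correctly caught the typo in the statement: the filtration should end at $V_{\xi,k}=V_\xi$, not $V$.
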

\begin{proof}
It is clear that $V_{\xi}$ is a submodule of $V$ and that the sum $\sum_{\xi \in \cc} V_\xi$ is direct. It remains to show that every element $v \in V$ can be written as a sum $v = \sum_\xi v_\xi$ with $v_\xi \in V_\xi$.  This is a standard argument that follows from the fact that $\cc[z]v$ is finite-dimensional, and thus $\ann_{\cc[z]}(v) \neq 0$.


Now suppose that $V$ is finitely generated.  Since the submodule $V_\xi$ is a direct summand of $V$ and thus a homomorphic image of $V$, it follows that $V_\xi$ is also finitely generated.  For $i \in \z_{>0}$, let 
$$V_{\xi, i} = \{ v \in V_{\xi} \mid (z - \xi)^iv = 0\}.$$
Then the chain $0 \subseteq V_{\xi, 1} \subseteq V_{\xi, 2} \subseteq \cdots$ is a filtration of $V_{\xi}$.  Because $V$ is finitely generated, there is some $k$ such that $V_{\xi, k} = V_\xi$, and thus the filtration is finite.
\end{proof}
Note that even if $V \in {\rm Ob}( \mathcal W(\psi))$ for some $0 \neq \psi (\n^+/[\n^+, \n^+])^*$, the quotients $V_{\xi, i+1} / V_{\xi, i}$ in Proposition \ref{prop:zaction} may not be simple.

In light of Proposition \ref{prop:zaction}, we  define $\W_f (\psi, \xi)$, $\xi \in \cc$,  as the full subcategory of $\W_f(\psi)$ consisting of all objects $V \in {\rm Ob} (\W_f(\psi))$ so that $V=V_{\xi}$. We further study the structure of objects in $\W_f(\psi, \xi)$ in Section \ref{sec:modulesInW_f(psi)}.


\subsection{Module homomorphisms in Whittaker Categories} \label{sec:homomorphisms}
We wish to study ${\rm Hom}_\g (V,W)$, where $V,W \in {\rm Ob}(\mathcal \W_f)$.   Using Theorem \ref{thm:Wdecomp}, we have that
\begin{eqnarray*}
{\rm Hom}_\g (V,W) &=& {\rm Hom} (\bigoplus_\psi V^\psi,  \bigoplus_\eta W^\eta) \\
&=& \bigoplus_{\psi, \eta} {\rm Hom}_\g (V^\psi, W^\eta) \\
&=&\bigoplus_{\psi} {\rm Hom}_\g (V^\psi, W^\psi)
\end{eqnarray*}
Therefore, it is enough to fix $\psi \in (\n^+/[\n^+,\n^+])^*$ and consider ${\rm Hom}_\g (V, W)$ for $V, W \in {\rm Ob}(\mathcal \W_f(\psi))$. 
Proposition \ref{prop:Whittakerhomomorphism} characterizes the $\g$-module homomorphisms between Whittaker modules in $\W_f(\psi)$.  

\begin{lem} \label{lem:Whittakerhomomorphism}
Let $0 \neq \psi \in (\n^+/[\n^+,\n^+])^*$.  Suppose $V, W \in {\rm Ob} (W_f(\psi))$ are Whittaker modules with cyclic Whittaker vectors $v \in V$, $w \in W$.  If $\varphi : V \to W$ is a nonzero $\g$-module homomorphism, then there exists $r(z) \in Z(\g)$ such that $\varphi (v) = r(z) w$ and $r(z) \ann_U(v) \subseteq \ann_U(w)$.  Furthermore, for any $r(z) \in Z(\g)$ with $r(z) \ann_U(v) \subseteq \ann_U(w)$, there is a unique map $V \to W$ with $v \mapsto r(z)w$.  
\end{lem}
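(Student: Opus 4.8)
The plan is to analyze a homomorphism $\varphi : V \to W$ by tracking where it sends the cyclic Whittaker vector $v$. First I would recall that $V$ is a Whittaker module with cyclic Whittaker vector $v$, so as a $U(\g)$-module $V \cong U/\ann_U(v)$, and similarly $W \cong U/\ann_U(w)$. Any $\g$-module map $\varphi : V \to W$ is determined by $\varphi(v)$, and conversely an element $\bar{u} \in W$ defines such a map $v \mapsto \bar{u}$ precisely when $\ann_U(v)$ annihilates $\bar{u}$, i.e. $\ann_U(v)\, \bar u = 0$. So the content of the lemma is really (a) that $\varphi(v)$ can be taken to lie in $Z(\g)w$, and (b) the translation of the well-definedness condition into the stated containment $r(z)\ann_U(v) \subseteq \ann_U(w)$.

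For step (a): since $v$ is a $\psi$-Whittaker vector, $x v = \psi(x) v$ for $x \in \n^+$, so $x\varphi(v) = \psi(x)\varphi(v)$, meaning $\varphi(v)$ is again a $\psi$-Whittaker vector in $W$. Now I would invoke the structure of Whittaker vectors in a Whittaker module of type $\psi \neq 0$: by the results of \cite{OW2008} recalled above (Propositions \ref{cor:SimpleL_PsiXi} and \ref{thm:generalV}), together with Proposition \ref{prop:zaction}, $W$ decomposes as a finite direct sum of Whittaker modules, and on each summand the space of Whittaker vectors is controlled by the $Z(\g)$-action. The key point is that the space of $\psi$-Whittaker vectors of $W$ is exactly $Z(\g)w$ — equivalently $\cc[z]w$ — because $W \cong U/\ann_U(w)$ and any Whittaker vector in such a module, by PBW with respect to the triangular decomposition and the fact that $\n^+$ acts by the character $\psi \neq 0$, must be a polynomial in $z$ applied to $w$. (This is essentially the computation underlying Theorem 3.6/3.8 of \cite{OW2008}; for $\psi \neq 0$ the operators $d_i - \psi(d_i)$ for $i \gg 0$ are "locally nilpotent in a strong sense" and pin down Whittaker vectors to $\cc[z]w$.) Thus $\varphi(v) = r(z)w$ for some $r(z) \in Z(\g) = \cc[z]$.

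For the compatibility condition: $\varphi$ is well-defined as a map out of $V \cong U/\ann_U(v)$ exactly when $\ann_U(v)$ kills $\varphi(v) = r(z)w$. Since $z$ is central, $\ann_U(v)\, r(z) w = r(z)\, \ann_U(v)\, w$, and this is $0$ in $W$ iff $r(z)\ann_U(v) \subseteq \ann_U(w)$ (recalling $W \cong U/\ann_U(w)$, so "being $0$ in $W$" means "lying in $\ann_U(w)$"). This gives both the necessity (applied to the given $\varphi$) and the sufficiency (for any such $r(z)$, the assignment $v \mapsto r(z)w$ extends uniquely to a $\g$-module map, uniqueness being immediate from $v$ generating $V$).

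The main obstacle is step (a) — justifying that every $\psi$-Whittaker vector of $W$ lies in $\cc[z]w$. This requires care because $W$ need not be simple and need not have $z$ acting by a scalar; I would handle it by first reducing to the case $W \in \W_f(\psi,\xi)$ via the $Z(\g)$-decomposition of Proposition \ref{prop:zaction} (a homomorphism respects this decomposition since it is $Z(\g)$-equivariant), and then on each such block invoking the explicit description of Whittaker vectors from \cite{OW2008}, where one writes a general element of $W$ in a PBW basis $d_{-\lambda}\, p(z, d_0)\, w$ and shows the Whittaker condition forces all $d_{-\lambda}$ with $\lambda \neq \emptyset$ and all dependence on $d_0$ to drop out. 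Everything else is bookkeeping with the identification $V \cong U/\ann_U(v)$ and the centrality of $z$.
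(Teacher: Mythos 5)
Your proposal is correct and follows essentially the same route as the paper: the paper's proof also rests on the two facts that the image of a Whittaker vector is a Whittaker vector and that ${\rm Wh}_\psi(W) = \cc[z]w$ (citing \cite[Corollary 5.2]{OW2008} for the latter, which you re-derive in sketch), with the annihilator condition then being the standard well-definedness criterion for maps out of the cyclic module $U/\ann_U(v)$. The only difference is that you supply more detail than the paper's two-line argument, including an unnecessary but harmless reduction to the blocks of Proposition \ref{prop:zaction}.
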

Although they are not in the category $\mathcal W_f$, this lemma holds for the universal Whittaker modules defined in \cite[Sec. ~2]{OW2008}.
\begin{proof}
\cite[Corollary 5.2]{OW2008} states that the set of Whittaker vectors in $W$ is given by ${\rm Wh}_\psi (W)=S(z)w$. The lemma follows from this and the fact that the homomorphic image of a Whittaker vector must be a Whittaker (possibly equal to 0).
\end{proof}

\begin{prop}  \label{prop:Whittakerhomomorphism}
Let $V, W \in {\rm Ob}(W_f(\psi))$ be Whittaker modules with cyclic Whittaker vectors $v \in V$, $w \in W$.  Let $\ann_{Z(\g)}(v) = \left< p(z) \right>$ and $\ann_{Z(\g)}(w) = \left< q(z) \right>$. Then

$$
\Hom_{\g} (V, W) \cong \cc [z] / \left< gcd (p(z), q(z))\right>
$$
where $\overline{s}(z)  \in \cc[z]/ \left< gcd (p(z), q(z))\right>$ defines a homomorphism $\phi_s: V \rightarrow W$ given by 
$$v \mapsto s(z) \cdot \frac{q(z)}{gcd(p(z), q(z))} w.$$
\end{prop}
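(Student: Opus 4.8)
The plan is to use Lemma~\ref{lem:Whittakerhomomorphism} to reduce the computation of $\Hom_\g(V,W)$ to a problem about polynomials in $\cc[z]$. First I would recall the setup from that lemma: any nonzero $\varphi: V \to W$ is determined by $\varphi(v) = r(z)w$, and $\varphi$ exists if and only if $r(z)\ann_U(v) \subseteq \ann_U(w)$; moreover this last condition can be intersected with $Z(\g)$, so at minimum we need $r(z)\ann_{Z(\g)}(v) \subseteq \ann_{Z(\g)}(w)$, i.e.\ $r(z)p(z) \in \langle q(z)\rangle$. The first real step is to show these two conditions are equivalent, i.e.\ that controlling the central annihilators suffices to control the full annihilator. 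Since $W$ is a Whittaker module with cyclic Whittaker vector $w$ and lies in $\W_f(\psi)$, Proposition~\ref{thm:generalV} (applied to $V$ and to $W$) shows each decomposes as a direct sum of Whittaker modules whose composition factors are the various $L(\psi,\xi_i)$; the key structural input is that $\ann_U(v)$ is generated (as a left ideal, modulo $\ann_U$ of the universal Whittaker module) by $\ann_{Z(\g)}(v) = \langle p(z)\rangle$, and similarly for $w$. This is where \cite[Corollary 5.2]{OW2008} and the results of \cite{OW2008} on the structure of $\ann_U$ of a Whittaker vector enter: the universal Whittaker module $\tilde{M}_\psi$ has $U(\g) \cong U(\n^-) \otimes S(z) \otimes \cc_\psi$ as a module, and $V \cong \tilde M_\psi / \tilde M_\psi p(z)$, $W \cong \tilde M_\psi / \tilde M_\psi q(z)$, so a map $V \to W$ sending $v \mapsto r(z)w$ is well-defined precisely when $r(z)p(z) \in \langle q(z)\rangle$ inside $S(z) = \cc[z]$, which is a commutative statement.

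Granting that, the second step is pure commutative algebra in $\cc[z]$. The condition $r(z)p(z) \in \langle q(z)\rangle$ means $q(z) \mid r(z)p(z)$. Write $g(z) = \gcd(p(z),q(z))$, $p(z) = g(z)p_1(z)$, $q(z) = g(z)q_1(z)$ with $\gcd(p_1,q_1) = 1$. Then $q(z) \mid r(z)p(z)$ becomes $g q_1 \mid r g p_1$, i.e.\ $q_1 \mid r p_1$, and since $\gcd(p_1,q_1)=1$ this forces $q_1 \mid r$, so $r(z) = s(z) q_1(z) = s(z)\, q(z)/g(z)$ for some $s(z) \in \cc[z]$. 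Thus the set of admissible leading coefficients $r(z)$ is exactly $\{s(z)\, q(z)/\gcd(p(z),q(z)) : s(z) \in \cc[z]\}$. The third step is to identify when two such $r(z)$'s give the same homomorphism: $\varphi_s = \varphi_{s'}$ iff $(s - s')(z)\, (q(z)/g(z))\, w = 0$ in $W$, iff $q(z) \mid (s-s')(z)\, q(z)/g(z)$, iff $g(z) \mid (s - s')(z)$. Hence $\varphi_s$ depends only on $s(z) \bmod \langle g(z)\rangle$, and the assignment $\overline{s}(z) \mapsto \varphi_s$ is a well-defined bijection $\cc[z]/\langle \gcd(p,q)\rangle \to \Hom_\g(V,W)$.

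Finally I would check this bijection is $\cc$-linear (immediate, since $\varphi_s$ is built $\cc$-linearly from $s$ and the construction in Lemma~\ref{lem:Whittakerhomomorphism} respects addition and scalar multiplication of the target images) to upgrade it to a vector-space isomorphism, matching the stated formula $v \mapsto s(z)\cdot \tfrac{q(z)}{\gcd(p(z),q(z))}\, w$. One degenerate case to dispose of: if $\ann_{Z(\g)}(v) = 0$ or $\ann_{Z(\g)}(w) = 0$, i.e.\ $p(z)$ or $q(z)$ is zero — but since $V, W \in \W_f$ have finite composition length, Proposition~\ref{thm:generalV}'s hypothesis $\ann_Z(w) \ne 0$ is automatic (an infinite-length situation would arise otherwise), so $p(z), q(z) \ne 0$ and $\gcd$ is a genuine nonzero polynomial; the statement is then literally as written.

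I expect the main obstacle to be the first step: cleanly justifying that $r(z)\ann_U(v) \subseteq \ann_U(w)$ is equivalent to the purely central condition $q(z) \mid r(z)p(z)$. This requires knowing that $\ann_U(v)$ "is controlled by" $\ann_{Z(\g)}(v)$ in the precise sense that $V$ is the quotient of the universal Whittaker module by the left ideal generated by $p(z)$ — a fact that is implicit in \cite{OW2008} and in the remark following Lemma~\ref{lem:Whittakerhomomorphism} (which notes the lemma applies to the universal Whittaker modules), but needs to be invoked carefully. Everything after that reduction is elementary gcd arithmetic in a PID.
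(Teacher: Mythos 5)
Your proposal is correct and follows essentially the same route as the paper: both reduce via Lemma \ref{lem:Whittakerhomomorphism} to the condition $r(z)\ann_U(v) \subseteq \ann_U(w)$, show this is equivalent to $r(z)p(z) \in \left< q(z) \right>$ using the structure $\ann_U(v) = Up(z) + I$ from \cite[Corollary 5.1]{OW2008}, and then finish with gcd arithmetic in $\cc[z]$. Your explicit treatment of the degenerate case $p(z) = 0$ or $q(z) = 0$ is a small bonus the paper leaves implicit.
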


\begin{proof}
Using the notation of Lemma \ref{lem:Whittakerhomomorphism}, we first show that $r(z) \ann_U (v) \subseteq \ann_U(w)$ if and only if $ r(z) p(z) \in \left< q(z) \right>$. Since $\ann_U(v) = Up(z) + I$ and $\ann_U(w) = Uq(z) +I$ for some left ideal $I$ (see \cite[Corollary 5.1]{OW2008}), it follows that $\left< r(z) p(z) \right> \subseteq \left< q(z) \right>$ implies $r(z) \ann_U (v) \subseteq \ann_U(w)$.  Conversely, if $r(z) \ann_U(v) \subseteq \ann_U(w)$, then clearly $r(z) p(z) \in \ann_U(w)$.  Since $r(z)p(z) \in Z(\g)$, we see that $r(z) p(z) \in \ann_U(w) \cap Z(\g) = \ann_{Z(\g)}(w) = \left< q(z) \right>$.  

Combining this with Lemma \ref{lem:Whittakerhomomorphism}, we have that all module homomorphisms $V \rightarrow W$ have the form $\phi_s$, where $\phi_s(v) = s(z) \cdot \frac{q(z)}{gcd(p(z), q(z))} w$ for some $s(z) \in \cc[z]$.  Since $q(z)$ is the unique monic polynomial of minimal degree in $\cc[z] \cap \ann_U(w)$, it is clear that $s(z)$ induces the 0 map if and only if $gcd(p(z), q(z))$ divides $s(z)$.  This implies that the surjective map $\cc[z] \to \Hom_{\g} (V, W)$ given by $s(z) \mapsto \phi_s$ has kernel $\left< gcd (p(z), q(z))\right>$.
\end{proof}

\begin{cor}
Let $V, W \in Ob (W_f(\psi))$ be Whittaker modules with cyclic Whittaker vectors $v \in V$, $w \in W$.   Write $\ann_{Z(\g)}(w)=\left< q(z) \right>$, and suppose $\varphi : V \to W$ is a $\g$-module homomorphism with $\varphi (v) = r(z) w$.  Then $\varphi$ is surjective if and only if $\gcd (r(z),q(z)) = 1$.
\end{cor}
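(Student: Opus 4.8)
The plan is to deduce this from Proposition \ref{prop:Whittakerhomomorphism} together with Lemma \ref{lem:Whittakerhomomorphism}. First I would record the setup: by Lemma \ref{lem:Whittakerhomomorphism}, any $\g$-module homomorphism $\varphi : V \to W$ is determined by $\varphi(v) = r(z)w$, and since $w$ generates $W$ as a $\g$-module, the image $\varphi(V) = U(\g)\varphi(v) = U(\g)r(z)w$. So $\varphi$ is surjective if and only if $U(\g)r(z)w = W = U(\g)w$. Thus the task reduces to showing $U(\g)r(z)w = U(\g)w$ if and only if $\gcd(r(z),q(z)) = 1$.

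For the ``if'' direction, suppose $\gcd(r(z),q(z)) = 1$. Then there exist $a(z), b(z) \in \cc[z]$ with $a(z)r(z) + b(z)q(z) = 1$. Since $q(z) \in \ann_{Z(\g)}(w)$, we have $q(z)w = 0$, hence $a(z)r(z)w = w$. As $a(z), r(z) \in Z(\g) \subseteq U(\g)$, this gives $w \in U(\g)r(z)w$, so $U(\g)r(z)w \supseteq U(\g)w$; the reverse inclusion is immediate since $r(z)w \in U(\g)w$ (using that $Z(\g)$ commutes with everything, or simply that $r(z)w \in W$). Hence $\varphi$ is surjective.

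For the ``only if'' direction, I would argue contrapositively: suppose $g(z) := \gcd(r(z),q(z))$ is nonconstant, and show $\varphi(V) \neq W$. The natural approach is to exhibit a nonzero quotient of $W$ on which $\varphi(V)$ maps to zero, or equivalently to find a simple quotient of $W$ not hit by $\varphi$. Since $g(z) \mid q(z)$ and $g(z)$ is nonconstant, $g(z)$ has a root $\xi_0 \in \cc$ which is one of the $\xi_i$ appearing in the factorization of $q(z)$; using Proposition \ref{thm:generalV} (or the structure of $\ann_U(w)$ from \cite[Cor.~5.1]{OW2008}), $W$ has $L(\psi,\xi_0)$ as a composition factor, and in fact has a quotient $\overline W$ annihilated by $(z-\xi_0)$ with $\overline w := w + \text{(ker)} \neq 0$ and $\overline W = U(\g)\overline w \cong$ a Whittaker module with $z$ acting by $\xi_0$. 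In the composite $V \xrightarrow{\varphi} W \to \overline W$, the image of $v$ is $r(\xi_0)\overline w = 0$ since $g(z) \mid r(z)$ forces $r(\xi_0) = 0$; hence $\varphi(V)$ lies in the kernel of $W \to \overline W$, a proper submodule, so $\varphi$ is not surjective. Concretely, to get the quotient $\overline W$ one can take $W / (z - \xi_0)^{m}W$ for suitable $m$ or pass to the summand $W_{\xi_0}$ of Proposition \ref{prop:zaction} and quotient by $(z-\xi_0)W_{\xi_0}$; the point is only that $r(\xi_0)=0$ kills the image.

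The main obstacle is the ``only if'' direction: one must be careful that the quotient $\overline W$ is genuinely nonzero and that $\overline w$ generates it, which relies on knowing $(z-\xi_0)$ does not already act invertibly on $W$ — i.e.\ that $\xi_0$ really is in the support of the $Z(\g)$-action on $W$, which is exactly the content of $g(z) \mid q(z)$ with $q(z)$ the minimal polynomial of $z$ on $w$. An alternative, cleaner route that avoids this is to invoke Proposition \ref{prop:Whittakerhomomorphism} directly: the map $\varphi = \phi_s$ corresponds to $\overline s(z) \in \cc[z]/\langle \gcd(p(z),q(z))\rangle$ with $r(z) = s(z)\,q(z)/\gcd(p(z),q(z))$, and surjectivity of $\phi_s$ is equivalent to $\phi_s$ being an epimorphism onto $W \cong U(\g)w$ with $\ann_{Z(\g)}$ of the image equal to $\langle q(z)\rangle$; chasing the annihilators shows this happens precisely when $s(z)\cdot\frac{q(z)}{\gcd(p(z),q(z))}$ and $q(z)$ have trivial gcd, i.e.\ when $r(z)$ and $q(z)$ do. I would present the direct Bezout argument for ``if'' and the quotient argument for ``only if,'' as these are the most transparent.
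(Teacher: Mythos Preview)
Your proposal is correct. The ``if'' direction via B\'ezout is exactly what the paper does. For the ``only if'' direction, however, you take a genuinely different route from the paper. The paper argues directly: if $\varphi$ is surjective then $w$ lies in the Whittaker module $U(\g)r(z)w$, and since $w$ is a Whittaker vector there, Corollary~5.2 of \cite{OW2008} forces $w = a(z)r(z)w$ for some $a(z) \in Z(\g)$; then $a(z)r(z) - 1 \in \ann_{Z(\g)}(w) = \langle q(z)\rangle$, which immediately gives $\gcd(r(z),q(z)) = 1$. Your contrapositive instead passes to a simple quotient $L(\psi,\xi_0)$ of $W$ with $\xi_0$ a common root of $r$ and $q$, and observes that $r(z)w$ dies there. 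Both arguments are valid; the paper's is shorter and exploits the precise description of Whittaker vectors in a Whittaker module, while yours is more module-theoretic and would adapt to settings where one knows the composition factors but not an explicit parametrization of Whittaker vectors. Your hedging about whether the quotient $\overline W$ is nonzero is unnecessary: Proposition~\ref{thm:generalV} guarantees $L(\psi,\xi_0)$ occurs as a composition factor (hence as a simple quotient of the appropriate summand) precisely because $\xi_0$ is a root of $q(z)$.
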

\begin{proof}
Note that ${\rm im} \, \varphi$ is the submodule generated by $r(z)w$, and recall that $Z(\g) \cong \cc [z]$.  If $\gcd (r(z),q(z)) = 1$, then there exist $a(z), b(z) \in Z(\g)$ such that $a(z)r(z) + b(z)q(z) = 1$.  Since $q(z)w = 0$, it follows that 
$$w = (a(z)r(z) + b(z)q(z))w = a(z)r(z)w,$$
and therefore $w \in Z(\g)r(z)w \subseteq U(\g) r(z)w$.

Conversely, if $\varphi$ is surjective, then $w \in {\rm im} \, \varphi$.  But this means that $w$ is a Whittaker vector in the Whittaker module generated by the Whittaker vector $r(z)w$.  By Corollary 5.2 of \cite{OW2008}, we may write $w = a(z)r(z)w$ for some $a(z) \in Z(\g)$.  This implies that $a(z)r(z) - 1\in \left< q(z) \right>$, and therefore $\gcd (r(z),q(z)) = 1$.
\end{proof}

\begin{cor}
Let $\psi \in (\n^+/[\n^+, \n^+])^*)$,  $\xi, \xi' \in \cc$, $V \in \W_f(\psi, \xi)$, $W \in \W_f(\psi, \xi')$.  Then ${\rm Hom}_{\g}(V,W)=0$ unless $\xi=\xi'$.   
\end{cor}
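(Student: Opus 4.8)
The plan is to argue directly from the definition of the subcategories $\W_f(\psi,\xi)$ via the local nilpotence of $z$, using centrality of $z$ to push the action through any homomorphism. So let $\varphi \in \Hom_\g(V,W)$ with $V \in \W_f(\psi,\xi)$ and $W \in \W_f(\psi,\xi')$, and suppose $\xi \neq \xi'$; the goal is to show $\varphi = 0$.

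First I would fix $v \in V$. Since $V = V_\xi$ in the notation of Proposition \ref{prop:zaction}, there is some $k>0$ with $(z-\xi)^k v = 0$. Because $z$ is central in $U(\g)$ and $\varphi$ is a $\g$-module homomorphism, we get
$$
(z-\xi)^k \varphi(v) = \varphi\big( (z-\xi)^k v \big) = 0,
$$
so $\varphi(v)$ lies in the kernel of the operator $(z-\xi)^k$ acting on $W$.

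The key point is then that $(z-\xi)$ acts invertibly on $W$. Indeed, $W = W_{\xi'}$, so for each $w \in W$ there is $l$ with $(z-\xi')^l w = 0$; writing $z - \xi = (z-\xi') + (\xi' - \xi)$ exhibits $z-\xi$ on the finite-dimensional space $\cc[z]w$ as the sum of a nilpotent operator and the nonzero scalar $\xi' - \xi$, hence as an invertible operator on $\cc[z]w$. Consequently $(z-\xi)^k$ is injective on $W$, and from $(z-\xi)^k \varphi(v) = 0$ we conclude $\varphi(v) = 0$. As $v \in V$ was arbitrary, $\varphi = 0$, which proves $\Hom_\g(V,W) = 0$ when $\xi \neq \xi'$.

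I do not expect a genuine obstacle here; the only point requiring a little care is the invertibility of $z - \xi$ on $W$, and even that is immediate once one observes that $z$ acts locally nilpotently modulo $\xi'$ on all of $W$ (as noted just before Proposition \ref{prop:zaction}, every module in $\W_f$ has a locally finite $Z(\g)$-action, and here it is locally $(z-\xi')$-nilpotent by definition of $\W_f(\psi,\xi')$). One could alternatively phrase the argument using Proposition \ref{prop:zaction} applied to $V$ itself: $\mathrm{im}\,\varphi$ is a quotient of $V = V_\xi$, hence a module on which $z-\xi$ is locally nilpotent, while it is also a submodule of $W = W_{\xi'}$, on which $z-\xi$ is invertible; therefore $\mathrm{im}\,\varphi = 0$.
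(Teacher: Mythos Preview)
Your argument is correct. The key observation---that $z-\xi$ acts locally nilpotently on $V$ and invertibly on $W$ when $\xi\neq\xi'$, forcing any $\g$-homomorphism to vanish---is exactly the standard generalized-eigenspace separation for a central element, and you have carried it out cleanly.

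The paper takes a different route: it invokes Proposition~\ref{prop:Whittakerhomomorphism}, which computes $\Hom_\g$ between Whittaker modules in terms of $\gcd(p(z),q(z))$, together with the fact that every nonzero object of $\W$ contains a Whittaker submodule. The idea there is that a nonzero $\varphi$ would produce a nonzero map between Whittaker modules whose annihilator polynomials are powers of $z-\xi$ and $z-\xi'$ respectively, and these have trivial $\gcd$. Your approach is more elementary: it avoids the machinery of Whittaker vectors entirely and uses only the centrality of $z$ and the definition of $V_\xi$. It is also slightly more robust, since Proposition~\ref{prop:Whittakerhomomorphism} (via Lemma~\ref{lem:Whittakerhomomorphism}) is stated for $\psi\neq 0$, whereas your argument goes through verbatim for $\psi=0$ as well, matching the generality of the corollary as stated.
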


\begin{proof}
This follows from Proposition \ref{prop:Whittakerhomomorphism} and the fact that every module in $\W$ contains a submodule that is a Whittaker module.
\end{proof}

\subsection{$Z$-semisimple Modules in $\W_f(\psi, \xi)$}\label{sec:modulesInW_f(psi)} \label{sec:ZSemisimpleModules}
By Proposition \ref{prop:zaction}, the study of $\W(\psi)$ reduces to the study of $\W(\psi, \xi)$ as long as the action of $Z$ on a given module is locally finite.  In Corollary \ref{cor:Z-locallyFinite}, we show that the condition that $Z$ acts locally finitely on $V$ is nearly equivalent to the condition that $V$ has finite composition length, and thus for the remainder of the paper we investigate $\W_f(\psi, \xi)$ for $\psi \neq 0$.

By assumption, any $V \in {\rm Ob} (\mathcal W_f(\psi, \xi))$ has a finite composition series, and we have seen that the corresponding simple quotients must be Whittaker modules.  Thus, the description of Whittaker modules in \cite{OW2008} provides some understanding of $\W_f(\psi, \xi)$.  Alternatively, we have that $z-\xi$ acts locally nilpotently on $V$, so $V$ also has a finite filtration by modules in $\W_f(\psi, \xi)$ where $z$ acts by $\xi$.   In this section we study such modules. 

For the following lemma, note that since $V \in {\rm Ob} (\mathcal W_f(\psi, \xi))$ is finitely generated and $\n^+$ acts locally finitely, $V$ is necessarily generated by a finite-dimensional $\n^+$-module.

\begin{lem}\label{lem:compSeriesN}
Fix $0 \neq \psi \in (\n^+/[\n^+, \n^+])^*$, $\xi \in \cc$, and suppose $V \in {\rm Ob} (\W(\psi, \xi))$ so that $z$ acts by $\xi$.  Let $N \subseteq V$ be a finite-dimensional $\n^+$-submodule such that $V = U(\g)N$.  If $n = \dim_\cc N$, then there exist vectors $v_1, \ldots, v_n \in N$ with the following properties:
\begin{enumerate}
\item $v_1 \in {\rm Wh}_\psi (V)$
\item For $1 \le k \le n$, the space ${\rm span}_\cc \{ v_1, \ldots, v_k \}$ is an $\n^+$-submodule of $N$.
\item For $1 \le k \le n$, $(d_i - \psi_i) v_k \in {\rm span}_\cc \{ v_1, \ldots, v_{k-1} \}$ \, ($i = 1, 2$).
\item For $1 \le k \le n$, the space $V_k := \sum_{i=1}^k U(\g) v_i$ is a $\g$-submodule of $V$, and the quotient $V_k / V_{k-1}$ is either trivial or is a simple Whittaker module with cyclic Whittaker vector $v_k + V_{k-1}$.  
\end{enumerate}
In particular, the composition length of $V$ is at most $\dim_\cc N$.
\end{lem}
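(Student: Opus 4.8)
The plan is to construct the vectors $v_1, \ldots, v_n$ by working inside the finite-dimensional $\n^+$-module $N$ and exploiting the fact that $\n^+$ acts on $N$ as a quasi-nilpotent Lie algebra, so that (by Lie's Theorem, as recorded in Lemma \ref{lem:quasiNil_1dim}) $N$ has a complete flag of $\n^+$-submodules. First I would observe that since $V \in \W(\psi)$, every nonzero $\n^+$-submodule of $N$ contains a nonzero $\psi$-Whittaker vector (Theorem \ref{thm:Wdecomp}(iv)); more precisely, the only simple $\n^+$-subquotients that can occur in $N$ are copies of $\cc_\psi$, since $N \subseteq V = V^\psi$. Therefore $N$ admits a filtration $0 = N_0 \subsetneq N_1 \subsetneq \cdots \subsetneq N_n = N$ by $\n^+$-submodules with $\dim N_k = k$ and each $N_k/N_{k-1} \cong \cc_\psi$. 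Choosing $v_k \in N_k \setminus N_{k-1}$ gives a basis $v_1, \ldots, v_n$ of $N$ satisfying (2) immediately, and (1) holds because $N_1 \cong \cc_\psi$ forces $v_1 \in {\rm Wh}_\psi(V)$. Statement (3) is just the assertion that $x$ acts on $N_k/N_{k-1}$ by $\psi(x)$ for $x \in \n^+$, specialized to the generators $x = d_1, d_2$ of $\n^+$; since $(d_i - \psi_i)v_k \in N_{k-1} = {\rm span}_\cc\{v_1, \ldots, v_{k-1}\}$, this is automatic.

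Next I would turn to (4). Set $V_k = \sum_{i=1}^k U(\g)v_i$; this is visibly a $\g$-submodule, and $V_n = U(\g)N = V$. The quotient $V_k/V_{k-1}$ is generated by the image $\bar v_k$ of $v_k$, and by (3) the element $\bar v_k$ is a $\psi$-Whittaker vector in $V_k/V_{k-1}$: indeed $(d_i - \psi_i)\bar v_k = 0$ in the quotient for $i = 1,2$, and since $d_1, d_2$ generate $\n^+$ this propagates to $(x - \psi(x))\bar v_k = 0$ for all $x \in \n^+$ (one needs the elementary fact that if $w$ is killed by $d_1 - \psi_1$ and $d_2 - \psi_2$ then it is killed by $d_i - \psi_i$ for all $i > 0$; this follows by induction on $i$ using the bracket relations, exactly as in the proof of Theorem \ref{thm:quotU+LocFin}). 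Hence $V_k/V_{k-1}$ is a Whittaker module of type $\psi$ on which $z$ acts by the scalar $\xi$. By Proposition \ref{cor:SimpleL_PsiXi}, if $\bar v_k \neq 0$ then $V_k/V_{k-1} \cong L(\psi, \xi)$ is simple; if $\bar v_k = 0$ then $v_k \in V_{k-1}$ and the quotient is trivial. This proves (4), and counting the nontrivial quotients in the chain $0 = V_0 \subseteq V_1 \subseteq \cdots \subseteq V_n = V$ gives $\ell(V) \le n = \dim_\cc N$, which is the final statement.

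The main obstacle I anticipate is verifying cleanly that a vector annihilated by $d_1 - \psi_1$ and $d_2 - \psi_2$ is a genuine $\psi$-Whittaker vector, i.e.\ that the relations on the two generators force the relations on all of $\n^+$. The subtlety is that $\n^+$ is infinite-dimensional and the action on $V$ is only locally finite, not locally nilpotent, so one cannot simply invoke nilpotency; instead one argues degree by degree: writing $d_i = \frac{1}{i-2}[d_1, d_{i-1}]$ for $i \ge 3$ (or a similar identity) and using $(d_1 - \psi_1)\bar v_k = (d_2 - \psi_2)\bar v_k = 0$ together with $[\n^+, \n^+] \subseteq \ker\psi$, one shows inductively that $(d_i - \psi_i)\bar v_k = 0$. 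A secondary point to handle carefully is the existence of the complete $\n^+$-flag on $N$ with all quotients $\cong \cc_\psi$: this requires knowing that $N$, lying in $V^\psi$, has no $\n^+$-subquotient isomorphic to $\cc_\nu$ for $\nu \neq \psi$, which is where Theorem \ref{thm:Wdecomp}(iv) and the definition of $V^\psi$ enter. Both obstacles are routine once the right formulation is in place, and neither requires machinery beyond what is already established in Sections \ref{sec:WhittakerModules} and \ref{sec:categoryW}.
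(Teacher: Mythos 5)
Your proposal is correct and follows essentially the same route as the paper: build a complete $\n^+$-flag of $N$ with successive quotients isomorphic to $\cc_\psi$ (using Theorem \ref{thm:Wdecomp}(iv) and Lemma \ref{lem:quasiNil_1dim} applied to successive quotients of $N$), take $v_k \in N_k \setminus N_{k-1}$, and then verify (1)--(4) directly, with the simplicity of the nontrivial quotients $V_k/V_{k-1}$ coming from Proposition \ref{cor:SimpleL_PsiXi}. You in fact supply more detail than the paper (which leaves the verification of (1)--(4) to the reader), including the justification that every simple $\n^+$-subquotient of $N \subseteq V^\psi$ is $\cc_\psi$; note only that the propagation argument from $d_1, d_2$ to all $d_i$ is unnecessary, since the flag already gives $(x-\psi(x))v_k \in N_{k-1} \subseteq V_{k-1}$ for every $x \in \n^+$.
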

\begin{proof} 
We first define the vectors $v_1, \ldots, v_n$.  By Theorem \ref{thm:Wdecomp}, there exists a nonzero $\psi$-eigenvector $v_1 \in N$, and thus $U(\n^+)v_1 = \cc v_1$.  In general, if $v_1, \ldots, v_{k-1}$ are defined, then we may regard ${\rm span}_\cc \{ v_1, \ldots, v_{k-1} \} \subseteq N$ as $\n^+$-submodules of $V$.  The $\n^+$-module $N / {\rm span}_\cc \{v_1, \ldots , v_{k-1} \}$ is finite-dimensional, and contains a simple submodule.  By Lemma \ref{lem:quasiNil_1dim}, the simple submodule is 1-dimensional with $\n^+$ action given by $\psi$.  In other words, there exists some $v_k \in N \setminus {\rm span}_\cc \{ v_1, \ldots, v_{k-1} \}$ such that $v_k + {\rm span}_\cc \{ v_1, \ldots, v_{k-1} \}$ is a nonzero $\psi$-eigenvector in $N/ {\rm span}_\cc \{ v_1, \ldots, v_{k-1} \}$.  Thus $v_1, \ldots, v_n$ exist by induction.  With $v_1, \ldots, v_n$ now defined, the remaining assertions are straightforward to verify.
\end{proof}

\begin{cor}\label{cor:Z-locallyFinite}
Let $0 \neq \psi \in (\n^+/[\n^+, \n^+])^*$ and let $V \in {\rm Ob} (\W(\psi))$.  Then $V \in {\rm Ob} (\W_f(\psi))$ if and only if $V$ is finitely generated and $Z(\g)$ acts locally finitely on $V$.
\end{cor}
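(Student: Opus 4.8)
The plan is to prove the two implications of Corollary \ref{cor:Z-locallyFinite} separately, with the forward direction being essentially immediate and the reverse direction being the substantive content.

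For the forward direction, suppose $V \in {\rm Ob}(\W_f(\psi))$. Then by definition $V$ has finite composition length, hence is finitely generated. For local finiteness of $Z(\g)$, I would invoke the generalization of Schur's Lemma (\cite[Ex.~2.12.28]{rowen:rt88}) to conclude that $Z(\g)$ acts by a scalar on each simple subquotient in a composition series of $V$; since the series is finite, an easy induction on composition length shows that $Z(\g) v$ is finite-dimensional for each $v \in V$. (Alternatively, since $Z(\g) \cong \cc[z]$ and $z$ acts by the scalar $\xi_i$ on the $i$-th composition factor, $(z - \xi_1)\cdots(z-\xi_m)$ annihilates $V$, where $m = \ell(V)$; this single polynomial relation makes $\cc[z]$-local finiteness transparent.)

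For the reverse direction, suppose $V$ is finitely generated and $Z(\g) \cong \cc[z]$ acts locally finitely. First apply Proposition \ref{prop:zaction} to decompose $V = \bigoplus_{\xi \in \cc} V_\xi$; since $V$ is finitely generated, only finitely many $V_\xi$ are nonzero, each $V_\xi$ is finitely generated, and each $V_\xi$ has a finite filtration whose successive quotients lie in $\W(\psi, \xi)$ with $z$ acting by $\xi$. It therefore suffices to show that each such quotient has finite composition length, and then finiteness for $V$ follows by additivity of composition length over the filtration and the direct sum. So reduce to the case $V \in {\rm Ob}(\W(\psi, \xi))$ with $z$ acting by $\xi$ and $V$ finitely generated. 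Now the remark preceding Lemma \ref{lem:compSeriesN} applies: a finitely generated module in $\W(\psi, \xi)$ on which $\n^+$ acts locally finitely is generated by a finite-dimensional $\n^+$-submodule $N$ (take $N$ to be $U(\n^+)$ applied to a finite generating set). Then Lemma \ref{lem:compSeriesN} gives $\ell(V) \le \dim_\cc N < \infty$, so $V \in {\rm Ob}(\W_f(\psi, \xi)) \subseteq {\rm Ob}(\W_f(\psi))$.

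I expect the main (minor) obstacle to be the bookkeeping in assembling the pieces: one must check that a finitely generated module in $\W(\psi)$ really is generated by a \emph{finite-dimensional} $\n^+$-module (which is exactly where local finiteness of $\n^+$ is used — apply $U(\n^+)$ to a finite generating set and use that each orbit is finite-dimensional), and one must verify that finite composition length is additive over the finite filtration from Proposition \ref{prop:zaction} and over the finite direct sum decomposition. None of these steps is deep, but they are the glue that connects the $Z$-action hypothesis to the combinatorial bound of Lemma \ref{lem:compSeriesN}. The essential mathematical input is entirely contained in Lemma \ref{lem:compSeriesN} and Proposition \ref{prop:zaction}; the corollary is the natural synthesis of the two.
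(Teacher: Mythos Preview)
Your proposal is correct and follows essentially the same route as the paper: reduce via Proposition~\ref{prop:zaction} to a single $V_\xi$, pass to the finite filtration with $z$ acting by $\xi$ on the successive quotients, and then invoke Lemma~\ref{lem:compSeriesN} to bound the composition length of each quotient. One small point that both you and the paper leave implicit: to apply Lemma~\ref{lem:compSeriesN} to a quotient $V_{\xi,i+1}/V_{\xi,i}$ you need it to be generated by a finite-dimensional $\n^+$-submodule, which is not asserted by Proposition~\ref{prop:zaction}; this is most cleanly handled by using the descending filtration $(z-\xi)^i V_\xi$ instead (each term is $U(\g)(z-\xi)^i N$ for a finite-dimensional $\n^+$-module $N$ generating $V_\xi$), after which the argument goes through verbatim.
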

\begin{proof}
If $V \in \W_f$, then $V$ is finitely generated and $Z(\g)$ acts locally finitely.
Therefore suppose $V$ is finitely generated and $Z(\g)$ acts locally finitely.  By Proposition \ref{prop:zaction}, $V = \bigoplus_{\xi \in \cc} V_\xi$, so it is no loss to assume that $V = V_\xi$ for some $\xi \in \cc$.   But Proposition \ref{prop:zaction} also implies that $V$ has a finite filtration $0=V_0 \subset \cdots \subset V_k =V$ where $z$ acts by a scalar on $V_{i+1}/V_i$.  Using Lemma \ref{lem:compSeriesN}, this filtration can be refined to a (still finite) filtration by Whittaker modules with $z$ acting by a scalar.    It follows from \cite[Proposition 4.8]{OW2008} that the factors in this refined filtration are simple, and thus $V$ has a finite composition series.
\end{proof}

We observe that if $\psi=0$, not all finitely generated modules on which $Z(\g)$ acts locally finitely have finite composition length.  In particular, there are Verma modules which do not have finite composition length.  (Cf. \cite{FF90}.) 

The next results use $N$ to describe the structure of $V$.

\begin{prop}\label{prop:decompInN(psi)}
Let $V \in {\rm Ob} (\W_f(\psi, \xi))$ so that $z$ acts by $\xi$ ($\psi \neq 0$).  Let $N \subseteq V$ be a finite-dimensional $\n^+$-submodule such that $U(\g)N = V$.  Suppose $N = N_1 \oplus \cdots \oplus N_k$ so that $N_i$ is an $\n^+$-submodule and ${\rm Wh}_\psi (U(\g)N_i) \subseteq N_i$ for all $i$.  Then $V = \bigoplus_{i = 1}^k U(\g)N_i$. 
\end{prop}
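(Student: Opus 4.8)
The plan is to reduce the statement to a comparison of two finite composition series. First I would observe that since $V = U(\g)N$ and $N = N_1 \oplus \cdots \oplus N_k$, we automatically have $V = \sum_{i=1}^k U(\g)N_i$, so the only thing to prove is that this sum is direct. To do this, I would set $V_i = U(\g)N_i$ and compare $\sum_i \ell(V_i)$ with $\ell(V)$: if I can show that $\ell(V) = \sum_{i=1}^k \ell(V_i)$, then the surjection $\bigoplus_i V_i \twoheadrightarrow V$ is an isomorphism by a length count (both sides lie in $\W_f(\psi,\xi)$ by Theorem~\ref{thm:quotU+LocFin}, so all lengths are finite).

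The key computational input is Lemma~\ref{lem:compSeriesN}: for each $N_i$, applying the lemma to $V_i = U(\g)N_i$ with the generating $\n^+$-module $N_i$ gives $\ell(V_i) \le \dim_\cc N_i$, with the composition factors of $V_i$ being simple Whittaker modules $L(\psi,\xi)$ arising from a flag of $\psi$-eigenvectors in $N_i$. Applying the same lemma to $V$ itself with the generating module $N$ gives $\ell(V) \le \dim_\cc N = \sum_i \dim_\cc N_i$. So I have $\ell(V) \le \sum_i \dim_\cc N_i$ and $\sum_i \ell(V_i) \le \sum_i \dim_\cc N_i$; these inequalities point the wrong way for a direct count, so I need a sharper argument. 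The cleaner route is: choose, for each $i$, a flag $0 = N_{i,0} \subset N_{i,1} \subset \cdots \subset N_{i,n_i} = N_i$ of $\n^+$-submodules as in Lemma~\ref{lem:compSeriesN}, with eigenvectors $v^{(i)}_1, \ldots, v^{(i)}_{n_i}$; concatenating these gives a flag of $\n^+$-submodules of $N$ refining $N_1 \subset N_1\oplus N_2 \subset \cdots$, and hence (by part (4) of the lemma applied to $V$) a filtration of $V$ whose subquotients are either trivial or simple Whittaker; in particular the number of \emph{nontrivial} subquotients (counted with multiplicity) in the $N_i$-part equals $\ell(V_i/ (V_i \cap (\text{earlier pieces})))$. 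The point is that the subquotient $U(\g)\langle v^{(i)}_1,\dots,v^{(i)}_m\rangle \big/ U(\g)\langle v^{(i)}_1,\dots,v^{(i)}_{m-1}\rangle$ computed \emph{inside $V$} is a quotient of the corresponding subquotient computed inside $V_i$, and I want these to agree.

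The hypothesis ${\rm Wh}_\psi(U(\g)N_i) \subseteq N_i$ is what forces this agreement and is the crux of the argument. Here is how I expect to use it: suppose the sum $\sum_i V_i$ is not direct, so there is a minimal $j$ with $V_j \cap (V_1 + \cdots + V_{j-1}) \ne 0$. This intersection is a nonzero submodule of $V_j \in \W(\psi)$, so by Theorem~\ref{thm:Wdecomp}(iv) it contains a nonzero $\psi$-Whittaker vector $w$. Then $w \in {\rm Wh}_\psi(V_j) = {\rm Wh}_\psi(U(\g)N_j) \subseteq N_j$, while also $w \in V_1 + \cdots + V_{j-1}$; pushing $w$ into each component and applying Corollary~5.2 of \cite{OW2008} (which identifies ${\rm Wh}_\psi$ of a Whittaker module as $S(z)\cdot(\text{cyclic vector})$) together with ${\rm Wh}_\psi(V_i)\subseteq N_i$, I get that $w$ lies in $N_1 \oplus \cdots \oplus N_{j-1}$ as well — contradicting $N_j \cap (N_1 \oplus \cdots \oplus N_{j-1}) = 0$ inside $N$. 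The main obstacle is making this last step precise: I need that a Whittaker vector of $V$ lying in $V_1 + \cdots + V_{j-1}$ actually has each of its components (under some internal direct-sum structure built up inductively) again a Whittaker vector, which requires knowing ${\rm Wh}_\psi$ behaves well under the already-established partial direct sum $V_1 \oplus \cdots \oplus V_{j-1}$ — i.e., running the whole argument as an induction on $k$, where at stage $j$ one already knows $V_1 + \cdots + V_{j-1} = V_1 \oplus \cdots \oplus V_{j-1}$ with ${\rm Wh}_\psi$ of the sum equal to the sum of the ${\rm Wh}_\psi$'s. Setting up that inductive bookkeeping cleanly, rather than the individual steps, is where the real work lies.
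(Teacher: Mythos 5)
Your second argument (the minimal-counterexample one) is correct and rests on exactly the same two pillars as the paper's proof: Theorem \ref{thm:Wdecomp}(iv) to extract a nonzero $\psi$-Whittaker vector from a nonzero submodule, and the hypothesis ${\rm Wh}_\psi(U(\g)N_i) \subseteq N_i$ to funnel that vector back into $N$ and contradict the directness of $N = N_1 \oplus \cdots \oplus N_k$. The mechanics differ slightly: the paper takes an arbitrary dependence relation $v_1 + \cdots + v_n = 0$ and multiplies it successively by elements $u_1, u_2, \ldots \in U(\n^+)$, converting one component at a time into an element of the corresponding $N_i$ (using that $U(\n^+)$ acts on an already-produced Whittaker vector by scalars), whereas you take the minimal $j$ with $V_j \cap (V_1 + \cdots + V_{j-1}) \neq 0$, pull a Whittaker vector $w$ out of that intersection, and split it along the already-direct sum $V_1 \oplus \cdots \oplus V_{j-1}$. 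The ``inductive bookkeeping'' you flag as the real work is in fact a one-line verification: if $w = \sum_{i<j} w_i$ with $w_i \in V_i$ and the sum direct, then $0 = (x - \psi(x))w = \sum_i (x-\psi(x))w_i$ with each summand in $V_i$ forces $(x-\psi(x))w_i = 0$ for every $i$, so each component is itself a Whittaker vector and hence lies in $N_i$; no appeal to Corollary 5.2 of \cite{OW2008} is needed (nor would it apply directly, since the $U(\g)N_i$ need not be cyclic Whittaker modules). The length-counting plan in your first half is, as you yourself note, a dead end and can simply be deleted.
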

\begin{proof}
The sum $\sum_{i = 1}^k U(\g)N_i$ is a $U(\g)$-submodule that contains $N$, so it is clear that $\sum_{i = 1}^k U(\g)N_i = V$.  It remains to show that the sum $\sum_{i = 1}^k U(\g)N_i$ is direct.  Suppose that 
\begin{align}\label{eqn:sumEqualsZero}
v_1 + \cdots + v_n = 0,
\end{align}
with $v_i \in U(\g)N_i$ for all $i$ and $v_i \neq 0$ for some $i$.  It is no loss to suppose that $v_1 \neq 0$.  By Theorem \ref{thm:Wdecomp}, there exists some $u_1 \in U(\n^+)$ such that $u_1 v_1$ is a nonzero $\psi$-eigenvector.  On the other hand, $u_1 v_1 \in U(\g)N_1$, and by assumption, we have ${\rm Wh}_\psi (U(\g)N_1) \subseteq N_1$.  

If we multiply (\ref{eqn:sumEqualsZero}) by $u_1$, we obtain 
\begin{align}\label{eqn:HitWith_u1}
n_1 + v_2' + v_3' + \cdots + v_n' = 0,
\end{align}
where $0 \neq n_1 \in N_1 \cap {\rm Wh}_\psi (V)$ and $v_i' \in U(\g)N_i$ for $i = 2, \ldots, n$.  Now if $v_2' = v_3' = \cdots = v_n' = 0$, then we have the contradiction that $n_1 = 0$.  So suppose, without loss of generality, that $v_2' \neq 0$.  Then by a similar argument (to above), there exists $u_2 \in U(\n^+)$ such that $0 \neq n_2 := u_2 v_2' \in {\rm Wh}_\psi (U(\g)N_2) \subseteq N_2$.  Note that because $n_1 \in {\rm Wh}_\psi (V)$ and $u_2 \in U(\n^+)$, it follows that $u_2 n_1 = c_1 n_1$ for some $c_1 \in \cc$.  Multiply (\ref{eqn:HitWith_u1}) by $u_2$ to obtain 
$$c_1n_1 + n_2 + v_3'' + \cdots + v_n'' = 0,$$
where $v_i'' \in U(\g)N_i$ for $i = 3, \ldots, n$.  If we apply this argument repeatedly, we are eventually able to write $0$ as a sum of vectors in the various $N_i$, where at least one component is nonzero.  This contradicts the fact that the sum $N = \bigoplus N_i$ is direct.  Thus our assumption that one or more of the $v_i$ in (\ref{eqn:sumEqualsZero}) is nonzero must be wrong, so that the sum $\sum_i U(\g)N_i$ is direct.
\end{proof}

\begin{cor}\label{cor:whittVectsDecomp}
Let $V \in {\rm Ob} (\W_f(\psi))$ ($\psi \neq 0$), and suppose that $z$ acts by a scalar.  If $V$ is generated by linearly independent Whittaker vectors $w_1, \ldots, w_k$, then $V = \bigoplus_{i = 1}^k U(\g)w_i$.  
\end{cor}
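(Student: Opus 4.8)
The plan is to obtain this as a direct application of Proposition \ref{prop:decompInN(psi)}, taking the generating $\n^+$-submodule to be ${\rm span}_\cc\{w_1,\dots,w_k\}$ and decomposing it into the individual lines $\cc w_i$.

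First I would note that, since $z$ acts on $V$ by a scalar $\xi$, we have $(z-\xi)V=0$, so $V=V_\xi$ and hence $V\in{\rm Ob}(\W_f(\psi,\xi))$ with $z$ acting by $\xi$; this is exactly the standing hypothesis of Proposition \ref{prop:decompInN(psi)}. Next, because each $w_i$ is a $\psi$-Whittaker vector, the line $\cc w_i$ is a one-dimensional $\n^+$-submodule of $V$ on which $\n^+$ acts through $\psi$. Since $w_1,\dots,w_k$ are linearly independent, $N:=\cc w_1\oplus\cdots\oplus\cc w_k$ is a finite-dimensional $\n^+$-submodule of $V$, and $U(\g)N=U(\g)w_1+\cdots+U(\g)w_k=V$ because the $w_i$ generate $V$. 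So with $N_i:=\cc w_i$ we have the $\n^+$-module decomposition $N=N_1\oplus\cdots\oplus N_k$.

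The only point that genuinely requires an argument --- and hence the main (though mild) obstacle --- is to verify the remaining hypothesis of Proposition \ref{prop:decompInN(psi)}, namely ${\rm Wh}_\psi(U(\g)N_i)\subseteq N_i$. For this I would observe that $U(\g)w_i$ is a Whittaker module with cyclic Whittaker vector $w_i$ on which $z$ acts by the scalar $\xi$, and then invoke \cite[Corollary 5.2]{OW2008} (the result already used in the proof of Lemma \ref{lem:Whittakerhomomorphism}), which identifies the space of Whittaker vectors of such a module with $S(z)w_i$. Since $z$ acts by $\xi$, this forces $S(z)w_i=\cc w_i=N_i$, so in fact ${\rm Wh}_\psi(U(\g)N_i)=N_i$.

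With both hypotheses of Proposition \ref{prop:decompInN(psi)} in place, it gives $V=\bigoplus_{i=1}^k U(\g)N_i=\bigoplus_{i=1}^k U(\g)w_i$, which is the claim. Apart from the citation to \cite{OW2008} for the shape of the Whittaker-vector space, everything here is bookkeeping, so I anticipate no real difficulty.
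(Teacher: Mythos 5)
Your proposal is correct and follows essentially the same route as the paper: both reduce the claim to Proposition \ref{prop:decompInN(psi)} with $N_i = \cc w_i$, the only task being to check ${\rm Wh}_\psi(U(\g)w_i) \subseteq \cc w_i$. The paper verifies this via simplicity of $U(\g)w_i$ (Proposition \ref{cor:SimpleL_PsiXi}) rather than by quoting the $S(z)w_i$ description from \cite{OW2008}, but this is an immaterial difference.
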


\begin{proof}
Note that the space $N = \cc w_1 + \cdots + \cc w_k$ is an $\n^+$-submodule of $V$ that decomposes into $\n^+$-submodules as $N = N_1 \oplus \cdots \oplus N_k$, where $N_i = \cc w_i$.  Furthermore, since $z$ acts by a scalar, $U(\g)w_i \subseteq V$ is a simple Whittaker module, and thus ${\rm Wh}_\psi (U(\g)w_i) = \cc w_i$.  The result then follows from Proposition \ref{prop:decompInN(psi)}.
\end{proof}

In the following corollary, let $\ell (V)$ denote the composition length of $V$.  

\begin{cor}\label{cor:wVectsCompLength}
Let $V \in {\rm Ob} (\W_f(\psi))$ ($\psi \neq 0$), and assume $z \in \g$ acts by a scalar.  Then $\dim {\rm Wh}_\psi (V) \le \ell (V)$.  
\end{cor}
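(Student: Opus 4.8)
The plan is to proceed by induction on the composition length $\ell(V)$, using the decomposition machinery of Proposition~\ref{prop:decompInN(psi)} together with the structure theorem for Whittaker modules where $z$ acts by a scalar. The base case $\ell(V) = 0$ (i.e. $V = 0$) is trivial, and $\ell(V) = 1$ means $V$ is a simple Whittaker module $L(\psi,\xi)$, for which $\dim {\rm Wh}_\psi(V) = 1$ by Corollary~5.2 of \cite{OW2008}. For the inductive step, I would choose a finite-dimensional $\n^+$-submodule $N \subseteq V$ with $U(\g)N = V$ (which exists since $V$ is finitely generated with a locally finite $\n^+$-action), and apply Lemma~\ref{lem:compSeriesN} to obtain the filtration $0 = V_0 \subseteq V_1 \subseteq \cdots \subseteq V_n = V$ with each successive quotient either trivial or a simple Whittaker module.

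First I would reduce to understanding how Whittaker vectors interact with a single short exact sequence $0 \to V' \to V \to V'' \to 0$ in $\W_f(\psi,\xi)$ with $z$ acting by $\xi$. The key sublemma is that $\dim {\rm Wh}_\psi(V) \le \dim {\rm Wh}_\psi(V') + \dim {\rm Wh}_\psi(V'')$: given a set of Whittaker vectors in $V$ whose images in $V''$ are linearly independent, these images generate a submodule of $V''$ that, by Corollary~\ref{cor:whittVectsDecomp}, is a direct sum of simple Whittaker modules, and one pulls back a complementary argument; the remaining Whittaker vectors map into ${\rm Wh}_\psi(V')$. Combined with the observation that a trivial quotient contributes a composition factor but (being killed by $\n^+$, hence on which $\n^+$ acts by $0 \ne \psi$) contributes nothing to ${\rm Wh}_\psi$, we get $\dim {\rm Wh}_\psi(V_k) \le \dim {\rm Wh}_\psi(V_{k-1}) + \ell(V_k/V_{k-1})$, and summing the telescoping inequality along the filtration yields $\dim {\rm Wh}_\psi(V) \le \ell(V)$.

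The main obstacle I expect is making the short-exact-sequence sublemma fully rigorous, specifically lifting Whittaker vectors: if $\bar{w} \in {\rm Wh}_\psi(V'')$ and $w \in V$ maps to $\bar{w}$, then $w$ need not be a Whittaker vector, since $(d_i - \psi_i)w$ lies in $V'$ but may be nonzero. One must argue that after modifying $w$ by an element of $V'$ one can either make it a genuine Whittaker vector or conclude that the contribution is absorbed into ${\rm Wh}_\psi(V')$. I would handle this by working with a basis of ${\rm Wh}_\psi(V)$ directly: project it to $V''$, extract a maximal linearly independent subset of images, use Corollary~\ref{cor:whittVectsDecomp} to control the submodule of $V''$ they generate, and observe that the kernel of the projection restricted to ${\rm Wh}_\psi(V)$ lands in ${\rm Wh}_\psi(V')$, giving the subadditivity cleanly. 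An alternative, possibly cleaner route is to invoke Proposition~\ref{prop:decompInN(psi)}: if $w_1, \ldots, w_m$ are linearly independent Whittaker vectors in $V$, set $N_0 = \cc w_1 + \cdots + \cc w_m$ and note each $U(\g)w_i$ is simple, so $N_0 = \bigoplus \cc w_i$ forces $U(\g)N_0 = \bigoplus_i U(\g) w_i$, a direct sum of $m$ simple modules inside $V$; hence $m \le \ell(V)$ immediately, which in fact proves the corollary without an explicit induction.
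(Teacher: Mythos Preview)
Your proposal is correct, and the ``alternative, possibly cleaner route'' you sketch in your last two sentences is precisely the paper's proof: take linearly independent Whittaker vectors $w_1,\ldots,w_m$, note that by Corollary~\ref{cor:whittVectsDecomp} (which is the immediate consequence of Proposition~\ref{prop:decompInN(psi)} for this situation) the submodule they generate is $\bigoplus_i U(\g)w_i$ with each summand simple, hence has length $m$, and conclude $m \le \ell(V)$.

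Your primary inductive approach also works but is more elaborate than necessary. The ``obstacle'' you anticipate is not actually there: the subadditivity $\dim {\rm Wh}_\psi(V) \le \dim {\rm Wh}_\psi(V') + \dim {\rm Wh}_\psi(V'')$ requires no lifting of Whittaker vectors at all. The quotient map restricts to a linear map ${\rm Wh}_\psi(V) \to {\rm Wh}_\psi(V'')$ whose kernel is ${\rm Wh}_\psi(V) \cap V' \subseteq {\rm Wh}_\psi(V')$, and rank--nullity does the rest. With that in hand the induction on $\ell(V)$ is clean, but it buys nothing beyond the direct one-line argument you (and the paper) give at the end.
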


\begin{proof}
Let $\{ w_1, \ldots, w_k \}$ be a linearly independent subset of ${\rm Wh}_\psi (V)$, and let $W$ denote the $U(\g)$-submodule $\sum_{i=1}^k U(\g)w_i$ of $V$.  By Corollary \ref{cor:whittVectsDecomp}, we know that $W = \bigoplus_{i=1}^k U(\g)w_i$.  Since each summand $U(\g)w_i$ is simple, $\ell (W) = k$.  As $W \subseteq V$ is a submodule, we know $\ell (W) \le \ell (V)$, and the result follows.
\end{proof}

Note that Corollary \ref{cor:wVectsCompLength} and Lemma \ref{lem:compSeriesN} give us lower and upper bounds for the composition length of $V$.

\begin{prop}\label{prop:recognizeIndec-CR}
Fix $0 \neq \psi \in (\n^+/[n^+,\n^+])^*)$.  Suppose $V \in {\rm Ob} (\mathcal W_f(\psi))$ and that $z$ acts by some scalar.  Then 
\begin{enumerate}
\item $\dim {\rm Wh}_\psi (V) = 1$  if and only if every submodule of V is indecomposable, and 
\item $\dim {\rm Wh}_\psi (V) = \ell (V)$ (the composition length of $V$)  if and only if V is completely reducible.
\end{enumerate}
\end{prop}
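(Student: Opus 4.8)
The plan is to prove both biconditionals by combining the decomposition results already established for $Z$-semisimple modules with the bounds on composition length from Corollary~\ref{cor:wVectsCompLength} and Lemma~\ref{lem:compSeriesN}. Throughout, write $d = \dim {\rm Wh}_\psi(V)$ and $\ell = \ell(V)$; we always have $1 \le d \le \ell$ since $V$ is nonzero (Theorem~\ref{thm:Wdecomp}(iv) gives a Whittaker vector) and $d \le \ell$ by Corollary~\ref{cor:wVectsCompLength}.

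For part (1), the forward direction is the substantive one. Assume $d = 1$ and suppose, for contradiction, that some submodule $U \subseteq V$ decomposes as $U = U_1 \oplus U_2$ with both summands nonzero. Applying Theorem~\ref{thm:Wdecomp}(iv) to each $U_i$ produces a nonzero $\psi$-Whittaker vector $w_i \in U_i$; since the sum $U_1 \oplus U_2$ is direct, $w_1$ and $w_2$ are linearly independent elements of ${\rm Wh}_\psi(V)$, contradicting $d=1$. The converse is the contrapositive: if $d \ge 2$, pick linearly independent Whittaker vectors $w_1, w_2 \in {\rm Wh}_\psi(V)$; by Corollary~\ref{cor:whittVectsDecomp}, the submodule $U(\g)w_1 + U(\g)w_2$ equals $U(\g)w_1 \oplus U(\g)w_2$, which is a decomposable submodule of $V$. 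So $V$ has a decomposable submodule, as needed.

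For part (2), the "if" direction: if $V$ is completely reducible, write $V = \bigoplus_{i=1}^{m} S_i$ with each $S_i$ simple. Each $S_i$ is a simple object of $\W_f(\psi)$ on which $z$ acts by a scalar, hence (Proposition~\ref{prop:simplePsiNot0}) a simple Whittaker module $L(\psi,\xi_i)$ with ${\rm Wh}_\psi(S_i)$ one-dimensional; moreover ${\rm Wh}_\psi(V) = \bigoplus_i {\rm Wh}_\psi(S_i)$ because a Whittaker vector in a direct sum decomposes into Whittaker-vector components (each $\psi$-eigenvector condition is preserved under the projections). Thus $d = m = \ell$. For the "only if" direction, suppose $d = \ell$. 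Take a maximal linearly independent set $w_1, \ldots, w_d \in {\rm Wh}_\psi(V)$; by Corollary~\ref{cor:whittVectsDecomp} the submodule $W := \sum_i U(\g)w_i$ equals $\bigoplus_{i=1}^{d} U(\g)w_i$, and each $U(\g)w_i$ is a simple Whittaker module (since $z$ acts by a scalar), so $W$ is completely reducible with $\ell(W) = d = \ell(V)$. Since $W \subseteq V$ and both have the same finite composition length, $W = V$, so $V$ is completely reducible.

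The main obstacle is the bookkeeping in the "only if" direction of part (2): one must be sure that $\ell(W) = \ell(V)$ together with $W \subseteq V$ forces $W = V$, which uses that composition length is additive on short exact sequences and that $\ell(V/W) = 0$ iff $V/W = 0$ — routine but worth stating. A secondary point needing a line of justification is the claim, used in both parts, that a Whittaker vector in a direct sum $\bigoplus M_i$ has each component a Whittaker vector; this follows because for $x \in \n^+$ the operator $x - \psi(x)$ preserves each $M_i$, so if $(x-\psi(x))w = 0$ then $(x-\psi(x))$ kills each component of $w$ separately. Everything else is a direct appeal to Corollary~\ref{cor:whittVectsDecomp}, Corollary~\ref{cor:wVectsCompLength}, Proposition~\ref{prop:simplePsiNot0}, and Theorem~\ref{thm:Wdecomp}(iv).
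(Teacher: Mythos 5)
Your proposal is correct and follows essentially the same route as the paper: part (1) via Theorem \ref{thm:Wdecomp}(iv) and Corollary \ref{cor:whittVectsDecomp}, and part (2) via Corollary \ref{cor:whittVectsDecomp} together with the fact that a simple Whittaker module of type $\psi \neq 0$ has a one-dimensional space of Whittaker vectors. You simply spell out in more detail the steps the paper leaves terse (the componentwise decomposition of Whittaker vectors in a direct sum and the length comparison forcing $W = V$), and these justifications are sound.
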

\begin{proof}
For (1), assume $\dim {\rm Wh}_\psi (V) = 1$, and let $S \subseteq V$ be a submodule.  If $S = S_1 \oplus S_2$ for submodules $S_1$ and $S_2$, then by Theorem \ref{thm:Wdecomp}, $S_1$ and $S_2$ contain Whittaker vectors.  Such vectors must necessarily be linearly independent, so it follows from the assumption $\dim {\rm Wh}_\psi (V) = 1$ that $S$ cannot have such a decomposition.  Conversely, suppose that every submodule of $V$ is indecomposable, and let $W = U(\g) {\rm Wh}_\psi (V)$.  By Corollary \ref{cor:whittVectsDecomp}, $W$ is a direct sum of $\dim {\rm Wh}_\psi (V)$ simple Whittaker modules, so it must be that $\dim {\rm Wh}_\psi (V) =1$.  Statement (2) follows from Corollary \ref{cor:whittVectsDecomp} along with the fact that $\dim_\cc {\rm Wh}_\psi (L) = 1$ whenever $L$ is a simple Whittaker module of type $\psi \neq 0$.
\end{proof}

\subsection{Induced Modules}
The previous results in this section suggest that finite-dimensional $\n^+$-submodules may provide a tool for better understanding the $Z$-semisimple modules in $\W_f(\psi, \xi)$.  With this in mind, we construct a new set of induced modules.

Fix $0 \neq \psi \in (\n^+/[n^+,\n^+])^*)$ and $\xi \in \cc$. Suppose that $N$ is a finite-dimensional $(\cc z \oplus \n^+)$-module so that $z$ acts by $\xi$ and $x-\psi(x)$ acts locally nilpotently for all $x \in \n^+$.  Define the module $V_N$ by 
\begin{align} \label{eqn:defV_N}
V_N = U(\g) \otimes_{U(\cc z \oplus \n^+)} N, 
\end{align}
and note that $V_N \in {\rm Ob}(\mathcal W(\psi, \xi))$.   Where there is no confusion, regard $N$ as a subspace of $V_N$ by identifying $N$ with $1 \otimes N$. Following Lemma \ref{lem:compSeriesN}, $N$ has a basis $v_1, v_2, \ldots, v_n$ so that
\begin{equation} \label{eqn:finitePsiFlag}
(x-\psi(x))v_1=0, \quad (x-\psi(x))v_j  \in {\rm span}_\cc \{ v_k \mid k < j \}
\end{equation}
for $j>1$ and $x \in \n^+$. Additionally, $V_N$ is a free left $U(\n^- \oplus \cc d_0)$-module, since $U(\g) \cong U(\n^- \oplus \cc d_0) \otimes_\cc U(\cc z \oplus \n^+)$ and $U(\cc z \oplus \n^+) N = N$.

The following result implies that every $Z$-semisimple module in $\W_f(\psi, \xi)$ is the homomorphic image of some $V_N$.


\begin{prop}\label{lem:universalPropN}
Let $V \in {\rm Ob}(\W_f(\psi, \xi))$ where $z$ acts by $\xi$, and let $N \subseteq V$ be a finite-dimensional $\n^+$-module such that $V = U(\g)N$.  Then there is a unique surjective $\g$-module homomorphism $\pi : V_N \to V$ with the property that $\pi (1 \otimes w) = w$ for all $w \in N$.
\end{prop}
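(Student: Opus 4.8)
This is the standard universal-property argument for an induced module, so the plan is to invoke the adjunction between restriction and induction (equivalently, the universal property of $U(\g) \otimes_{U(\cc z \oplus \n^+)} (-)$). First I would observe that $N \subseteq V$ is by hypothesis a $(\cc z \oplus \n^+)$-submodule: it is an $\n^+$-submodule by assumption, and $z$ acts on all of $V$ by $\xi$, so in particular on $N$. Thus the inclusion $\iota : N \hookrightarrow V$ is a homomorphism of $(\cc z \oplus \n^+)$-modules, and $N$ satisfies the hypotheses needed to form $V_N$ as in \eqref{eqn:defV_N}.

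The key step is then the universal property of the induced module (tensor–hom adjunction): for any $\g$-module $V$, restriction gives a natural isomorphism
$$
\Hom_{\g}(V_N, V) = \Hom_{\g}\!\left(U(\g) \otimes_{U(\cc z \oplus \n^+)} N,\ V\right) \;\cong\; \Hom_{\cc z \oplus \n^+}(N, V),
$$
under which a $\g$-map $\pi$ corresponds to the $(\cc z \oplus \n^+)$-map $w \mapsto \pi(1 \otimes w)$. Applying this to the $(\cc z \oplus \n^+)$-map $\iota$ produces a unique $\g$-module homomorphism $\pi : V_N \to V$ with $\pi(1 \otimes w) = w$ for all $w \in N$. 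Concretely, $\pi$ is given by $\pi(u \otimes w) = u\, w$ for $u \in U(\g)$, $w \in N$; this is well-defined precisely because $\iota$ is $(\cc z \oplus \n^+)$-linear, and it is the only possibility since $V_N$ is generated as a $\g$-module by $1 \otimes N$.

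Surjectivity is immediate: the image of $\pi$ is a $\g$-submodule of $V$ containing $\pi(1 \otimes N) = N$, and $V = U(\g)N$ by hypothesis, so $\operatorname{im} \pi = V$. Uniqueness also follows from $V_N = U(\g)(1 \otimes N)$, since any $\g$-map agreeing with $\pi$ on $1 \otimes N$ agrees with it everywhere. There is no real obstacle here; the only point requiring a word of care is the verification that $N$ is stable under $\cc z$ as well as under $\n^+$ (so that $\iota$ is a morphism of $(\cc z \oplus \n^+)$-modules rather than merely of $\n^+$-modules), and that the abstract module $V_N$ of \eqref{eqn:defV_N} really is built from the same data — both of which are immediate from the running hypothesis that $z$ acts by the scalar $\xi$ on $V$.
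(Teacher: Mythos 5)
Your proposal is correct and takes essentially the same approach as the paper: both construct $\pi(u \otimes w) = uw$ via the universal property of the induced module, obtaining surjectivity from $V = U(\g)N$ and uniqueness from the fact that $1 \otimes N$ generates $V_N$ over $U(\g)$. The only cosmetic difference is that the paper secures well-definedness concretely, writing each element of $V_N$ uniquely as $\sum_i b_i \otimes v_i$ with $b_i \in U(\n^- \oplus \cc d_0)$ (freeness of $V_N$ over $U(\n^- \oplus \cc d_0)$) and then checking $(\cc z \oplus \n^+)$-equivariance via the decomposition $U(\g) \cong U(\n^- \oplus \cc d_0) \otimes U(\cc z \oplus \n^+)$, whereas you invoke the tensor--hom adjunction directly after noting (correctly, and this is the one point worth making explicit) that $N$ is a $(\cc z \oplus \n^+)$-submodule of $V$.
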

\begin{proof}
Let $v_1, \ldots, v_n \in N$ be a basis for $N$ as in (\ref{eqn:finitePsiFlag}), and recall that $V_N$ is a free $U(\n^- \oplus \cc d_0)$-module with basis $1 \otimes v_1, \ldots, 1 \otimes v_n$.  Thus every element of $V_N$ can be uniquely written in the form $\sum_i b_i \otimes v_i$, where $b_i \in U(\n^- \oplus \cc d_0)$, and we define $\pi \left( \sum_i b_i \otimes v_i \right) = \sum_i b_i v_i$.

It is clear that $\pi$ is a surjective $\cc$-linear map with the property that $\pi ( b \otimes v) = bv$ whenever $b \in U(\n^- \oplus \cc d_0)$ and  $v \in N$.  It suffices to show that $\pi \left( x \sum_i b_i \otimes v_i \right) = x \pi \left( \sum_i b_i \otimes v_i \right)$ for all $x \in \cc z \oplus \n^+$.  This follows from the fact that $U(\g) \cong U(\n^- \oplus \cc d_0) \otimes U(\cc z \oplus \n^+)$.
\end{proof}

\begin{lem}\label{lem:compLengthV_N}
Let $N$ be a finite-dimensional $\n^+ \oplus \cc z$-module, where $z$ acts on $N$ by $\xi \in \cc$.  Then $\ell (V_N) = \dim_\cc N$.
\end{lem}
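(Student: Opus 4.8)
The plan is to produce, for $V_N$, a filtration of length exactly $\dim_\cc N$ all of whose successive quotients are simple. Lemma \ref{lem:compSeriesN} already supplies a filtration whose factors are \emph{either} zero \emph{or} simple Whittaker modules, so the only thing that needs to be added is that none of these factors vanishes; for that I would exploit the fact that $V_N$ is free as a $U(\n^- \oplus \cc d_0)$-module.

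First I would apply Lemma \ref{lem:compSeriesN} to $V_N$ itself: since $z$ is central and acts by $\xi$ on $N$, it acts by $\xi$ on $V_N$, and $V_N = U(\g)N$ with $N = 1 \otimes N$ a finite-dimensional $\n^+$-submodule of dimension $n = \dim_\cc N$. The lemma then gives a basis $v_1, \ldots, v_n$ of $N$ satisfying (\ref{eqn:finitePsiFlag}) together with the chain of $\g$-submodules $V_k = \sum_{i=1}^k U(\g) v_i$,
\[
0 = V_0 \subseteq V_1 \subseteq \cdots \subseteq V_n = V_N,
\]
in which each quotient $V_k / V_{k-1}$ is generated by the image of $v_k$ and is either zero or a simple Whittaker module. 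Thus $\ell(V_N) \le n$, and it remains only to show that $v_k \notin V_{k-1}$ for every $k$.

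The key step is the containment $U(\g) v_i \subseteq \bigoplus_{j \le i} U(\n^- \oplus \cc d_0)(1 \otimes v_j)$ inside $V_N$. Indeed, $z$ acts on $N$ by the scalar $\xi$, and by (\ref{eqn:finitePsiFlag}) every $x \in \n^+$ sends each $v_i$ into ${\rm span}_\cc\{v_1, \ldots, v_i\}$, so ${\rm span}_\cc\{v_1, \ldots, v_i\}$ is a $\cc z \oplus \n^+$-submodule of $N$ and hence $U(\cc z \oplus \n^+) v_i \subseteq {\rm span}_\cc\{v_1, \ldots, v_i\}$. Writing an arbitrary element of $U(\g)$ through the PBW factorization $U(\g) \cong U(\n^- \oplus \cc d_0) \otimes_\cc U(\cc z \oplus \n^+)$, and recalling that the tensor product defining $V_N$ is over $U(\cc z \oplus \n^+)$, one obtains $U(\g) v_i = U(\n^- \oplus \cc d_0)\, U(\cc z \oplus \n^+)\, v_i \subseteq \bigoplus_{j \le i} U(\n^- \oplus \cc d_0)(1 \otimes v_j)$. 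Consequently $V_{k-1} = \sum_{i < k} U(\g) v_i \subseteq \bigoplus_{j \le k-1} U(\n^- \oplus \cc d_0)(1 \otimes v_j)$. Since $V_N$ is free over $U(\n^- \oplus \cc d_0)$ on $1 \otimes v_1, \ldots, 1 \otimes v_n$, the element $v_k = 1 \otimes v_k$ lies outside the sum of the summands indexed by $j \ne k$, so $v_k \notin V_{k-1}$.

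Therefore every quotient $V_k / V_{k-1}$ is nonzero, hence a simple Whittaker module by Lemma \ref{lem:compSeriesN}, and $\ell(V_N) = \sum_{k=1}^n \ell(V_k/V_{k-1}) = n = \dim_\cc N$. I expect the only genuine obstacle to be the displayed containment in the previous paragraph: one must carefully track how $U(\cc z \oplus \n^+)$ acts on the flag $\{v_i\}$ inside the induced module $V_N$ and then use the freeness over $U(\n^- \oplus \cc d_0)$ to separate the $U(\n^- \oplus \cc d_0)$-summands; everything else is bookkeeping built on Lemma \ref{lem:compSeriesN}.
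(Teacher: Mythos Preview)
Your proposal is correct and follows essentially the same route as the paper: build the filtration $0=V_0\subseteq V_1\subseteq\cdots\subseteq V_n=V_N$ from the flag basis $v_1,\ldots,v_n$ of $N$, and use freeness of $V_N$ over $U(\n^-\oplus\cc d_0)$ on $1\otimes v_1,\ldots,1\otimes v_n$ to see that each $V_k/V_{k-1}$ is nonzero, hence a simple Whittaker module. The paper states this more tersely (asserting that $V_i$ is free over $U(\n^-\oplus\cc d_0)$ with basis $v_1,\ldots,v_i$, so $V_i/V_{i-1}\neq 0$), whereas you spell out the containment $V_{k-1}\subseteq\bigoplus_{j<k}U(\n^-\oplus\cc d_0)(1\otimes v_j)$ via the PBW factorization; the arguments are equivalent.
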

\begin{proof}
We identify $N$ with the subspace $1 \otimes N \subseteq V_N$.  Let $n = \dim_\cc N$, and define $v_1, \ldots, v_n$ as in  (\ref{eqn:finitePsiFlag}).  
For $i \in \{ 1, \ldots, n \}$, let $V_i = U(\g) {\rm span}_\cc \{ v_1, \ldots, v_i \}$, and note that $V_i$ is a $U(\g)$-submodule of $V_N$.  The quotient $V_i / V_{i-1}$ is generated by the $\psi$-eigenvector $v_i + V_{i-1}$.  Since $V_i$ is a free $U(\n^- \oplus \cc d_0)$-module with basis $v_1, \ldots, v_i$, the quotient $V_i/V_{i-1}$ is necessarily nonzero.  Moreover, the quotients are simple (Whittaker) modules since $z$ acts by $\xi$ on $N$ and thus on $V_N$.
\end{proof}

Using the basis $\{ v_1, \ldots, v_n \}$ for $N$ in (\ref{eqn:finitePsiFlag}), suppose the action of $\n^+$ on $N$ is given by the matrix 
\begin{equation}\label{eqn:matrixOfFunctionals}
\left( \begin{array}{ccccc} \psi & \alpha(1,2) & \cdots & \alpha(1,n-1) & \alpha(1,n) \\ 0 & \psi & \cdots & \alpha(2,n-1) & \alpha(2,n)  \\ 0& 0 & \ddots & \vdots & \vdots \\ 0&0&0& \psi & \alpha(n-1,n) \\ 0&0&0&0& \psi \end{array} \right)
\end{equation}
where $\alpha(i,j) \in (\n^+)^*$ for all $i, j$.  
It is evident that the structure of the module $V_N$ depends upon the various $\alpha(i,j) \in \left( \n^+ \right)^*$. The following subsections demonstrate how this matrix affects certain properties of $V_N$.

Before proceeding, we establish some additional notation.  Let $V_i$ be the submodule of $V_N$ generated by $\{v_1, \ldots, v_i\}$.  If $\gamma \in \left( \n^+ \right)^*$, we define 
$$i \gamma \in \left( \n^+ \right)^*\quad \text{by} \quad (i \gamma) (d_k) = k \gamma_k \quad \text{for all $k \in \z$}.$$

\subsubsection{Structure of $V_N$ when $\psi_2 \neq 0$}\label{subsec:V_N-psi_2-not-0}

We see in Theorem \ref{thm:indecompSupDiag} that under certain conditions on $\alpha(i,j) \in \left( \n^+ \right)^*$, the induced module $V_N$ is indecomposable and non-simple. First we present two technical lemmas; the first follows from a direct computation.

\begin{lem}  \label{lem:d_0}
For $i, k>0$ and $w$ a Whittaker vector, 
$$
[d_i, d_0^k] w = \psi_i \left( (d_0+i)^k -d_0^k\right) w.
$$
\end{lem}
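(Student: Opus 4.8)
The plan is to prove the identity $[d_i, d_0^k]w = \psi_i\left((d_0+i)^k - d_0^k\right)w$ by induction on $k$, using the fact that $d_i w = \psi_i w$ for a Whittaker vector $w$ together with the bracket relation $[d_i, d_0] = i\, d_i$ in the Virasoro algebra (note $[d_i,d_0] = (i-0)d_i = i d_i$, with no central term since $i \neq 0$). The base case $k=1$ is immediate: $[d_i,d_0]w = i d_i w = i\psi_i w$, and one checks $\psi_i\left((d_0+i) - d_0\right)w = i\psi_i w$. More conceptually, the cleanest route is to observe the operator identity $d_i d_0^k = (d_0 + i)^k d_i$ on $U(\g)$ (or at least when applied in a context where the relevant commutators are tracked), which follows because conjugation by $d_i$ shifts $d_0$ by $i$: from $d_i d_0 = (d_0+i)d_i$ one gets $d_i d_0^k = (d_0+i)^k d_i$ by an easy induction. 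Applying both sides to $w$ and using $d_i w = \psi_i w$ gives $d_i d_0^k w = \psi_i (d_0+i)^k w$, whence $[d_i,d_0^k]w = d_i d_0^k w - d_0^k d_i w = \psi_i(d_0+i)^k w - \psi_i d_0^k w$, which is the claim.

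For the inductive step in the first approach, assume the identity for $k$. Write $[d_i, d_0^{k+1}]w = [d_i,d_0^k]d_0 w + d_0^k[d_i,d_0]w$. Here one must be slightly careful: $d_0 w$ need not be a Whittaker vector, so the inductive hypothesis does not apply verbatim to $[d_i,d_0^k](d_0 w)$. This is why I would instead prove the operator identity $d_i d_0^k = (d_0+i)^k d_i$ in $U(\g)$ directly (no Whittaker hypothesis needed), and only at the very end apply it to $w$ and use $d_i w = \psi_i w$. That identity has a genuinely trivial two-line induction: $d_i d_0^{k+1} = (d_i d_0^k)d_0 = (d_0+i)^k d_i d_0 = (d_0+i)^k(d_0+i)d_i = (d_0+i)^{k+1}d_i$, using $d_i d_0 = d_0 d_i + [d_i,d_0] = (d_0 + i)d_i$.

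The main (and only) obstacle is the bookkeeping subtlety just flagged — that $d_0 w$ is not itself a Whittaker vector — which makes the "naive" induction on $k$ applied directly to $w$ slightly awkward; routing the argument through the $U(\g)$-level operator identity $d_i d_0^k = (d_0+i)^k d_i$ sidesteps it entirely. Everything else is a direct computation, exactly as the paper indicates. I would write the final version as: establish $d_i d_0^k = (d_0+i)^k d_i$ in $U(\g)$ by induction on $k$, then compute $[d_i, d_0^k]w = d_i d_0^k w - d_0^k d_i w = (d_0+i)^k d_i w - d_0^k d_i w = \psi_i\left((d_0+i)^k - d_0^k\right)w$.
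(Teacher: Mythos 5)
Your proof is correct, and it is exactly the ``direct computation'' the paper alludes to (the paper gives no further details): the conjugation identity $d_i d_0^k = (d_0+i)^k d_i$, which follows from $[d_i,d_0]=i\,d_i$, applied to $w$ with $d_i w = \psi_i w$. Your observation that one should prove the operator identity in $U(\g)$ first, rather than inducting on $k$ directly at the level of $w$ (since $d_0w$ is not a Whittaker vector), is a sensible way to organize the computation.
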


\begin{lem}\label{lem:computationInSimple}
Let $\psi \in (\n^+/[\n^+,\n^+])^*)$ such that $\psi_2 \neq 0$, and let $\xi \in \cc$.  Let $L(\psi, \xi)$ be the simple Whittaker module of Proposition \ref{cor:SimpleL_PsiXi} with cyclic Whittaker vector $w \in Wh_{\psi}(L(\psi, \xi))$.  If $v \in L(\psi, \xi)$ so that $(d_i - \psi_i)v \in \cc w$ for all $i>0$, then $v \in  \cc w \oplus  \cc d_0w$.  
\end{lem}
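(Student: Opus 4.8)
The plan is to use the explicit structure of $L(\psi,\xi)$ as a free left $U(\n^-\oplus\cc d_0)$-module on the cyclic Whittaker generator $w$. Write $v\in L(\psi,\xi)$ uniquely as $v = \sum_{\lambda\in\p,\, i\ge 0} a_{\lambda,i}\, d_{-\lambda}d_0^i w$. The hypothesis is that $(d_i-\psi_i)v\in\cc w$ for every $i>0$; I want to force all terms $d_{-\lambda}d_0^i w$ with $\lambda\neq\emptyset$ or $i\ge 2$ to vanish. First I would record that, since $w$ is a Whittaker vector, $d_i d_0^j w = d_0^j d_i w + [d_i,d_0^j]w = \psi_i\, d_0^j w + \psi_i\big((d_0+i)^j - d_0^j\big)w$ by Lemma \ref{lem:d_0}; so $(d_i - \psi_i)d_0^j w = \psi_i\big((d_0+i)^j - d_0^j\big)w$, which lies in $\operatorname{span}_\cc\{w, d_0 w,\dots, d_0^{j-1}w\}$ and has $d_0^{j-1}$-coefficient equal to $\psi_i\cdot i\cdot j$.

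Next I would split off the "top" (no $d_{-\lambda}$) part: write $v = p(d_0)w + v'$ where $p$ is a polynomial and $v'$ is a sum of terms $d_{-\lambda}d_0^i w$ with $\lambda\neq\emptyset$. Applying $d_i-\psi_i$ and using the PBW/weight grading, $(d_i-\psi_i)p(d_0)w$ contributes only terms in $\operatorname{span}_\cc\{w, d_0 w,\dots\}$ (weight $\ge$ that of $w$ under the $d_0$-grading shifted appropriately), while $(d_i-\psi_i)v'$ contributes terms $d_{-\mu}d_0^k w$ whose "$d_{-\mu}$-part" is strictly lower in $|\mu|$ than in $v'$ but never becomes empty unless a commutator $[d_i,d_{-\lambda_j}]$ produces a central/$d_0$ term, i.e.\ unless $i=\lambda_j$. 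A degree/weight-filtration argument (filter $L(\psi,\xi)$ by the total degree $|\lambda|$ in the $d_{-k}$'s) shows that the only way $(d_i-\psi_i)v$ can land in $\cc w$ for all $i$ is if $v'$ is supported on partitions of small size; in particular, looking at the highest-$|\lambda|$ part $v'_N$ of $v'$ and choosing $i$ not among its parts kills nothing, while choosing $i$ equal to a part lowers $|\lambda|$ by that part — iterating forces $v'_N$ to be killed outright by all $d_i-\psi_i$, hence (again by freeness over $U(\n^-\oplus\cc d_0)$, and using that the $d_i$ act injectively enough on the relevant finite-dimensional $d_0$-stable pieces) $v'=0$.

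Once $v = p(d_0)w$, apply $(d_i-\psi_i)$ once more: $(d_i-\psi_i)p(d_0)w = \psi_i\big(p(d_0+i)-p(d_0)\big)w\in\cc w$ for all $i>0$. Since $\psi_2\neq 0$, taking $i=2$ gives $p(d_0+2)-p(d_0)\in\cc$, i.e.\ $p(X+2)-p(X)$ is a constant polynomial in $X$; this forces $\deg p\le 1$, so $p(d_0)w\in\cc w\oplus\cc d_0 w$, as claimed. The main obstacle I anticipate is the middle step: making the filtration/"lowering" argument on the $d_{-\lambda}$-part genuinely rigorous rather than heuristic — one has to track, via Lemma \ref{Lem:wj} applied inside the simple module $L(\psi,\xi)$ (equivalently the relevant quotient of $M(0,\xi)$-type computations), exactly how $d_i$ acts on $d_{-\lambda}d_0^j w$ and verify that no cancellation among different $(i)$-applications can conspire to leave only a multiple of $w$ unless $v'$ was already zero. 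The use of $\psi_2\neq 0$ is what guarantees $d_2$ "sees" every relevant part and pins down the degree of $p$; without it the conclusion genuinely fails.
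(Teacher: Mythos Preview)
Your overall plan and the final step coincide with the paper's: write $v=\sum a_{\lambda,j}d_{-\lambda}d_0^{j}w$, argue that $\mathrm{maxdeg}(v):=\max\{|\lambda|:a_{\lambda,j}\neq 0\}=0$, and then use $\psi_2\neq 0$ together with Lemma~\ref{lem:d_0} to force $\deg p\le 1$. The last paragraph of your proposal is correct and is exactly what the paper does.

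The genuine gap is the middle step. Once one shows (as the paper does, using $d_2$ and $\psi_2\neq 0$) that at top degree $N>0$ every $d_0$-exponent is $0$, the degree-$N$ part of $(d_i-\psi_i)v$ vanishes for every $i$, so the top degree yields no further constraint; the degree-$(N{-}1)$ constraint then mixes commutator contributions from the top-degree terms with the $d_0$-shift contributions from the degree-$(N{-}1)$ terms of $v$, and nothing in your sketch rules out cancellation. Your phrase ``iterating forces $v'_N$ to be killed outright by all $d_i-\psi_i$'' is not justified: the hypothesis concerns a single application of each $d_i-\psi_i$, and the top-degree piece $v'_N$ is \emph{not} itself a Whittaker vector merely because its degree-$N$ image under each $d_i-\psi_i$ vanishes. (Also, Lemma~\ref{Lem:wj} is a computation in $M(0,\xi)$, i.e.\ for $\psi=0$, and does not transfer directly to $L(\psi,\xi)$.)

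The paper closes this gap with two ingredients you do not mention. First, from $(d_i-\psi_i)v\in\cc w$ for all $i>0$ one computes directly that $[d_i,d_j]v=0$ for $i,j>0$, hence $d_kv=0$ for all $k\ge 3$. Second, after reducing to $d_0$-exponent $0$ at top degree, the paper invokes the computation in the proof of \cite[Proposition~3.1]{OW2008}: with $\psi_2\neq 0$ and a top-degree term $d_{-\lambda}w$ having smallest part $\lambda_1$, one has $(d_{\lambda_1+2}-\psi_{\lambda_1+2})v\neq 0$. Since $\lambda_1+2\ge 3$, this contradicts $d_kv=0$ for $k\ge 3$. Note in particular that $\psi_2\neq 0$ is used here and in the $d_0$-exponent reduction, not only in your final polynomial-degree step.
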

\begin{proof}
Suppose $v\in L(\psi, \xi)$ such that $(d_i - \psi_i)v \in \cc w$ for $i>0$.  Using the definition of $L(\psi, \xi)$ and the notation of Section \ref{subsec:simpObjW}, we can write
$$v = \sum_{\gamma \in \p, j \geq 0} a_{\gamma, j} d_{-\gamma} d_0^j w,$$
where $a_{\gamma, j} \in \cc$ with only finitely many $a_{\gamma,j} \neq 0$.
Note that
$$(d_i - \psi_i) v = \sum_{\gamma \in \p, j \geq	0} a_{\gamma, j} [d_i, d_{-\gamma}d_0^j] w.$$

Define $maxdeg(v) = max \{|\gamma| \mid a_{\gamma, j} \neq 0 \ \mbox{for some $j$}\}$. We first argue that $maxdeg(v)=0$.  Suppose not; that is, $maxdeg(v)=N>0$.  Let $\mathcal C = \{ (\lambda,i) \in \p \times \z_{\geq 0} \mid \mbox{$|\lambda| = N$ and $a_{\lambda, i} \neq 0$} \}$, and choose $(\lambda, i) \in \mathcal C$ with $i$ maximal.  We will show that $i=0$ and use this to derive a contradiction.  

Consider
\begin{eqnarray}\label{eqn:adD1}
[d_2, d_{-\lambda}d_0^i] w &=& [d_2, d_{-\lambda}] d_0^{i}w + d_{-\lambda} [d_2, d_0^i]w.
\end{eqnarray}
Note that $maxdeg ([d_2, d_{-\lambda}] d_0^iw) \leq N-1$. If $i \neq 0$, then (by Lemma \ref{lem:d_0}) the coefficient of $d_{-\lambda} d_0^{i-1} w$ in $[d_2, d_{-\lambda}d_0^i] w$ is nonzero.  We claim that the coefficient of $d_{-\lambda} d_0^{i-1} w$ in $(d_2 - \psi_{2}) v = \sum_{\gamma \in \p, j \geq 0} a_{\gamma} [d_2, d_{-\gamma}d_0^j] w$ is nonzero, as well.  For any $\gamma \in \p$ and $j \geq 0$, we have 
\begin{eqnarray}\label{eqn:adD1again}
[d_2, d_{-\gamma}d_0^j] w = [d_2, d_{-\gamma}] d_0^jw + d_{-\gamma} [d_2, d_0^j]w.
\end{eqnarray}
If $a_{\gamma,j} \neq 0$ then it is clear from our choice of $\lambda$ that $maxdeg ([d_2, d_{-\gamma}] d_0^jw) \le N-1$.  
Our choice of $(\lambda, i)$ guarantees that the term $d_{-\lambda} d_0^{i-1}w$ does not appear as a summand of $d_{-\gamma} [d_2, d_0^j]w$ for any $(\gamma, j)$ where $\gamma \neq \lambda$ or $j \neq i$. 
 However, $d_{-\lambda} d_0^{i-1} w \not\in \cc w$ since $|\lambda| =N > 0$.  This contradicts the choice of $v$, so it must be that $i=0$ if in fact $N>0$. 

Now $\lambda \neq \emptyset$ if $N>0$, so we may write $\lambda = (0 < \lambda_1 \le \cdots \le \lambda_s)$.  Consider the action of $d_{\lambda_1+ 2}$ on $v$.  In the proof of  \cite[Proposition 3.1]{OW2008}, it is shown that in this situation 
$$(d_{\lambda_1 +2} - \psi_{\lambda_1 + 2})v \neq 0$$
since $2 = \max \{ i \mid \psi_i \neq 0 \}$.  On the other hand, it is assumed that $(d_i - \psi_i)v \in \cc w$; this implies that $[\n^+, \n^+] v = 0$ so that $d_kv = 0$ for $k \ge 3$.  But then 
$$0=d_{\lambda_1+ 2}v= (d_{\lambda_1+ 2}-\psi_{\lambda_1+2} )v,$$
resulting in a contradiction.  Thus our original assumption that $maxdeg(v)=N>0$ must be incorrect. 

When $N=0$, Lemma \ref{lem:d_0} implies that the coefficient of $d_0^jw$ is zero for $j>1$, which completes the proof.
\end{proof}

The following theorem, along with Proposition \ref{prop:recognizeIndec-CR}, implies that $V_N$ is indecomposable in certain cases.  This theorem also illustrates an interesting difference between the present setting and the classical setting of complex semisimple Lie algebras.  Theorem 4.3 of \cite{Ko78} asserts that in the classical setting, the composition length of a module $V$ belonging to a category analogous to $\W_f(\psi)$ is equal to the dimension of ${\rm Wh}_\psi (V)$.

\begin{thm}\label{thm:indecompSupDiag}
Assume $\psi_2 \neq 0$.  Let $N$ be as in (\ref{eqn:matrixOfFunctionals}) and $V_N$ as in (\ref{eqn:defV_N}), and assume that $\alpha (i, i+1) \not\in \cc i \psi$ for $1 \le i \le n-1$.  Then ${\rm Wh}_\psi (V_N) = \cc v_1$.
\end{thm}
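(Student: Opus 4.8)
The plan is to show that a Whittaker vector $w \in \mathrm{Wh}_\psi(V_N)$, written in terms of the basis filtration $V_1 \subseteq V_2 \subseteq \cdots \subseteq V_n = V_N$, must actually lie in $V_1 = U(\g)v_1$, and then that inside $V_1 \cong L(\psi,\xi)$ the only Whittaker vectors are the scalar multiples of $v_1$ (this last fact is exactly Proposition \ref{cor:SimpleL_PsiXi} together with \cite[Corollary 5.2]{OW2008}, since $\psi \neq 0$). So the content is the first reduction, which I would do by downward induction on the filtration index.

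First I would set up the induction: suppose $w \in \mathrm{Wh}_\psi(V_N)$ and let $k$ be minimal with $w \in V_k$; the goal is to show $k = 1$. Assuming $k \geq 2$, consider the image $\bar w$ of $w$ in the quotient $V_k/V_{k-1}$. By Lemma \ref{lem:compLengthV_N} (and its proof) this quotient is the simple Whittaker module $L(\psi,\xi)$ with cyclic Whittaker vector $\bar v_k$, and $\bar w$ is a nonzero Whittaker vector there, hence $\bar w \in \cc \bar v_k$ by simplicity. So after subtracting a scalar multiple of $v_k$ we may assume $w \in V_{k-1}$ unless the scalar is forced to be nonzero — in other words, rescaling, we may take $w = v_k + w'$ with $w' \in V_{k-1}$. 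The key point is now to exploit the hypothesis $\alpha(k-1,k) \notin \cc (k-1)\psi$. Apply $d_i - \psi_i$ to $w$ for $i > 0$: since $w$ is Whittaker, $(d_i - \psi_i)w = 0$, so $(d_i - \psi_i)v_k = -(d_i-\psi_i)w' \in V_{k-1}$. By the flag condition \eqref{eqn:finitePsiFlag}, $(d_i-\psi_i)v_k \in \mathrm{span}_\cc\{v_1,\dots,v_{k-1}\}$, and by \eqref{eqn:matrixOfFunctionals} its $v_{k-1}$-component is $\alpha(k-1,k)(d_i)\,v_{k-1}$ modulo lower terms. I would then pass to the quotient $V_{k-1}/V_{k-2} \cong L(\psi,\xi)$: the image $\bar w$ of $w$ there satisfies $(d_i - \psi_i)\bar w = -\alpha(k-1,k)(d_i)\,\bar v_{k-1}$ (again modulo the simple-module identification), so $(d_i - \psi_i)\bar w \in \cc \bar v_{k-1}$ for every $i > 0$. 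By Lemma \ref{lem:computationInSimple} this forces $\bar w \in \cc \bar v_{k-1} \oplus \cc d_0 \bar v_{k-1}$. Comparing this against the explicit form $(d_i-\psi_i)\bar w = -\alpha(k-1,k)(d_i)\bar v_{k-1}$ and using Lemma \ref{lem:d_0} to compute $(d_i - \psi_i)(a\bar v_{k-1} + b\,d_0\bar v_{k-1}) = b\,\psi_i(d_0+i - d_0)\bar v_{k-1} = b\,i\,\psi_i\,\bar v_{k-1}$ (evaluating $d_0$ on the Whittaker vector appropriately, i.e. the relevant coefficient is $b\,i\,\psi_i$ up to the $d_0$-eigenvalue bookkeeping), we get $\alpha(k-1,k)(d_i) = -b\,i\,\psi_i$ for all $i$, i.e. $\alpha(k-1,k) = -b\,(k\!-\!1)\psi$... more precisely $\alpha(k-1,k) \in \cc\,(i\psi)$ in the notation $i\gamma$ introduced before \S\ref{subsec:V_N-psi_2-not-0}, contradicting the hypothesis.

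The step I expect to be the main obstacle is controlling the passage to the successive quotients cleanly: one has to keep track of the $d_0$-grading (the vectors $d_0^j\bar v_{k-1}$ genuinely appear, which is why Lemma \ref{lem:computationInSimple} allows a $\cc d_0 \bar v$ summand) and verify that the lower-filtration terms $v_1,\dots,v_{k-2}$ occurring in $(d_i-\psi_i)v_k$ do not interfere with the $v_{k-1}$-coefficient computation after projecting to $V_{k-1}/V_{k-2}$. In other words, the delicate bookkeeping is making precise the phrase ``modulo lower terms'' and checking that Lemma \ref{lem:d_0} and Lemma \ref{lem:computationInSimple} combine to pin down $\alpha(k-1,k)$ exactly as an element of $\cc(i\psi)$ rather than merely constraining it. Once that is done, the contradiction is immediate, forcing $k = 1$, and then $\mathrm{Wh}_\psi(V_N) = \mathrm{Wh}_\psi(V_1) = \mathrm{Wh}_\psi(L(\psi,\xi)) = \cc v_1$ by \cite{OW2008}.
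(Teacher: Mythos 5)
Your proposal is correct, and it rests on exactly the same two computational pillars as the paper's proof: Lemma \ref{lem:computationInSimple} (which, when $\psi_2\neq 0$, confines a vector $v$ of a simple Whittaker module satisfying $(d_i-\psi_i)v\in\cc w$ to $\cc w\oplus\cc d_0w$) and the bracket formula of Lemma \ref{lem:d_0}, which together pin the relevant superdiagonal entry down as an element of $\cc\, i\psi$. Where you differ is in the global organization. The paper inducts on $\dim_\cc N$: in the inductive step it supposes a Whittaker vector $w\notin\cc v_1$, uses the inductive hypothesis ${\rm Wh}_\psi(V_{n-1})=\cc v_1$ to force $U(\g)w\cap V_{n-1}=0$ and hence a splitting $V_N=V_{n-1}\oplus U(\g)w$, decomposes $v_n=v+w'$ accordingly, and then runs the two-dimensional computation on $\bar v$ in $V_N/V_{n-2}$ to contradict $\alpha(n-1,n)\notin\cc\, i\psi$. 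You instead take $k$ minimal with $w\in V_k$, normalize $w=v_k+w'$ with $w'\in V_{k-1}$, and project the identity $(d_i-\psi_i)w'=-(d_i-\psi_i)v_k$ into $V_{k-1}/V_{k-2}$, where the lower terms $v_1,\dots,v_{k-2}$ die and only $-\alpha(k-1,k)_i\,\bar v_{k-1}$ survives; this contradicts $\alpha(k-1,k)\notin\cc\, i\psi$ directly, with no induction and no need for the direct-sum decomposition. The bookkeeping you flag as the main obstacle is in fact harmless, precisely because $V_{k-2}$ is a $\g$-submodule containing $v_1,\dots,v_{k-2}$, so those terms vanish in the quotient. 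One notational point to fix: what you project to $V_{k-1}/V_{k-2}$ is $w'$ (which lies in $V_{k-1}$), not $w$ itself (which does not); with that correction the argument is complete.
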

\begin{proof}
The proof is by induction on $\dim_\cc N$, with the base case being $\dim_\cc N = 2$.  Suppose that $N$ corresponds to the matrix $\left( \begin{array}{cc} \psi & \alpha \\ 0 & \psi \end{array} \right)$ with basis $\{ v_1, v_2 \}$, and write $V_1 = U(\g) v_1$. Then by Proposition \ref{cor:SimpleL_PsiXi}, $V_1 \cong L(\psi, \xi)$.

If $w \in {\rm Wh}_\psi (V_N)$, then $w + V_1 \in {\rm Wh}_\psi (V_N / V_1 )$ and so $w + V_1 = c v_2 + V_1$ for some $c \in \cc$ by Proposition 3.2 of \cite{OW2008} and the simplicity of $V_N/V_1$.  Write $cv_2 = w + x$ for some $x \in V_1$.  Then $(d_i - \psi_i)cv_2 = (d_i - \psi_i)(w+x)$, which implies
\begin{align}\label{eqn:indSupDiagComp1}
c \alpha_i v_1 = (d_i - \psi_i) x \qquad \text{for all $i>0$}.
\end{align}
By Lemma \ref{lem:computationInSimple}, $x = c'v_1 + c'' d_0 v_1$ for $c', c'' \in \cc$, so (\ref{eqn:indSupDiagComp1}) becomes 
$$c \alpha_i v_1 = c'' i \psi_i v_1 \qquad \text{for all $i>0$}.$$
If $c \neq 0$, then $\alpha = \frac{c''}{c} i \psi \in \cc i \psi$; this contradicts our assumption on $\alpha$.  Therefore it must be that $c = 0$, and it follows that $w \in V_1 \cap {\rm Wh}_\psi (V_N) = \cc v_1$ as desired.
  
Now assume that $n = \dim_\cc N> 2$, and let $\{ v_1, \ldots, v_n \}$ denote the standard basis for $N$.  Our goal is to show that ${\rm Wh}_\psi (V_N) = \cc v_1$.  By induction, we may suppose that the result holds for any module of the form $V_M$ where $\dim_\cc M < n$.  Let $V_{n-1}$ denote the submodule generated by the vectors $v_1, \ldots, v_{n-1}$.  
By induction, we have ${\rm Wh}_\psi (V_{n-1}) = \cc v_1$.

Suppose that $w \in {\rm Wh}_\psi (V_N)$ with $w \not\in \cc v_1$, and let $W = U(\g)w$.  Since $W$ is a simple Whittaker module and $w \not\in \cc v_1 = {\rm Wh}_\psi (V_{n-1})$, it follows that $W \cap V_{n-1} = \{ 0 \}$ and thus $V_N = V_{n-1} \oplus W$.  Write $v_n \in N$ as $v_n = v+w'$ with $v \in V_{n-1}$ and $w' \in W$.  We know that $(d_i - \psi_i)v_n = \sum_{j=1}^{n-1} \alpha(j,n)_i v_j \in M \subseteq V_{n-1}$.  Thus we have 
$$\sum_{j=1}^{n-1} \alpha(j,n)_i v_j = (d_i - \psi_i) (v+w') = (d_i - \psi_i)v + (d_i - \psi_i)w'.$$
Since $(d_i - \psi_i)w' \in W$ and $(d_i - \psi_i)v_n = \sum_{j=1}^{n-1} \alpha(j,n)_i v_j \in V_{n-1}$, the directness of $V_N = V_{n-1} \oplus W$ implies that $(d_i - \psi_i)w' = 0$.  Consequently, we have $v \in V_{n-1}$ with 
\begin{equation}\label{eqn:actionProject_v_n}
(d_i - \psi_i)v = \sum_{j=1}^{n-1} \alpha(j,n)_i v_j.
\end{equation}
Let $- : V_N \to V_N / V_{n-2}$ denote the natural homomorphism.  If we apply $-$ to (\ref{eqn:actionProject_v_n}), we obtain 
$$(d_i - \psi_i) \overline{v} = \sum_{j=1}^{n-1} \alpha(j,n)_i \overline{v_j} = \alpha(n-1,n)_i \overline{v_{n-1}}.$$
However, $V_N / V_{n-2} \cong V_{M'}$, where $M'$ is the 2-dimensional $\n^+$-module corresponding to the matrix $\left( \begin{array}{cc} \psi & \alpha(n-1,n) \\ 0 & \psi \end{array} \right)$.  The vector $\overline{v_{n-1}}$ is a Whittaker vector in the module $V_N / V_{n-1}$, so the submodule $U(\g) \overline{v_{n-1}}$ is simple.  Since (by assumption) $\alpha(n-1,n) \not\in \cc i \psi$, it follows that $\overline{v} \not\in U(\g) \overline{v_{n-1}}$.  But this is a contradiction since $v \in V_{n-1} = U(\g)v_{n-1} + \cdots + U(\g)v_1$.  It follows that there does not exist $w \in {\rm Wh}_\psi (V_N)$ with $w \not\in \cc v_1$.
\end{proof}

\begin{cor}
Assume $\psi_2 \neq 0$.  Let $N$ be as in (\ref{eqn:matrixOfFunctionals}) and $V_N$ as in (\ref{eqn:defV_N}).  Then $V_N$ is uniserial if and only if $\alpha (i, i+1) \not\in \cc i \psi$ for $1 \le i \le n-1$.
\end{cor}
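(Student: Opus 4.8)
The plan is to prove the two implications separately, in both cases exploiting the fact that an initial segment ${\rm span}_\cc\{v_1,\dots,v_j\}$ (resp. a quotient $N/{\rm span}_\cc\{v_1,\dots,v_j\}$) of the flag (\ref{eqn:finitePsiFlag}) is again a $(\cc z\oplus\n^+)$-module of the type admissible in (\ref{eqn:defV_N}), and that, since $U(\g)\cong U(\n^-\oplus\cc d_0)\otimes U(\cc z\oplus\n^+)$ makes induction from $\cc z\oplus\n^+$ exact, the associated induced module is the corresponding submodule (resp. quotient) of $V_N$; its defining matrix is the corresponding diagonal block of (\ref{eqn:matrixOfFunctionals}). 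I will also use the standard facts that, for a finite-length module, uniseriality is equivalent to the submodule lattice being a chain, and that subquotients of uniserial modules are uniserial.

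\textbf{The condition implies uniseriality.} Assume $\alpha(i,i+1)\not\in\cc i\psi$ for $1\le i\le n-1$, and induct on $n=\dim_\cc N$; when $n=1$, $V_N\cong L(\psi,\xi)$ is simple by Proposition \ref{cor:SimpleL_PsiXi}. For $n>1$, Theorem \ref{thm:indecompSupDiag} gives ${\rm Wh}_\psi(V_N)=\cc v_1$, so by Theorem \ref{thm:Wdecomp}(iv) every nonzero submodule of $V_N$ contains $v_1$, hence contains $V_1=U(\g)v_1$. By the exactness remark above, $V_N/V_1\cong V_M$ where $M$ is the $(n-1)$-dimensional module whose defining matrix is the lower-right $(n-1)\times(n-1)$ block of (\ref{eqn:matrixOfFunctionals}); its superdiagonal entries are $\alpha(2,3),\dots,\alpha(n-1,n)$, which by assumption avoid $\cc i\psi$, so the inductive hypothesis applies and $V_N/V_1$ is uniserial. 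Now if $S,T$ are submodules of $V_N$, then either one of them is $0$, or both contain $V_1$ and the uniseriality of $V_N/V_1$ forces $S/V_1\subseteq T/V_1$ or the reverse; hence the submodule lattice of $V_N$ is a chain and $V_N$ is uniserial.

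\textbf{Uniseriality implies the condition.} Arguing contrapositively, suppose $\alpha(i,i+1)=c\cdot i\psi$ for some $i$ and some $c\in\cc$. By the exactness remark, $V_{i+1}/V_{i-1}$ (with $V_0:=0$) is isomorphic to $V_{M''}$, where $M''$ is the $2$-dimensional module with basis $u_1,u_2$ and matrix $\left(\begin{smallmatrix}\psi&\alpha(i,i+1)\\0&\psi\end{smallmatrix}\right)$. Since $(d_k-\psi_k)d_0u_1=k\psi_k u_1$ (a direct computation, cf. Lemma \ref{lem:d_0}) while $(d_k-\psi_k)u_2=ck\psi_k u_1$, the vector $u_2-c\,d_0u_1$ is a $\psi$-Whittaker vector of $V_{M''}$, and it is linearly independent from the Whittaker vector $u_1$ because $V_{M''}$ is a free $U(\n^-\oplus\cc d_0)$-module on $u_1,u_2$. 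Thus $\dim{\rm Wh}_\psi(V_{M''})\ge 2$; since $\ell(V_{M''})=2$ by Lemma \ref{lem:compLengthV_N} and $\dim{\rm Wh}_\psi(V_{M''})\le\ell(V_{M''})$ by Corollary \ref{cor:wVectsCompLength}, equality holds, so $V_{M''}$ is completely reducible by Proposition \ref{prop:recognizeIndec-CR}(2) and therefore not uniserial. As $V_{M''}$ is a subquotient of $V_N$, the module $V_N$ is not uniserial either.

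The only point that requires real care is the bookkeeping underlying the "exactness remark": one must verify that passing to the subspace spanned by an initial segment of $v_1,\dots,v_n$ (or to the quotient by such a span) really does yield the submodule $V_j$ (or the quotient $V_N/V_j$) as an induced module $V_{N'}$, with $N'$ having the expected block of (\ref{eqn:matrixOfFunctionals}) as defining matrix. This follows from the PBW decomposition $U(\g)\cong U(\n^-\oplus\cc d_0)\otimes U(\cc z\oplus\n^+)$ — which makes $U(\g)\otimes_{U(\cc z\oplus\n^+)}(-)$ exact — together with the simplicity of $L(\psi,\xi)$, used to see that the relevant induced maps are injective; everything else is a direct invocation of Theorems \ref{thm:Wdecomp} and \ref{thm:indecompSupDiag}, Corollary \ref{cor:wVectsCompLength}, and Proposition \ref{prop:recognizeIndec-CR}.
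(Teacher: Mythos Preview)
Your proof is correct and, for the forward direction, essentially identical to the paper's: both induct on $n$, invoke Theorem \ref{thm:indecompSupDiag} to see that every nonzero submodule contains $V_1$, and then reduce to the quotient $V_N/V_1\cong V_M$.

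For the converse the routes diverge slightly. The paper again inducts on $n$: assuming $V_N$ uniserial, it observes that the submodule $V_2$ and the quotient $V_N/V_1$ are uniserial, applies the $2$-dimensional base case to $V_2$ to get $\alpha(1,2)\notin\cc i\psi$, and applies the inductive hypothesis to $V_N/V_1$ for the remaining $\alpha(j,j+1)$. Your contrapositive is more direct: you jump straight to the offending index $i$, pass to the $2$-dimensional subquotient $V_{i+1}/V_{i-1}$, and exhibit the explicit Whittaker vector $u_2-c\,d_0u_1$ to force complete reducibility there. This avoids the induction altogether and has the side benefit of actually writing down the computation behind the biconditional ``$\dim_\cc{\rm Wh}_\psi(V_N)=1\iff\alpha\notin\cc i\psi$'' in the $2$-dimensional case, which the paper invokes as already seen but only establishes in one direction within the proof of Theorem \ref{thm:indecompSupDiag}. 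Both arguments ultimately hinge on the same $2$-dimensional calculation together with exactness of induction from $U(\cc z\oplus\n^+)$.
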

\begin{proof}
We first suppose that $\alpha (i, i+1) \not\in \cc i \psi$ for $1 \le i \le n-1$.  The proof that $V_N$ is uniserial is by induction on $\dim_\cc N$, with the base case being $\dim_\cc N = 2$.  We have seen that if $N$ corresponds to the matrix $\left( \begin{array}{cc} \psi & \alpha \\ 0 & \psi \end{array} \right)$, then $\dim {\rm Wh}_\psi (V_N)=1$ if and only if $\alpha \not\in \cc i \psi$.  Since every submodule of $V_N$ must contain a Whittaker vector, there is only one submodule of composition length 1. 

Assume that $\dim_\cc N > 2$, and keep the notation $\alpha(1,2), \ldots, \alpha(n-1, n)$ from above.  Let $V_1 = U(\g) v_1$.  The submodule $V_1$ is simple and by Theorem \ref {thm:indecompSupDiag} contains ${\rm Wh}_\psi (V_N)$, every submodule of $V_N$ contains $V_1$.  Consequently the nontrivial submodules of $V_N$ correspond to the submodules of the quotient module $V_N / V_1$.  But $V_N / V_1$ is isomorphic to the module $V_M$, where $M$ corresponds to the matrix 
\begin{equation}\label{eqn:quotientMatrixFunctionals}
\left( \begin{array}{ccccc} \psi & \alpha(2,3) & \cdots & \alpha(2,n-1) & \alpha(2,n) \\ 0 & \psi & \cdots & \alpha(3,n-1) & \alpha(2,n)  \\ 0& 0 & \ddots & \vdots & \vdots \\ 0&0&0& \psi & \alpha(n-1,n) \\ 0&0&0&0& \psi \end{array} \right)
\end{equation}
By induction, $V_N/V_1$ is uniserial, and thus the same is true of $V_N$.

We now show the converse, with the proof again being by induction on $\dim_\cc N$.  The base case is given by $\dim_\cc (N) = 2$.  We have seen that if $N$ corresponds to the matrix $\left( \begin{array}{cc} \psi & \alpha \\ 0 & \psi \end{array} \right)$, then $\dim {\rm Wh}_\psi (V_N)=1$ if and only if $\alpha \not\in \cc i \psi$.  If $V_N$ is uniserial, it must be that $\dim_\cc {\rm Wh}_\psi (V_N) < 2$, and thus $\alpha \not\in \cc i \psi$.  

Now suppose that $\dim_\cc N > 2$, and let $V_1 = U(\g)v_1$ be the unique simple (Whittaker) submodule of $V_N$ and $V_2 = U(\g)v_1 + U(\g)v_2$.  The submodule $V_2$ is uniserial and corresponds to the matrix $\left( \begin{array}{cc} \psi & \alpha(1,2) \\ 0 & \psi \end{array} \right)$, so by induction $\alpha(1,2) \not\in \cc i \psi$.  Similarly, the quotient $V_N / V_1$ is uniserial and corresponds to the matrix (\ref{eqn:quotientMatrixFunctionals}).  Thus by induction $\alpha(2,3), \alpha(3,4), \ldots, \alpha(n-1,n) \not\in \cc i \psi$.  
\end{proof}

\begin{cor}\label{cor:howManyWhittakersV_N}
Assume $\psi_2 \neq 0$.  Let $N$ and $\{ \alpha(i,j) \mid 1 \le i \le n-1, 2 \le j \le n \}$ be as in (\ref{eqn:matrixOfFunctionals}) and assume $\dim_\cc N \ge 2$.  Then $\dim_\cc {\rm Wh}_\psi (V_N) \le | \mathcal W | +1$, where $\mathcal W = \{ \alpha(k, k+1) \mid \alpha(k,k+1) \in \cc i \psi \}$.
\end{cor}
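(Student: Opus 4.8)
The strategy is to induct on $\dim_\cc N$, peeling off the bottom simple submodule $V_1 = U(\g)v_1$ and passing to the quotient $V_N/V_1$, exactly as in the proof of the uniseriality corollary. First I would record the base cases: if $\dim_\cc N = 2$, then by the computation already carried out (in the proof of Theorem \ref{thm:indecompSupDiag} and the uniseriality corollary), $\dim_\cc {\rm Wh}_\psi(V_N)$ equals $1$ when $\alpha(1,2)\notin\cc\, i\psi$ and equals $2$ when $\alpha(1,2)\in\cc\,i\psi$ (in the latter case $V_N$ is a direct sum of two copies of $L(\psi,\xi)$, by Proposition \ref{prop:recognizeIndec-CR} together with Corollary \ref{cor:whittVectsDecomp}); in both cases the bound $|\mathcal W|+1$ is met. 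For the inductive step, set $V_1 = U(\g)v_1 \cong L(\psi,\xi)$, which is simple and contains $\cc v_1 \subseteq {\rm Wh}_\psi(V_N)$, and recall that $V_N/V_1 \cong V_M$ where $M$ is the $(n-1)$-dimensional module with matrix \eqref{eqn:quotientMatrixFunctionals}, whose superdiagonal entries are $\alpha(2,3),\ldots,\alpha(n-1,n)$; call $\mathcal W' = \{\alpha(k,k+1)\mid 2\le k\le n-1,\ \alpha(k,k+1)\in\cc\,i\psi\}$, so that $|\mathcal W'| = |\mathcal W|$ or $|\mathcal W| - 1$ according to whether $\alpha(1,2)\in\cc\,i\psi$ or not.

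The key step is to bound $\dim_\cc {\rm Wh}_\psi(V_N)$ in terms of $\dim_\cc {\rm Wh}_\psi(V_N/V_1)$. Applying the natural map $-\colon V_N \to V_N/V_1$, any Whittaker vector of $V_N$ maps to a Whittaker vector (possibly zero) of $V_N/V_1$, giving a $\cc$-linear map ${\rm Wh}_\psi(V_N) \to {\rm Wh}_\psi(V_N/V_1)$ whose kernel is ${\rm Wh}_\psi(V_N)\cap V_1 = \cc v_1$ (since $V_1\cong L(\psi,\xi)$ has a $1$-dimensional Whittaker space by Corollary \ref{cor:wVectsCompLength} or the remark on simple Whittaker modules). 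Hence $\dim_\cc {\rm Wh}_\psi(V_N) \le 1 + \dim_\cc {\rm Wh}_\psi(V_N/V_1)$. By the inductive hypothesis applied to $V_M = V_N/V_1$ (which has dimension $n-1 \ge 2$), we get $\dim_\cc {\rm Wh}_\psi(V_N/V_1) \le |\mathcal W'| + 1$. Combining, $\dim_\cc {\rm Wh}_\psi(V_N) \le |\mathcal W'| + 2$. If $\alpha(1,2)\notin\cc\,i\psi$, then $|\mathcal W'| = |\mathcal W| - 1$, and we are done: the bound is $|\mathcal W|+1$.

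The remaining — and more delicate — case is $\alpha(1,2)\in\cc\,i\psi$, where $|\mathcal W'| = |\mathcal W|$ and the naive estimate only gives $|\mathcal W| + 2$, one too many. Here I expect the main obstacle to lie, and the fix should be to show that the linear map ${\rm Wh}_\psi(V_N)\to {\rm Wh}_\psi(V_N/V_1)$ is not surjective in this situation, i.e.\ the Whittaker vector $\overline{v_2}\in V_N/V_1$ (which, since $\alpha(1,2)\in\cc\,i\psi$, becomes a genuine Whittaker vector there — this is precisely the base-case computation run in reverse) does not lift to a Whittaker vector of $V_N$. Indeed, if $w\in{\rm Wh}_\psi(V_N)$ projected to a nonzero multiple of $\overline{v_2}$, then some combination $w - c v_1$ would equal $v_2$ modulo the requirement $(d_i-\psi_i)v_2 = \alpha(1,2)_i v_1$, forcing $(d_i-\psi_i)(\text{something in }\cc v_1 \oplus \cc d_0 v_1)$ to reproduce $\alpha(1,2)$, and by Lemma \ref{lem:computationInSimple} this would again force $\alpha(1,2)\in\cc\,i\psi$ \emph{with} a compatible $d_0$-coefficient — but one must check the lift actually exists inside $V_N$ rather than just $V_2$, which fails because $v_2$ is \emph{not} a Whittaker vector of $V_N$ by construction when we started with a nonzero $\alpha(1,2)$ entry; more carefully, one argues that a Whittaker lift of $\overline{v_2}$ would make $V_1$ a direct summand, and then re-examines the flag on $N$ to contradict $\alpha(2,3)\notin\cc\,i\psi$ or the shape of the matrix. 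Thus the map ${\rm Wh}_\psi(V_N)\to{\rm Wh}_\psi(V_N/V_1)$ has image of dimension at most $\dim_\cc{\rm Wh}_\psi(V_N/V_1) - 1 \le |\mathcal W'|$, so $\dim_\cc{\rm Wh}_\psi(V_N) \le |\mathcal W'| + 1 = |\mathcal W| + 1$, completing the induction. I would want to double-check this last surjectivity-failure argument against the possibility that $n=2$ is hidden inside the induction or that $\alpha(1,2)$ could be zero, handling those edge cases separately.
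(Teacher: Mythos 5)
There is a genuine gap, and it starts with a bookkeeping error that reverses your two cases. With $\mathcal W' = \{ \alpha(k,k+1) \mid 2 \le k \le n-1,\ \alpha(k,k+1) \in \cc i \psi \}$, you have the dichotomy backwards: if $\alpha(1,2) \in \cc i \psi$ then $\alpha(1,2)$ lies in $\mathcal W$ but not in $\mathcal W'$, so $|\mathcal W'| = |\mathcal W| - 1$ and your naive estimate $\dim_\cc {\rm Wh}_\psi (V_N) \le 1 + |\mathcal W'| + 1$ already gives $|\mathcal W| + 1$; that is the \emph{easy} case. If instead $\alpha(1,2) \notin \cc i \psi$, then $\mathcal W' = \mathcal W$ and the naive estimate only gives $|\mathcal W| + 2$; that is the case that actually needs work. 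Your ``delicate case'' analysis is therefore aimed at the wrong situation, and the non-surjectivity argument you sketch for it (that $\overline{v_2}$ fails to lift, ``making $V_1$ a direct summand,'' etc.) is not carried out and, as written, does not establish anything: in the genuinely hard case one must show that the restriction map ${\rm Wh}_\psi (V_N) \to {\rm Wh}_\psi (V_N / V_1)$ fails to be surjective whenever $\alpha(1,2) \notin \cc i \psi$, which is essentially a relative version of Theorem \ref{thm:indecompSupDiag} and is the entire content of the corollary. So the proof is incomplete precisely where the work lies.

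The paper closes this gap by cutting at a better spot. First, if no $\alpha(j,j+1)$ lies in $\cc i \psi$, Theorem \ref{thm:indecompSupDiag} gives $\dim_\cc {\rm Wh}_\psi (V_N) = 1$ and there is nothing to prove. Otherwise take $k$ \emph{minimal} with $\alpha(k,k+1) \in \cc i \psi$ and set $V_k = U(\g)v_1 + \cdots + U(\g)v_k$. By minimality, all superdiagonal entries of $V_k$ avoid $\cc i \psi$, so Theorem \ref{thm:indecompSupDiag} yields $\dim_\cc {\rm Wh}_\psi (V_k) = 1$; meanwhile the quotient $V_N / V_k$ corresponds to the matrix with superdiagonal $\alpha(k+1,k+2), \ldots, \alpha(n-1,n)$, whose associated set is $\mathcal W \setminus \{ \alpha(k,k+1) \}$, so induction bounds its Whittaker space by $|\mathcal W|$. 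Extending $\{v_1\}$ to a basis of ${\rm Wh}_\psi(V_N)$ and projecting, the images of the remaining basis vectors stay linearly independent (any relation would land in ${\rm Wh}_\psi(V_k) = \cc v_1$), giving $\dim_\cc {\rm Wh}_\psi (V_N) - 1 \le |\mathcal W|$. If you want to keep your one-step-at-a-time induction, you would have to prove the non-surjectivity statement above in the case $\alpha(1,2) \notin \cc i \psi$; choosing the minimal $k$ lets you avoid that entirely.
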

\begin{proof}
The proof is by induction on $\dim_\cc N$.  The result is clear if $\dim_\cc N = 2$.  Let $\{ v_1, \ldots, v_n \}$ be the basis for $N$ corresponding to the $\alpha(i,j)$.  Then $v_1$ is a Whittaker vector, so we may extend $\{ v_1 \}$ to a basis $\mathcal B$ for ${\rm Wh}_\psi (V_N)$.  Write $\mathcal B = \{ w_1, \ldots, w_m \}$, where $w_1 = v_1$.  

Note that if $\alpha(j,j+1) \not\in \cc i \psi$ for all $j$, then the result follows from Theorem \ref{thm:indecompSupDiag}.  Thus we assume that some $\alpha(j,j+1)$ belongs to $\cc i \psi$.  Let $k$ be minimal such that $\alpha(k,k+1) \in \cc i \psi$, and let $V_k = U(\g) v_1 + \cdots + U(\g) v_k$.  By Theorem \ref{thm:indecompSupDiag}, $\dim_\cc {\rm Wh}_\psi (V_k) = 1$, and the quotient $\overline{V_N} = V_N / V_k$ corresponds to the matrix 
$$\left( \begin{array}{ccccc} \psi & \alpha(k+1,k+2) & \cdots & \alpha(k+1,n-1) & \alpha(k+1,n) \\ 0 & \psi & \cdots & \alpha(k+2,n-1) & \alpha(k+2,n)  \\ 0& 0 & \ddots & \vdots & \vdots \\ 0&0&0& \psi & \alpha(n-1,n) \\ 0&0&0&0& \psi \end{array} \right)$$
By induction, 
$$\dim_\cc {\rm Wh}_\psi (V_N / V_k) \le | \mathcal{W} \setminus \{ \alpha(k,k+1) \} | +1.$$
But since $\dim_\cc {\rm Wh}_\psi (V_k) = 1$, the set $\{ \overline{w_2}, \ldots, \overline{w_m} \} \subseteq {\rm Wh}_\psi (V_N / V_k)$ is linearly independent, and thus we have 
$$m-1 \le \dim_\cc {\rm Wh}_\psi (V_N / V_k) \le | \mathcal{W} \setminus \{ \alpha(k,k+1) \} | +1 = | \mathcal W | - 1 +1 = | \mathcal W |.$$
The result now follows since $m = \dim_\cc {\rm Wh}_\psi (V_N)$ and $m \le | \mathcal W | + 1$.
\end{proof}

\bigskip
\subsubsection{Structure of $V_N$ when $\psi_2 = 0$ and $\psi_1 \neq 0$}\label{subsec:V_N-psi_2=0}
This section presents partial descriptions of $V_N$ when $\psi_2 =0$ and $\psi_1 \neq 0$.  These results suggest that the structure of $W_f(\psi)$ is significantly different than when $\psi_2 \neq 0$.  We begin by defining some notation that will be useful in this setting.  If $\psi : \n^+ \to \cc$ is an algebra homomorphism, we define an algebra homomorphism 
$$\tilde \psi : \n^+ \to \cc$$
by
$$\text{$\tilde \psi (d_1) = \psi_1, \quad \tilde \psi (d_2) = -3 \psi_1^2$, \quad and \; $\tilde \psi (d_k) = 0$ for $k \ge 3$.}$$
Note that $(d_i - \psi_i)v_1 = \tilde \psi_i w$ and $(d_i - \psi_i)v_2 = i \psi_i w$ for $i>0$.  Note that $\{ \tilde \psi, i \psi \}$ is a linearly independent subset of ${\rm Hom}_{Alg}(\n^+, \cc)$ since $\tilde \psi_2 = -3 \psi_1^2 \neq 0$ and $2 \psi_2 = 0$.  Since ${\rm Hom}_{Alg}(\n^+, \cc)$ is 2-dimensional, it follows that $\{ \tilde \psi, i \psi \}$ spans ${\rm Hom}_{Alg}(\n^+, \cc)$.

\begin{lem}\label{lem:preWhittakerPsi2=0}
Let $L$ be a simple Whittaker module of type $\psi \neq 0$ with $w \in {\rm Wh}_\psi (L)$, and assume $\psi_2 = 0$.  If $v \in L$ with $(d_i - \psi_i)v \in \cc w$ for all $i>0$, then 
$$v \in \cc (d_0^2w - \psi_1 d_{-1}w) \oplus \cc d_0w \oplus \cc w.$$
\end{lem}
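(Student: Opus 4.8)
The plan is to mimic the structure of the proof of Lemma \ref{lem:computationInSimple}, where the case $\psi_2 \neq 0$ forces $v \in \cc w \oplus \cc d_0 w$; here the weaker hypothesis $\psi_2 = 0$ should loosen this to a three-dimensional space. Using the realization of $L \cong L(\psi,\xi)$ from Proposition \ref{cor:SimpleL_PsiXi} and the notation of Section \ref{subsec:simpObjW}, I would write $v = \sum_{\gamma \in \p,\, j \geq 0} a_{\gamma,j}\, d_{-\gamma} d_0^j w$ with finitely many nonzero coefficients, and set $maxdeg(v) = \max\{ |\gamma| \mid a_{\gamma,j} \neq 0 \text{ for some } j\}$. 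The first goal is to bound $maxdeg(v)$; since $\psi_2 = 0$ but $\psi_1 \neq 0$, bracketing with $d_1$ (rather than $d_2$) is the tool that sees the top-degree part, because $2 = \max\{i \mid \psi_i \neq 0\}$ is no longer available. I expect to show $maxdeg(v) \leq 1$: a top-degree term $d_{-\gamma} d_0^j w$ with $|\gamma| = N \geq 2$ produces, under $(d_1 - \psi_1)$, terms of $\gamma$-degree $N-1$ and (from $[d_1,d_0^j]$, via Lemma \ref{lem:d_0}) a term $d_{-\gamma} d_0^{j-1} w$ of $\gamma$-degree $N$; choosing the top term with $j$ maximal shows such a leftover term cannot be cancelled and is not in $\cc w$, contradicting the hypothesis — unless it happens that no genuinely new $\gamma$ of degree $N$ survives, which forces $N \leq 1$ after also examining the interplay of $d_{-1}$ with the center (the $\delta_{j,-k}\frac{k^3-k}{12}z$ term, which is why $d_{-1}$ is special: $[d_1,d_{-1}] = 2d_0$, with no central contribution).

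Once $maxdeg(v) \leq 1$, write $v = p(d_0)w + q(d_0)\, d_{-1} w$ for polynomials $p, q$ (using that the only partition of size $\leq 1$ is $\emptyset$ or $(1)$), and impose $(d_i - \psi_i)v \in \cc w$ for each $i > 0$. For $i \geq 3$ we have $[d_i, d_{-1}] = d_{i-1}$ acting on $w$ as $\psi_{i-1} w$ which is $0$ for $i \geq 4$ and $\psi_2 w = 0$ for $i=3$; together with Lemma \ref{lem:d_0} this should already force $\deg p \leq 2$ and $\deg q \leq 0$. Then the case $i = 2$: $[d_2, d_0^k]w = \psi_2((d_0+2)^k - d_0^k)w = 0$ and $[d_2, d_{-1}] = 3 d_1$ acting as $3\psi_1 w$, so $(d_2 - \psi_2)v = 3\psi_1 q\, w \in \cc w$ automatically — no constraint beyond $\deg q \leq 0$. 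The decisive constraint is $i = 1$: using $[d_1, d_0^k]w = \psi_1((d_0+1)^k - d_0^k)w$ and $[d_1, d_{-1}] = 2 d_0$, compute $(d_1 - \psi_1)v$ and demand it lie in $\cc w$; writing $p(d_0) = a d_0^2 + b d_0 + c$ and $q = q_0$ constant, the coefficient of $d_0 w$ must vanish, which pins down $q_0 = -\psi_1 a$ (matching the stated generator $d_0^2 w - \psi_1 d_{-1} w$), while $a, b, c$ remain free. This yields exactly $v \in \cc(d_0^2 w - \psi_1 d_{-1} w) \oplus \cc d_0 w \oplus \cc w$.

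The main obstacle I anticipate is the degree-reduction step: showing cleanly that $maxdeg(v) \leq 1$ rather than merely $\leq 2$ or worse, and in particular handling the partition $(1,1,\dots)$-type top terms where repeatedly bracketing with $d_1$ can lower the number of parts without lowering $|\gamma|$ enough, and where the central term in $[d_1, d_{-1}]$ vanishes (so $d_{-1}$-towers behave differently from $d_{-k}$-towers for $k \geq 2$). I would lean on Lemma \ref{Lem:wj} and Lemma \ref{lem:d_0} to track precisely which basis vectors $d_{-\gamma} d_0^j w$ appear and with what nonzero coefficients after applying $(d_1 - \psi_1)$, mirroring the bookkeeping already done in Lemma \ref{lem:computationInSimple}, and invoke the argument from the proof of \cite[Proposition 3.1]{OW2008} at the point where one needs a bracket $d_{\lambda_1 + 1}$ (now with the shift by $1$ since $\max\{i : \psi_i \neq 0\} = 1$) to act nontrivially on a genuine top-degree term. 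Everything else is a finite linear-algebra computation in low degree.
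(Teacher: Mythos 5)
Your endgame computation (once $v = p(d_0)w + q_0 d_{-1}w$ with $\deg p \le 2$, the condition for $i=1$ pins down $q_0 = -a\psi_1$ while $i=2$ and $i\ge 3$ give nothing new) is correct, but the proposal has a genuine gap exactly where you flag it: the degree-reduction step is not carried out, and it does not transfer routinely from Lemma \ref{lem:computationInSimple}. That proof uses two special features of $\psi_2 \neq 0$: the bracket $[d_2,d_0^j]$ contributes $2j\psi_2 d_0^{j-1}+\cdots$, which kills positive powers of $d_0$ in top-degree terms, and since $(d_i-\psi_i)v\in\cc w$ forces $[\n^+,\n^+]v=0$, one gets $d_{\lambda_1+2}v=0$ outright (because $\lambda_1+2\ge 3$), contradicting the nonvanishing statement from \cite[Prop.~3.1]{OW2008}. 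When $\psi_2=0$ and the top-degree partition has $\lambda_1=1$ (terms such as $d_{-1}^2w$ or $d_{-1}d_0^jw$), your analogous operator is $d_{\lambda_1+1}=d_2$, and $(d_2-\psi_2)v=d_2v$ is \emph{permitted} by hypothesis to be a nonzero multiple of $w$; so you must show the output lands outside $\cc w$ and rule out cancellations among the images of $d_{-1}^2w$, $d_{-1}d_0^jw$, $d_0^kw$ under $(d_1-\psi_1)$ --- none of which is done. As written, the argument only becomes a proof after substantial additional bookkeeping whose feasibility is asserted rather than demonstrated.

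The paper takes a different and much shorter route that avoids the PBW expansion entirely. Writing $(d_i-\psi_i)v=\mu_iw$, the hypothesis forces $[\n^+,\n^+]v=0$, so $\mu$ lies in the 2-dimensional space ${\rm Hom}_{Alg}(\n^+,\cc)={\rm span}_\cc\{\tilde\psi,\,i\psi\}$ (here $\psi_2=0\neq\psi_1$ is what makes $\{\tilde\psi,i\psi\}$ a basis). The explicit vectors $v_1=d_0^2w-\psi_1d_{-1}w$ and $v_2=d_0w$ satisfy $(d_i-\psi_i)v_1=\tilde\psi_iw$ and $(d_i-\psi_i)v_2=i\psi_iw$, so writing $\mu=m\tilde\psi+n\,i\psi$, the vector $v-mv_1-nv_2$ is a Whittaker vector and hence a scalar multiple of $w$. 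If you want to salvage your approach, the cleanest fix is to replace the degree-reduction step with exactly this observation.
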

\begin{proof}
Let $v_1 = d_0^2w - \psi_1 d_{-1}w$ and $v_2 = d_0w$.  
If $(d_i - \psi_i)v \in \cc w$ for $i>0$, we may write $(d_i - \psi_i)v = \mu_i w$, where $\mu \in {\rm Hom}_{Alg}(\n^+, \cc) = {\rm span}_\cc \{ \tilde \psi, i \psi \}$.  Consequently we have $\mu = m \tilde \psi + n i \psi$, for $m,n \in \cc$, and it follows that the vector $w' = v - mv_1 - nv_2$ is a Whittaker vector.  This forces $w'  = pw$ for some $p \in \cc$, and so $v = mv_1 + nv_2 + pw$, as desired.
\end{proof}

\begin{prop}\label{lem:psi_2=0_length2}
Let $\psi : \n^+ \to \cc$ be a nonzero algebra homomorphism with $\psi_2 = 0$.  Let $N$ be the 2-dimensional $\n^+$-module corresponding to the matrix $\left( \begin{array}{cc} \psi & \alpha  \\ 0 & \psi \end{array} \right)$.  Then $V_N$ is completely reducible. 
\end{prop}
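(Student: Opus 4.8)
The plan is to prove directly that $\dim_\cc {\rm Wh}_\psi(V_N) = 2$, and then conclude via Proposition \ref{prop:recognizeIndec-CR}(2) that $V_N$ is completely reducible. Write $N = \cc v_1 \oplus \cc v_2$ for the standard basis of (\ref{eqn:finitePsiFlag}), so that $(d_i - \psi_i)v_1 = 0$ and $(d_i - \psi_i)v_2 = \alpha_i v_1$ for all $i > 0$, and put $V_1 = U(\g)v_1$. Since $v_1$ is a $\psi$-Whittaker vector on which $z$ acts by $\xi$, Proposition \ref{cor:SimpleL_PsiXi} gives $V_1 \cong L(\psi,\xi)$; in particular $V_1$ is simple. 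By Lemma \ref{lem:compLengthV_N}, $\ell(V_N) = \dim_\cc N = 2$, and Corollary \ref{cor:wVectsCompLength} gives $\dim_\cc {\rm Wh}_\psi(V_N) \le \ell(V_N) = 2$. Since $v_1 \in {\rm Wh}_\psi(V_N)$, it therefore suffices to exhibit one $\psi$-Whittaker vector of $V_N$ that is not a scalar multiple of $v_1$.

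I would search for such a vector in the form $w' = v_2 - u$ with $u \in V_1$; the condition $(d_i - \psi_i)w' = 0$ for all $i > 0$ is then exactly $(d_i - \psi_i)u = \alpha_i v_1$ for all $i > 0$. To build $u$, recall (Lemma \ref{lem:preWhittakerPsi2=0} and the computation in its proof, using $\psi_2 = 0$) that inside $V_1 \cong L(\psi,\xi)$, with cyclic Whittaker vector $v_1$, one has
$$(d_i - \psi_i)\bigl(d_0^2 v_1 - \psi_1 d_{-1}v_1\bigr) = \tilde\psi_i\,v_1 \qquad\text{and}\qquad (d_i - \psi_i)(d_0 v_1) = i\psi_i\,v_1$$
for all $i > 0$, and that $\{\tilde\psi, i\psi\}$ is a basis of the two-dimensional space ${\rm Hom}_{Alg}(\n^+,\cc) = (\n^+/[\n^+,\n^+])^*$. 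Because $N$ is a genuine $\n^+$-module, compatibility of its action with the bracket forces $\alpha$ to annihilate $[\n^+,\n^+]$, hence $\alpha \in (\n^+/[\n^+,\n^+])^*$, so we may write $\alpha = m\tilde\psi + n\,i\psi$ for unique $m,n \in \cc$. Setting $u := m(d_0^2 v_1 - \psi_1 d_{-1}v_1) + n\,d_0 v_1 \in V_1$ we then get $(d_i - \psi_i)u = (m\tilde\psi_i + n\,i\psi_i)v_1 = \alpha_i v_1$, so $w' = v_2 - u$ is a $\psi$-Whittaker vector. Moreover, since $V_N$ is free as a left $U(\n^- \oplus \cc d_0)$-module on $1\otimes v_1, 1\otimes v_2$ and $V_1 = U(\n^- \oplus \cc d_0)v_1$ is precisely the first summand, the vector $w' = v_2 - u$ has a nonzero $v_2$-component and so lies outside $V_1 \supseteq \cc v_1$.

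This gives $\dim_\cc {\rm Wh}_\psi(V_N) \ge 2$, hence equality with $\ell(V_N) = 2$, and Proposition \ref{prop:recognizeIndec-CR}(2) yields that $V_N$ is completely reducible; equivalently, $W := U(\g)w' \cong L(\psi,\xi)$ is a simple submodule meeting the simple submodule $V_1$ trivially (otherwise $v_1 \in W$ would force $v_1 \in {\rm Wh}_\psi(W) = \cc w'$), so $V_N = V_1 \oplus W$. I do not expect a real obstacle here: the serious work --- identifying the solutions of $(d_i-\psi_i)v \in \cc w$ inside a simple Whittaker module and pinning down ${\rm Hom}_{Alg}(\n^+,\cc)$ --- has already been done in Lemma \ref{lem:preWhittakerPsi2=0} and the discussion preceding it. The only point that warrants a moment's care is the observation that $\alpha$ lies in $(\n^+/[\n^+,\n^+])^*$, and this is exactly what makes the $\psi_2 = 0$ case genuinely different from the $\psi_2 \neq 0$ case of Theorem \ref{thm:indecompSupDiag}, where only the single functional $i\psi$ is available to absorb $\alpha$.
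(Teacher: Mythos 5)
Your proof is correct and follows essentially the same route as the paper's: decompose $\alpha$ in the basis $\{\tilde\psi, i\psi\}$ of ${\rm Hom}_{Alg}(\n^+,\cc)$, correct $v_2$ by the corresponding combination of $d_0^2v_1 - \psi_1 d_{-1}v_1$ and $d_0v_1$ to produce a second Whittaker vector, and conclude via Lemma \ref{lem:compLengthV_N} and Proposition \ref{prop:recognizeIndec-CR}(2). The extra points you spell out (that $\alpha$ factors through $\n^+/[\n^+,\n^+]$, and that $w'$ lies outside $V_1$ by freeness over $U(\n^-\oplus\cc d_0)$) are left implicit in the paper but are consistent with its argument.
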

\begin{proof}
Let $\{ v_1, v_2 \}$ be the basis of $V_N$ corresponding to the matrix $\left( \begin{array}{cc} \psi & \alpha  \\ 0 & \psi \end{array} \right)$.  Since ${\rm span}_\cc \{ i \psi, \tilde \psi \} = {\rm Hom}_{Alg}(\n^+, \cc)$, we may write $\alpha = c_0 \tilde \psi + c_1 i \psi \in \cc \tilde \psi + \cc i \psi$.  Let $w_1 = v_1$ and 
$$w_2 = v_2 - c_0 \left( d_0^2v_1 - \psi_1 d_{-1}v_1 \right) - c_1 d_0 v_1.$$  
It is straightforward to show that $(d_i - \psi_i)w_2 = 0$ for all $i>0$, and thus $\dim_\cc {\rm Wh}_\psi (V_N) = 2$. The result then follows from Proposition \ref{prop:recognizeIndec-CR} and Lemma \ref{lem:compLengthV_N}.
\end{proof}

\begin{cor}\label{cor:V_N_ifPsi=0}
Let $\psi : \n^+ \to \cc$ be a nonzero algebra homomorphism with $\psi_2 = 0$, and let $N$ be as in (\ref{eqn:matrixOfFunctionals}).  Then $\dim {\rm Wh}_\psi (V_N) > 1$ and $V_N$ is not uniserial.
\end{cor}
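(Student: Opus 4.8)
The plan is to reduce the general case to the $2$-dimensional situation already handled in Proposition \ref{lem:psi_2=0_length2}, using the fact that $V_2 = U(\g)v_1 + U(\g)v_2$ is a submodule of $V_N$. First I would observe that $N$ as in (\ref{eqn:matrixOfFunctionals}) has dimension $n \ge 2$, and that the subspace $M' = {\rm span}_\cc\{v_1,v_2\}$ is an $(\cc z \oplus \n^+)$-submodule of $N$ corresponding to the $2\times 2$ matrix $\left( \begin{array}{cc} \psi & \alpha(1,2) \\ 0 & \psi \end{array} \right)$. Since $V_N$ is a free left $U(\n^- \oplus \cc d_0)$-module on $1\otimes v_1,\dots,1\otimes v_n$, the submodule $V_2 = U(\g)\,\mathrm{span}_\cc\{v_1,v_2\}$ is naturally isomorphic to $V_{M'}$; indeed the inclusion $M' \hookrightarrow N$ induces a $\g$-map $V_{M'} \to V_N$ which is injective because it sends the free $U(\n^-\oplus\cc d_0)$-basis $1\otimes v_1, 1\otimes v_2$ to part of a free basis of $V_N$.

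Next, by Proposition \ref{lem:psi_2=0_length2}, $V_{M'}$ is completely reducible of length $2$, so $\dim_\cc {\rm Wh}_\psi(V_{M'}) = 2$; in particular $V_2 \cong V_{M'}$ contains a Whittaker vector $w_2$ not in $\cc v_1$. Since ${\rm Wh}_\psi(V_2) \subseteq {\rm Wh}_\psi(V_N)$, this already gives $\dim_\cc {\rm Wh}_\psi(V_N) \ge 2 > 1$. For the second assertion, note that a uniserial module has a unique simple submodule; but $V_2$, being completely reducible of length $2$, contains two distinct simple submodules (e.g. $U(\g)v_1$ and $U(\g)w_2$), hence $V_N$ has at least two distinct simple submodules and cannot be uniserial. (Alternatively: a uniserial module of finite length $\ell$ has a one-dimensional space of Whittaker vectors by Proposition \ref{prop:recognizeIndec-CR}(1) applied to $V_N$ itself — every submodule of a uniserial module is uniserial, hence indecomposable — contradicting $\dim_\cc {\rm Wh}_\psi(V_N) \ge 2$.)

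The only point requiring care — and the one I would write out most carefully — is the identification $V_2 \cong V_{M'}$: one must check that the canonical map $U(\g)\otimes_{U(\cc z\oplus\n^+)} M' \to V_N$ is injective and has image exactly $U(\g)\,\mathrm{span}_\cc\{v_1,v_2\}$. This is immediate from the PBW-type decomposition $U(\g) \cong U(\n^-\oplus\cc d_0)\otimes_\cc U(\cc z\oplus\n^+)$ together with $U(\cc z\oplus\n^+)M' = M'$ (since $\n^+$ acts on $M'$ and $z$ acts by $\xi$), exactly as in the remark preceding Proposition \ref{lem:universalPropN}: $V_{M'}$ is free over $U(\n^-\oplus\cc d_0)$ on $1\otimes v_1, 1\otimes v_2$, and these map to two elements of the free $U(\n^-\oplus\cc d_0)$-basis of $V_N$. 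Everything else is a direct appeal to the already-established Proposition \ref{lem:psi_2=0_length2} and Proposition \ref{prop:recognizeIndec-CR}.
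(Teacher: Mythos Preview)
Your proof is correct and follows the same approach as the paper: the paper's proof is the single sentence ``Note that the submodule $V_2 = U(\g) v_1 + U(\g) v_2$ is completely reducible by Lemma \ref{lem:psi_2=0_length2},'' and you have simply supplied the details the paper leaves implicit (the identification $V_2 \cong V_{M'}$ via freeness over $U(\n^- \oplus \cc d_0)$, and why complete reducibility of $V_2$ yields both conclusions). Your added care about the injectivity of $V_{M'} \to V_N$ is a reasonable point to make explicit, though the paper treats it as obvious from the PBW freeness already noted before Proposition \ref{lem:universalPropN}.
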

\begin{proof}
Note that the submodule $V_2 = U(\g) v_1 + U(\g) v_2$ is completely reducible by Lemma \ref{lem:psi_2=0_length2}.
\end{proof}

\bigskip

\end{document}